\providecommand{\tabularnewline}{\\}
\def\RSsubtxt{section~}\newref{sub}{name = \RSsubtxt}}
\def\RSthmtxt{theorem~}\newref{thm}{name = \RSthmtxt}}
\def\RSlemtxt{lemma~}\newref{lem}{name = \RSlemtxt}}
\numberwithin{equation}{section}
\numberwithin{figure}{section}
\theoremstyle{plain}
\newtheorem{thm}{\protect\theoremname}[section]
  \theoremstyle{definition}
  \newtheorem{defn}[thm]{\protect\definitionname}
  \theoremstyle{plain}
  \newtheorem{cor}[thm]{\protect\corollaryname}
  \theoremstyle{plain}
  \newtheorem*{thm*}{\protect\theoremname}
  \theoremstyle{plain}
  \newtheorem{prop}[thm]{\protect\propositionname}
  \theoremstyle{remark}
  \newtheorem{claim}[thm]{\protect\claimname}
  \theoremstyle{remark}
  \newtheorem{rem}[thm]{\protect\remarkname}
  \theoremstyle{plain}
  \newtheorem{lem}[thm]{\protect\lemmaname}
\newcommand{\FigBesBeg}[1][1.0]{%
 \let\MyFigure\figure
 \let\MyEndfigure\endfigure
 \renewenvironment{figure}[1]{\begin{SCfigure}[#1]##1}{\end{SCfigure}}}
\newcommand{\FigBesEnd}{%
 \let\figure\MyFigure
 \let\endfigure\MyEndfigure}
\newcommand{\F}{\mathbf{F}}
\newcommand{\hF}{{\widehat{\F}_k}}
\renewcommand{\G}{\Gamma}
\newcommand{\ff}{\mathrel{\smash{\scalebox{1}[0.7]{\ensuremath{\stackrel{*}{\le}}}}}}
\newcommand{\pff}{\mathrel{\smash{\scalebox{1}[0.7]{\ensuremath{\stackrel{*}{\lneqq}}}}}}
\newcommand{\fg}{\le_{f\!g}}
\newcommand{\D}{\mathcal{D}}
\newcommand{\la}{\langle}
\newcommand{\ra}{\rangle}
\renewcommand{\th}{\twoheadrightarrow}
\newcommand{\rp}{\widetilde{\pi}}
\newcommand{\alg}{\le_{alg}}
\newcommand{\algne}{\lneqq_{alg}}
\newcommand{\cJ}{\widehat{\Gamma}_{X}\left(J\right)}
\newcommand{\Xcov}{\scriptscriptstyle{\stackrel{\twoheadrightarrow}{X}}}
\newcommand{\covers}{\mathrel{\leq_{\smash{\scalebox{0.9}[0.8]{\ensuremath{\Xcov}}}}}}
\newcommand{\ncovers}{\mathrel{\lneqq_{\smash{\scalebox{0.9}[0.8]{\ensuremath{\Xcov}}}}}}
\newcommand{\XC}[2]{\left[#1,#2\right]_{\Xcov}}
\newcommand{\XCO}[2]{\left[#1,#2\right)_{\Xcov}}
\newcommand{\XF}[1]{\XCO{#1}{\infty}}
\renewcommand{\AE}[1]{\left[#1,\infty\right)_{alg}}
\DeclareMathOperator{\rk}{rk}
\DeclareMathOperator{\rrk}{\hspace{-0.5pt}\widetilde{\hspace{0.5pt}rk\hspace{0.5pt}}\hspace{-0.5pt}}
\DeclareMathOperator{\Hom}{Hom}
\DeclareMathOperator{\Sym}{Sym}
\DeclareMathOperator{\crit}{Crit}
\DeclareMathOperator{\Aut}{Aut}
\theoremstyle{plain}
\newtheorem{claim}[thm]{\protect\claimname}
\newtheorem{obs}[thm]{Observation}
  \providecommand{\claimname}{Claim}
  \providecommand{\corollaryname}{Corollary}
  \providecommand{\definitionname}{Definition}
  \providecommand{\lemmaname}{Lemma}
  \providecommand{\propositionname}{Proposition}
  \providecommand{\remarkname}{Remark}
  \providecommand{\theoremname}{Theorem}
\providecommand{\theoremname}{Theorem}
\begin{document}

\title{Measure Preserving Words are Primitive}

\author{Doron Puder%
\thanks{Supported by the ERC and by Adams Fellowship Program of the Israel
Academy of Sciences and Humanities.%
} \texttt{\small ~~~~~~~} Ori Parzanchevski%
\thanks{Supported by Advanced ERC Grant.%
}
\\
{\small }\\
{\small Einstein Institute of Mathematics }\\
{\small{} Hebrew University, Jerusalem}\\
{\small{} }\texttt{\small doronpuder@gmail.com~~~~~~~parzan@math.huji.ac.il}}
\maketitle
\begin{abstract}
We establish new characterizations of primitive elements and free
factors in free groups, which are based on the distributions they
induce on finite groups. For every finite group $G$, a word $w$
in the free group on $k$ generators induces a \emph{word map} from
$G^{k}$ to $G$. We say that $w$ is measure preserving with respect
to $G$ if given uniform distribution on $G^{k}$, the image of this
word map distributes uniformly on $G$. It is easy to see that primitive
words (words which belong to some basis of the free group) are measure
preserving w.r.t.\ all finite groups, and several authors have conjectured
that the two properties are, in fact, equivalent. Here we prove this
conjecture. The main ingredients of the proof include random coverings
of Stallings graphs, algebraic extensions of free groups, and Möbius
inversions. Our methods yield the stronger result that a subgroup
of $\F_{k}$ is measure preserving if and only if it is a free factor.

As an interesting corollary of this result we resolve a question on
the profinite topology of free groups and show that the primitive
elements of $\F_{k}$ form a closed set in this topology. 
\end{abstract}
\tableofcontents{}

\section{\label{sec:Introduction}Introduction}

This paper establishes a new characterization of primitive elements
in free groups, which is based on the distributions they induce on
finite groups. Let $\F_{k}$ be the free group on $k$ generators
$X=\left\{ x_{1,}\ldots,x_{k}\right\} $, and let $w=\prod_{j=1}^{r}x_{i_{j}}^{\varepsilon_{j}}$
($\varepsilon_{j}=\pm1$) be a word in $\mathbf{F}_{k}$. For every
group $G$, $w$ induces a \emph{word map} from the Cartesian product
$G^{k}$ to $G$, by substitutions:
\[
w:\left(g_{1},\ldots,g_{k}\right)\mapsto\prod_{j=1}^{r}g_{i_{j}}^{\varepsilon j}.
\]
The word $w$ is called \emph{measure preserving} with respect to
a finite group $G$ if all the fibers of this map are of equal size.
Namely, every element in $G$ is obtained by substitutions in $w$
the same number of times. We say that $w$ is \emph{measure preserving}
if it is measure preserving w.r.t.\ every finite group. The last
years have seen a great interest in word maps in groups, and the distributions
they induce. We refer the reader, for instance, to \cite{Sha09,LSh09,AV10,parzanchevski2012fourier},
and to the recent book \cite{Seg09} and survey \cite{Shalev2013}.
Several authors have also studied words which are asymptotically measure
preserving on finite simple groups, see e.g.\ \cite{LSh08,GSh09,BK12}.
\medskip{}

The word $w$ is called \emph{primitive} if it belongs to some basis
(free generating set) of $\mathbf{F}_{k}$. It is a simple observation
(see \ref{obs:prim-is-mp} below) that primitive words are measure
preserving, and several authors have conjectured that the converse
is also true. Namely, that measure preservation implies primitivity%
\footnote{It is interesting to note that there is an easy abelian parallel to
this conjecture. A word $w\in\F_{k}$ belongs to a basis of $\mathbb{Z}^{k}\cong\F_{k}/\F'_{k}$
iff for any group $G$ the associated word map is surjective. See
\cite{Seg09}, Lemma 3.1.1.%
}. From private conversations we know that this has occurred to the
following mathematicians and discussed among themselves: N. Avni,
T. Gelander, M. Larsen, A. Lubotzky and A. Shalev. The question was
independently raised in \cite{LP10} and also in \cite{AV10}, alongside
a generalization of it (see Section \ref{sec:Profinite}).

In \cite{Pud14a} the first author proved the conjecture for $\F_{2}$.
Here we prove it in full:
\begin{thm}
\label{thm:mp-word-is-prim}A measure preserving word is primitive.
\end{thm}
A key ingredient of the proof is the extension of the problem from
single words to (finitely generated) subgroups of $\F_{k}$. The concept
of primitive words extends naturally to the notion of free factors:
Let $H$ be a subgroup of the free group $J$ (in particular, $H$
is free as well). We say that $H$ is a \emph{free factor} of $J$,
and denote this by $H\ff J$, if there is a subgroup $H'\le J$ such
that $H*H'=J$. Equivalently, $H\ff J$ iff some basis of $H$ can
be extended to a basis of $J$. (This in turn is easily seen to be
equivalent to the condition that \emph{every} basis of $H$ extends
to a basis of $J$.)

In order to generalize the notion of measure preservation to subgroups,
we need to change a little our perspective of word maps. One can think
of the word map $w$ as the evaluation map from $\Hom\left(\F_{k},G\right)$
to $G$, i.e., $w\left(\alpha\right)=\alpha\left(w\right)$ for $\alpha\in\Hom\left(\F_{k},G\right)$.
The identification of $\Hom\left(\F_{k},G\right)$ with $G^{k}$ depends
on the chosen basis, and is due to the fact that a homomorphism from
a free group is uniquely determined by choosing the images of the
elements of a basis, and these images can be chosen arbitrarily. 

In this perspective, $w$ is measure preserving w.r.t.\ $G$ if the
element $\alpha_{G}\left(w\right)$ is uniformly distributed over
$G$, where $\alpha_{G}\in\Hom\left(\F_{k},G\right)$ is a homomorphism
chosen uniformly at random. If $w$ is primitive then it belongs to
some basis, and identifying $\Hom\left(\F_{k},G\right)$ and $G^{k}$
according to this basis gives

\begin{obs}\label{obs:prim-is-mp}A primitive word is measure preserving.\end{obs}

We can now extend the notion of measure preservation from words to
finitely generated subgroups (we write $H\fg\F_{k}$ when $H$ is
a finitely generated subgroups of $\F_{k}$):
\begin{defn}
\label{def:mp_subgroup}Let $H\fg\F_{k}$. We say that $H$ is \emph{measure
preserving} if for every finite group $G$ and $\alpha_{G}\in\Hom\left(\F_{k},G\right)$
a random homomorphism chosen with uniform distribution, $\alpha_{G}\big|_{H}$
is uniformly distributed in $\Hom\left(H,G\right)$. 
\end{defn}
This can be reformulated in terms of distributions of subgroups: Observe
the distribution of the random subgroup $\alpha_{G}\left(H\right)\leq G$,
where $\alpha_{G}\in\Hom\left(\F_{k},G\right)$ distributes uniformly.
Then $H$ is measure preserving if the distribution of $\alpha_{G}\left(H\right)$
is the same as that of the image of a uniformly chosen homomorphism
from $\mathbf{F}_{\rk\left(H\right)}$ to $G$ (where $\rk\left(H\right)$
denotes the rank of $H$). 

As for single words, it is immediate that a free factor is measure
preserving, and again it is natural to conjecture that the converse
also holds. Since $1\ne w\in\F_{k}$ is measure preserving iff $\left\langle w\right\rangle $
is measure preserving, this is an extension of the conjecture regarding
words. In \cite{Pud14a} the first author proved the extended conjecture
for subgroups of $\F_{k}$ of rank $\ge k-1$ (thus proving the conjecture
for $\F_{2}$), but the techniques used in that paper are specialized
for the proven cases. In this paper we introduce completely new techniques,
which yield the extended conjecture in full:
\begin{thm}
\label{thm:mp_is_prim}A measure preserving subgroup is a free factor.
\end{thm}
In Section \secref{Profinite} we explain how this circle of ideas
is related to the study of profinite groups and decidability questions.
In fact, part of the original motivation for this study comes from
this relation. In particular we have the following corollary (see
also Corollary \corref{prim_in_F_k_iff_prim_in_hF_k}):
\begin{cor}
\label{cor:prim_are_closed}The set $P$ of primitive elements in
$\F_{k}$ is closed in the profinite topology. 
\end{cor}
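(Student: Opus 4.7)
The plan is to deduce this directly from Theorem~\thmref{mp_is_prim} by showing that the complement of $P$ is open in the profinite topology. By the theorem, $w$ is primitive iff it is measure preserving, so it suffices to prove that the set $M$ of measure-preserving words in $\F_{k}$ is closed. The natural approach is to show $M$ is a union of cosets of open (i.e.\ finite-index normal) subgroups: if $w \in M$ then some whole coset $wN$ lies in $M$, and dually if $w \notin M$ then some coset $wN$ avoids $M$ entirely.

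The crucial observation is that measure preservation with respect to a \emph{fixed} finite group $G$ factors through a finite quotient of $\F_{k}$. Set
\[
N_{G} \;=\; \bigcap_{\varphi \in \Hom(\F_{k},G)} \ker\varphi.
\]
Since $\Hom(\F_{k},G)$ is finite and each kernel has finite index, $N_{G}$ is a finite-index normal subgroup of $\F_{k}$, hence open in the profinite topology. Moreover, if $w \equiv w' \pmod{N_{G}}$ then $\varphi(w) = \varphi(w')$ for every $\varphi \in \Hom(\F_{k},G)$, so the two word maps $G^{k} \to G$ induced by $w$ and by $w'$ are identical. In particular, one preserves the uniform measure on $G$ iff the other does.

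To finish, suppose $w \notin P$. Then $w$ is not measure preserving, so there exists a finite group $G$ with respect to which $w$ fails to preserve measure. By the observation above, every element of the open neighborhood $wN_{G}$ induces the same word map as $w$ on $G^{k}$, and therefore also fails to preserve measure with respect to $G$; by Theorem~\thmref{mp_is_prim} none of these elements is primitive. Hence $wN_{G} \subseteq \F_{k} \setminus P$, so $\F_{k} \setminus P$ is open and $P$ is closed. I do not foresee a genuine obstacle here: given the main theorem, the only point requiring care is the verification that $N_{G}$ has finite index, which is immediate from the finiteness of $\Hom(\F_{k},G)$.
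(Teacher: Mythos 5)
Your proof is correct, but it takes a genuinely different route from the paper's. The paper deduces Corollary~\corref{prim_are_closed} by first establishing Corollary~\corref{prim_in_F_k_iff_prim_in_hF_k}: writing $\hF$ for the profinite completion and $\widehat{P}$ for its set of primitive elements, the paper argues (via measure preservation and Theorem~\thmref{mp_is_prim}) that $P = \widehat{P} \cap \F_{k}$, and then invokes the standard fact that $\widehat{P}$ is closed in $\hF$ (which ultimately rests on compactness of $\Aut\hF$ acting continuously on $\hF$). Your argument avoids the profinite completion entirely: you observe that if $w$ fails to preserve measure with respect to some finite $G$, then the whole coset $wN_{G}$, with $N_{G} = \bigcap_{\varphi\in\Hom(\F_{k},G)}\ker\varphi$ of finite index, fails for the same $G$, so the complement of the measure-preserving set is open. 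This is more elementary and self-contained --- it needs nothing about $\hF$ or about orbits of $\Aut\hF$ --- and is closer in spirit to Theorem~\thmref{mp_is_prim}' (a single finite witness suffices). What the paper's route buys in exchange is the intrinsically interesting Corollary~\corref{prim_in_F_k_iff_prim_in_hF_k}, that an element of $\F_{k}$ is primitive in $\F_{k}$ iff it is primitive in $\hF$, which your argument does not by itself produce. One small presentational point: you state ``I do not foresee a genuine obstacle''; in fact every step you list is fully justified (finiteness of $\Hom(\F_{k},G)$, finite index of each $\ker\varphi$, invariance of the word map on cosets of $N_{G}$), so this should simply be asserted as a complete proof.
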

In plain terms, this amounts to the assertion that every non-primitive
word in $\F_{k}$ is contained in a primitive-free coset of a finite
index subgroup. \medskip{}

In order to prove Theorem \thmref{mp_is_prim}, one needs to exhibit,
for each non-primitive word $w\in\F_{k}$, some ``witness'' finite
group with respect to which $w$ is not measure preserving. Our witnesses
are always the symmetric groups $S_{n}.$ In fact, it is enough to
restrict one's attention to the average number of fixed points in
the random permutation $\alpha_{S_{n}}\left(w\right)$ (which we also
denote by $\alpha_{n}\left(w\right)$). We summarize this in the following
stronger version of Theorems \ref{thm:mp-word-is-prim} and \thmref{mp_is_prim}: 
\begin{thm*}[\ref*{thm:mp_is_prim}']
\label{thm:mp_is_prim_ext} Let $w\in\F_{k}$, and for every finite
group $G$, let $\alpha_{G}\in\Hom\left(\F_{k},G\right)$ denote a
random homomorphism chosen with uniform distribution. Then the following
are equivalent:
\begin{enumerate}
\item $w$ is primitive.
\item $w$ is measure preserving: for every finite group $G$ the random
element $\alpha_{G}\left(w\right)$ has uniform distribution.
\item For every $n\in\mathbb{N}$ the random permutation $\alpha_{n}\left(w\right)=\alpha_{S_{n}}\left(w\right)$
has uniform distribution.
\item For every $n\in\mathbb{N}$, the expected number of fixed points in
the random permutation $\alpha_{n}\left(w\right)=\alpha_{S_{n}}\left(w\right)$
is 1:
\[
\mathbb{E}\left[\#\mathrm{fix}\left(\alpha_{n}(w)\right)\right]=1
\]

\item For infinitely many $n\in\mathbb{N}$, 
\[
\mathbb{E}\left[\#\mathrm{fix}\left(\alpha_{n}(w)\right)\right]\le1
\]

\end{enumerate}
The analogue properties for f.g.\ subgroups are equivalent as well.
For example, the parallel of property $\left(4\right)$ for $H\fg\F_{k}$
is that for every $n$, the image $\alpha_{n}\left(H\right)\subseteq S_{n}$
stabilizes on average exactly $n^{1-\rk\left(H\right)}$ elements
of \textup{$\left\{ 1,\ldots,n\right\} $}. 
\end{thm*}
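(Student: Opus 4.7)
The implications $(1)\Rightarrow(2)\Rightarrow(3)\Rightarrow(4)\Rightarrow(5)$ are routine. For $(1)\Rightarrow(2)$, if $w$ belongs to a basis of $\F_k$, then under the identification $\Hom(\F_k,G)\cong G^{k}$ the coordinate $\alpha_G(w)$ is one of $k$ independent uniform draws from $G$, hence uniform. Specialization to $G=S_n$ yields $(2)\Rightarrow(3)$; the classical identity $\mathbb{E}[\#\mathrm{fix}(\sigma)]=1$ for uniform $\sigma\in S_n$ gives $(3)\Rightarrow(4)$; and $(4)\Rightarrow(5)$ is trivial.

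The substance is $(5)\Rightarrow(1)$. Given Theorem~\thmref{mp_is_prim}, the plan is to prove the contrapositive: if $w$ is not primitive, then $\mathbb{E}[\#\mathrm{fix}(\alpha_n(w))]>1$ for all sufficiently large $n$. My approach is to derive an exact formula for this expectation as a sum over subgroup data, and then extract its leading asymptotic in $n$.

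By $S_n$-symmetry, $\mathbb{E}[\#\mathrm{fix}(\alpha_n(w))]=n\cdot\Pr[\alpha_n(w)\cdot 1=1]$, and a homomorphism $\alpha\in\Hom(\F_k,S_n)$ fixes $1$ under $w$ iff $w$ belongs to $\mathrm{Stab}_\alpha(1)$, a finite-index subgroup of $\F_k$ containing $w$. Via the standard bijection between pointed $\F_k$-actions of degree $m$ and index-$m$ subgroups (passing through Stallings cores), I would recast the expectation as a sum over finite-index $J\le\F_k$ with $w\in J$ of counts of homomorphisms whose relevant point stabilizer equals $J$. Next, using the factorization $\langle w\rangle\alg H\ff J$ of any such extension together with Takahasi's theorem that the algebraic extensions of $\langle w\rangle$ inside $\F_k$ form a \emph{finite} lattice, a Möbius inversion on this lattice collapses the sum to
\[
\mathbb{E}[\#\mathrm{fix}(\alpha_n(w))]\;=\;1\;+\sum_{\langle w\rangle\,\alg\,H\,\le\,\F_k,\;H\ne\F_k} c_H(n),
\]
where each $c_H(n)$ is a polynomial in $n$ whose degree is controlled by $1-\rk(H)$.

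The main obstacle, and what I expect is the technical heart of the paper, is to establish strict positivity of this tail sum for all large $n$ when $w$ is non-primitive. Concretely, one must (a) identify the \emph{critical} algebraic extensions of minimal ``deficiency'' $1-\rk(H)$, (b) prove that their individual contributions are positive — presumably via a random-covering interpretation of the Stallings graph of each such $H$ that makes the relevant counts manifestly nonnegative — and (c) bound the lower-order non-critical contributions so that they cannot swamp the leading term. Non-primitivity of $w$ guarantees that the set of critical extensions is nonempty (primitivity being precisely the statement that $\langle w\rangle$ has no proper algebraic extension in $\F_k$ other than $\F_k$ itself). Once (a)--(c) are in place, one obtains $\mathbb{E}[\#\mathrm{fix}(\alpha_n(w))]>1$ eventually in $n$, contradicting (5) and completing the chain.
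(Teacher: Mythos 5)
Your treatment of the easy implications $(1)\Rightarrow(2)\Rightarrow(3)\Rightarrow(4)\Rightarrow(5)$ and your identification of $(5)\Rightarrow(1)$ as the real content (proved via its contrapositive) match the paper. You also correctly see that the target formula should be
\[
\mathbb{E}\left[\#\mathrm{fix}\left(\alpha_{n}(w)\right)\right]\;=\;1+\sum_{\left\langle w\right\rangle \algne N\le\F_{k}}R_{\left\langle w\right\rangle ,N}\left(n\right),
\]
a sum over the (finitely many) proper algebraic extensions of $\left\langle w\right\rangle $, and that non-primitivity guarantees this index set is nonempty; this is precisely the role of \eqref{phi=00003Dsum-of-alg} and Proposition \propref{R-independent-of-basis} in the paper. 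However, two pieces of your plan go astray.

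First, your proposed decomposition begins by summing over finite-index subgroups $J$ (point stabilizers). This gives an a~priori \emph{infinite} sum and is not where the finiteness is harvested. The paper instead starts from a \emph{finite} decomposition: by Lemma \lemref{Phi-as-lifts} the expectation is the expected number of lifts of $\Gamma_{X}\left(\left\langle w\right\rangle \right)$ to a random $n$-covering of $\Gamma_{X}\left(\F_{k}\right)$, and each lift factors uniquely as a quotient onto $\Gamma_{X}(M)$ followed by an embedding, with $M$ ranging over the finite $X$-fringe $\XF{\left\langle w\right\rangle }$ (Lemma \lemref{L-as-lifts}). That is, the decomposition is by the \emph{image} subgroup $M$ of the lift, not by the stabilizer, and it is finite from the outset because $\XF{\left\langle w\right\rangle }$ is finite.

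Second, and more seriously, your step (b) — that positivity of the critical contributions should be ``manifestly nonnegative'' via a covering interpretation — does not work. The quantity $L_{M,J}^{X}$ (expected number of \emph{injective} lifts) is indeed manifestly nonnegative, but $R_{H,N}$ is a Möbius \emph{derivation} and carries alternating signs; nothing makes it nonnegative on its face. The paper's actual mechanism is quantitative, not sign-theoretic: Proposition \propref{order-magnitude-R} shows that for $H\alg N$,
\[
R_{H,N}\left(n\right)=\frac{1}{n^{\widetilde{\rk}\left(N\right)}}+O\left(\frac{1}{n^{\widetilde{\rk}\left(N\right)+1}}\right),
\]
so each such term is eventually positive with an explicitly identified leading exponent. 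Establishing this estimate is the crux, and it relies on two ingredients you do not invoke: a Stirling-number expansion of $L_{M,J}^{X}$ giving a combinatorial bound $C_{M,N}^{X}\left(n\right)=O\bigl(n^{-\widetilde{\rk}(M)-\rho_{X}(M,N)}\bigr)$ for the two-sided derivation (Proposition \propref{order-magnitude-C}), and, crucially, Theorem \thmref{rho}, which says $H\ff J\Leftrightarrow\rho_{X}\left(H,J\right)=\rk\left(J\right)-\rk\left(H\right)$. It is this rigidity theorem that converts ``$N$ is an algebraic extension of $H$'' into the strict gain of one extra power of $n^{-1}$ for every $M\ne N$ in the left-integration $R=\zeta^{X}*C^{X}$. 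Without Theorem \thmref{rho} and the $\rho_{X}$-distance apparatus, there is no route from the combinatorics to the required lower bound, so your outline, as it stands, has a genuine gap at the step you yourself flag as (b).
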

We already explained above the implication $\left(1\right)\Rightarrow\left(2\right)$,
and $\left(2\right)\Rightarrow\left(3\right)\Rightarrow\left(4\right)\Rightarrow\left(5\right)$
is evident (recall that a uniformly distributed random permutation
has exactly one fixed point on average). The only nontrivial, somewhat
surprising part, is the implication $\left(5\right)\Rightarrow\left(1\right)$
which is proven in this paper. It turns out that an effective bound
can also be obtained:
\begin{prop}
\label{prop:effective-bound}A word $w$ of length $\ell>0$ is primitive
iff $\mathbb{E}\left[\#\mathrm{fix}\left(\alpha_{n}(w)\right)\right]=1$
for $n\leq\ell$.
\end{prop}
An analogue result holds for subgroups (see Corollary \ref{cor:effective-bound-subgroups}).

\medskip{}

A key role in our proof is played by the notion of \emph{primitivity
rank}, an invariant classifying words and f.g.\ subgroups of $\F_{k}$,
which was first introduced in \cite{Pud14a}:\emph{ }A primitive word
$w\in\F_{k}$ is also primitive in every subgroup containing it (Claim
\claref{ff-properties}\enuref{intermideate-in-free}). However, if
$w$ is not primitive in $\F_{k}$, it may be either primitive or
non-primitive in subgroups of $\F_{k}$ containing it. But what is
the smallest rank of a subgroup giving evidence to the imprimitivity
of $w$? Informally, how far does one have to search in order to establish
that $w$ is not primitive? Concretely:
\begin{defn}
\label{def:prim_rank} The \emph{primitivity rank} of $w\in\F_{k}$,
denoted $\pi\left(w\right)$, is 
\[
\pi(w)=\min\left\{ \rk\left(J\right)\,\middle|\,\begin{gathered}w\in J\le\F_{k}~s.t.\\
w\textrm{ is \textbf{not} primitive in \ensuremath{J}}
\end{gathered}
\right\} .
\]
 If no such $J$ exists, i.e.\ if $w$ is primitive, then $\pi\left(w\right)=\infty$. 

More generally, for $H\fg\F_{k}$, the \emph{primitivity rank} of
$H$ is 
\[
\pi\left(H\right)=\min\left\{ \rk\left(J\right)\,\middle|\,\begin{gathered}H\le J\le\F_{k}~s.t.\\
H\textrm{ is \textbf{not} a free factor of \ensuremath{J}}
\end{gathered}
\right\} .
\]
Again, if no such $J$ exists, then $\pi\left(H\right)=\infty$. We
call a subgroup $J$ for which the minimum is obtained \textbf{$H$-critical},
and denote the set of $H$-critical subgroups by $\crit\left(H\right)$.
The set of $w$-critical subgroups of a word $w$ is defined analogously.
\end{defn}
Note that for $w\ne1$, $\pi\left(w\right)=\pi\left(\left\langle w\right\rangle \right)$.
Let us give a few examples: $\pi\left(w\right)=0$ iff $w=1$; $\pi\left(w\right)=\infty$
iff $w$ is primitive, and $\pi\left(H\right)=\infty$ iff $H$ is
a free factor; $\pi\left(w\right)=1$ if and only if $w$ is a proper
power, namely $w=v^{d}$ for some $v\in\mathbf{F}_{k}$ and $d\ge2$,
and then $\crit\left(w\right)=\left\{ \left\langle v^{m}\right\rangle \,:\, m\mid d,\,1\leq m<d\right\} $
(assuming that $v$ itself is not a power). By \cite[Lemma 6.8]{Pud14a},
$\pi\left(x_{1}^{\;2}\ldots x_{r}^{\;2}\right)=r$ for every $1\leq r\leq k$.
We thus have that $\pi$ takes all values in $\left\{ 0,1,2,\ldots,k\right\} \cup\left\{ \infty\right\} $,
and Claim \claref{ff-properties}\enuref{intermideate-in-free} shows
that these are all the values it obtains. The primitivity rank of
a word or a subgroup is computable - this is shown in Section \secref{Algebraic-Extensions}.
The distribution of the primitivity rank is discussed in \cite{Pud14b}.

In this paper we sometimes find it more convenient to deal with \emph{reduced
ranks} of subgroups: $\rrk\left(H\right)\overset{{\scriptscriptstyle def}}{=}\rk\left(H\right)-1$.
We therefore define analogously the \emph{reduced primitivity rank},\emph{
}$\rp\left(\cdot\right)\overset{{\scriptscriptstyle def}}{=}\pi\left(\cdot\right)-1$.\medskip{}

As mentioned above, our main result follows from an analysis of the
average number of common fixed points of $\alpha_{n}\left(H\right)$
(where $\alpha_{n}$ denotes a uniformly distributed random homomorphism
in $\Hom\left(\F_{k},S_{n}\right)$). In other words, we count the
number of elements in $\left\{ 1,\ldots,n\right\} $ stabilized by
the images under $\alpha_{n}$ of \emph{all} elements of $H$. Theorem
\nameref{thm:mp_is_prim_ext} follows from the main result of this
analysis:
\begin{thm}
\label{thm:avg_fixed_points}The average number of common fixed points
of $\alpha_{n}\left(H\right)$ is 
\[
\frac{1}{n^{\rrk\left(H\right)}}+\frac{\left|\crit\left(H\right)\right|}{n^{\rp\left(H\right)}}+O\left(\frac{1}{n^{\rp\left(H\right)+1}}\right).
\]
In particular, for a word $w$
\[
\mathbb{E}\left[\#\mathrm{fix}\left(\alpha_{n}\left(w\right)\right)\right]=1+\frac{\left|\crit\left(w\right)\right|}{n^{\rp\left(w\right)}}+O\left(\frac{1}{n^{\rp\left(w\right)+1}}\right).
\]

\end{thm}
We remark that $\crit\left(H\right)$ is always finite (see Section
\ref{sec:Algebraic-Extensions}). Table \tabref{Primitivity-Rank-and-fixed-points}
summarizes the connection implied by Theorem \thmref{avg_fixed_points}
between the primitivity rank of $w$ and the average number of fixed
points in the random permutation $\alpha_{n}\left(w\right)$.

\begin{table}[h]
\begin{centering}
\begin{tabular}{|c|c|c|}
\hline 
$\pi\left(w\right)$ & Description of $w$ & $\mathbb{E}\left[\#\mathrm{fix}\left(\alpha_{n}\left(w\right)\right)\right]$\tabularnewline
\hline 
\hline 
$0$ & $w=1$ & $n$\tabularnewline
\hline 
$1\vphantom{\Big[}$ & $w$ is a power & $1+|\crit\left(w\right)|+O\left(\frac{1}{n}\right)$\tabularnewline
\hline 
$2\vphantom{\Big[}$ & E.g.\ $\left[x_{1},x_{2}\right],x_{1}^{\,2}x_{2}^{\,2}$ & $1+\frac{|\crit\left(w\right)|}{n}+O\left(\frac{1}{n^{2}}\right)$\tabularnewline
\hline 
$3\vphantom{\Big[}$ &  & $1+\frac{|\crit\left(w\right)|}{n^{2}}+O\left(\frac{1}{n^{3}}\right)$\tabularnewline
\hline 
$\vdots$ &  & $\vdots$\tabularnewline
\hline 
$k\vphantom{\Big[}$ & E.g.\ $x_{1}^{\,2}\ldots x_{k}^{\,2}$ & $1+\frac{|\crit\left(w\right)|}{n^{k-1}}+O\left(\frac{1}{n^{k}}\right)$\tabularnewline
\hline 
$\infty$ & $w$ is primitive & $1$\tabularnewline
\hline 
\end{tabular}
\par\end{centering}

\centering{}\caption{Primitivity Rank and Average Number of Fixed Points.\label{tab:Primitivity-Rank-and-fixed-points}}
\end{table}

Theorem \thmref{avg_fixed_points} implies the following general corollary
regarding the family of distributions of $S_{n}$ induced by word
maps:
\begin{cor}
\label{cor:fixed-points-at-least-1}For a non-primitive $w\in\F_{k}$
the average number of fixed points in $\alpha_{n}\left(w\right)$
is strictly greater than 1, for large enough $n$.
\end{cor}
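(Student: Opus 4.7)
The plan is to deduce the corollary directly from Theorem \thmref{avg_fixed_points} together with the observation that for an imprimitive $w$ the set of $w$-critical subgroups is nonempty.

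First I would dispose of the trivial case $w=1$: here $\pi\left(w\right)=0$ and $\mathbb{E}\left[\#\mathrm{fix}\left(\alpha_{n}\left(w\right)\right)\right]=n>1$ for all $n\geq 2$. So assume $w\neq 1$, which gives $\rk\left(\la w\ra\right)=1$ and hence $\widetilde{rk}\left(\la w\ra\right)=0$. Applying Theorem \thmref{avg_fixed_points} to $H=\la w\ra$ (and using the identification $\crit\left(w\right)=\crit\left(\la w\ra\right)$, $\rp\left(w\right)=\rp\left(\la w\ra\right)$ already recorded in the text), I obtain the asymptotic expansion
\[
\mathbb{E}\left[\#\mathrm{fix}\left(\alpha_{n}\left(w\right)\right)\right]\;=\;1\;+\;\frac{|\crit\left(w\right)|}{n^{\rp\left(w\right)}}\;+\;O\!\left(\frac{1}{n^{\rp\left(w\right)+1}}\right).
\]

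Next I would use the hypothesis that $w$ is imprimitive. By Definition \defref{prim_rank}, imprimitivity of $w$ means $\pi\left(w\right)<\infty$, so at least one $w$-critical subgroup exists, i.e.\ $|\crit\left(w\right)|\geq 1$. Moreover, since $\pi\left(w\right)\leq k$, the reduced primitivity rank satisfies $\rp\left(w\right)\leq k-1$, so $n^{\rp\left(w\right)}$ is a genuine polynomial in $n$ (possibly the constant $1$ when $\pi\left(w\right)=1$).

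It now follows that the correction term $|\crit\left(w\right)|/n^{\rp\left(w\right)}$ is a strictly positive quantity of order $n^{-\rp\left(w\right)}$, while the error term is only $O\!\left(n^{-\rp\left(w\right)-1}\right)$. Thus for all sufficiently large $n$ the sum of the last two terms is strictly positive, giving $\mathbb{E}\left[\#\mathrm{fix}\left(\alpha_{n}\left(w\right)\right)\right]>1$, as required.

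There is no real obstacle here beyond invoking Theorem \thmref{avg_fixed_points}; all the work is absorbed into that theorem. The only point demanding a moment's care is verifying that imprimitivity of $w$ translates into nonemptiness of $\crit\left(w\right)$, which is immediate from the definition of primitivity rank since the minimum in Definition \defref{prim_rank} is attained whenever it is finite.
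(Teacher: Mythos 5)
Your proof is correct and is exactly what the paper has in mind: the corollary is an immediate consequence of Theorem \ref{thm:avg_fixed_points}, once one observes that imprimitivity gives $\pi(w)<\infty$ and hence $|\crit(w)|\geq 1$, so the positive correction term $|\crit(w)|/n^{\rp(w)}$ dominates the $O(n^{-\rp(w)-1})$ error for large $n$. The separate handling of $w=1$ is a sensible precaution, and the rest of the argument matches the paper's intent.
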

Corollary \corref{fixed-points-at-least-1} is in fact the missing
piece $\left(5\right)\Rightarrow\left(1\right)$ in Theorem \nameref{thm:mp_is_prim_ext}.
In addition, it follows from this corollary that for every $w\in\F_{k}$
and large enough $n$, the average number of fixed points in $\alpha_{n}\left(w\right)$
is at least one%
\footnote{It is suggestive to ask whether this holds for \emph{all }$n$. Namely,
is it true that for every $w\in\F_{k}$ and every $n$, the average
number of fixed points in $\alpha_{n}\left(w\right)$ is at least
1? By results of Abért (\cite{Abe06}), this statement turns out to
be false.%
}. In other words, primitive words generically induce a distribution
of $S_{n}$ with the fewest fixed points on average. 

The results stated above validate completely the conjectural picture
described in \cite{Pud14a}. Theorem \thmref{avg_fixed_points} and
its consequences, Corollaries \corref{prim_in_F_k_iff_prim_in_hF_k},
\corref{prim_are_closed} and \corref{fixed-points-at-least-1}, are
stated there as conjectures (Conjectures 1.10, 7.1, 7.2 and 8.2).\medskip{}

The analysis of the average number of fixed points in $\alpha_{n}\left(w\right)$
has its roots in \cite{Nic94}. Nica notices that by studying the
various quotients of a labeled cycle-graph (corresponding to $w$),
one can compute a rational expression which gives this average for
every large enough $n$. When $w=v^{d}$ with $d$ maximal (so $v$
is not a power), he shows that the limit distribution of the number
of fixed points in $\alpha_{n}\left(w\right)$ (as $n\to\infty$)
is $\delta\left(d\right)+O\left(\frac{1}{n}\right)$, where $\delta\left(d\right)$
is the number of divisors of $d$ (\cite{Nic94}, Corollary 1.3)%
\footnote{Nica's result is in fact more general: the same statement holds not
only for fixed points but for cycles of length $L$ for every fixed
$L$.%
}. Nica's result follows from Theorem \thmref{avg_fixed_points}: if
$w\neq1$ is a proper power and $w=v^{d}$ with $d\geq2$ maximal,
then $|\crit\left(w\right)|=\delta\left(d\right)-1$, and if it is
not a power then $\rp\left(w\right)\geq1$. \medskip{}

The results of this paper have interesting implications in the study
of expansion in random graphs: In \cite{Pud14b}, the first author
presents a new approach to showing that random graphs are nearly optimal
expanders. A crucial ingredient in the proof is Theorem \thmref{avg_fixed_points}.
More particularly, it was conjectured by Alon \cite{Alo86} that the
spectral gap of a random $d$-regular graph is a.a.s.\ arbitrarily
close to $d-2\sqrt{d-1}$, and this conjecture was generalized by
Friedman \cite{Fri03} to non-regular graphs. In \cite{Fri08}, Alon's
conjecture is proved by highly sophisticated arguments, which are
not applicable for the generalized conjecture (as far as is known).
The results in \cite{Pud14b} give a simple proof which nearly recovers
Friedman's results regarding Alon's conjecture, and can be applied
also for the generalized conjecture, giving the best results as of
now regarding non-regular graphs.

\section{Overview of the proof\label{sec:Overview-of-the}}

The proof of our main theorem involves several structures of posets
(partially ordered sets) on $\mathfrak{sub}_{f\! g}\left(\mathbf{F}_{k}\right)$,
the set of finitely generated subgroups of $\F_{k}$. This set has,
of course, a natural structure of a poset given by the relation of
inclusion. However, there are other interesting partial orders defined
on it: the relation of \emph{algebraic extensions}, and the family
of relations defined by \emph{covers}. We introduce some notation:
If $\preceq$ is some partial order on $\mathfrak{sub}_{f\! g}\left(\mathbf{F}_{k}\right)$,
and $H,J\fg\F_{k}$, we define the \emph{closed interval} 
\[
\left[H,J\right]_{\preceq}=\left\{ L\in\mathfrak{sub}_{f\! g}\left(\mathbf{F}_{k}\right)\,\middle|\, H\preceq L\preceq J\right\} 
\]
and similarly the open interval $\left(H,J\right)_{\preceq}=\left\{ L\,\middle|\, H\precneqq L\precneqq J\right\} $,
the half-bounded interval $\left[H,\infty\right)_{\preceq}=\left\{ L\,\middle|\, H\preceq L\right\} $,
and so on (see also the glossary).

\paragraph*{Algebraic\ Extensions}

This notion goes back to \cite{takahasi1951note}, and was further
studied in \cite{KM02,MVW07}.
\begin{defn}
\label{def:algebraic-extension}We say that $J$ is an \emph{algebraic
extension }of $H$, denoted $H\alg J$, if $H\le J$ and $H$ is not
contained in any proper free factor of $J$.
\end{defn}
The terminology comes from similarities (that go only to some extent)
between this notion and that of algebraic extensions of fields (in
this line of thought, $J$ is a \emph{transcendental} \emph{extension
}of\emph{ $H$ }when $H\ff J$). We devote Section \secref{Algebraic-Extensions}
to study this relation. It is clearly reflexive and antisymmetric,
but it is also transitive (Claim \claref{alg-transitive}). In addition,
it is very sparse: it turns out that $\AE{H}$, the set of algebraic
extensions of $H$, is finite for every $H\fg\F_{k}$, so in particular
$\left(\mathfrak{sub}_{f\! g}\left(\mathbf{F}_{k}\right),\alg\right)$
is locally finite%
\footnote{A \emph{locally finite} poset is one in which every closed interval
$[a,b]=\{x\,:\, a\le x\le b\}$ is finite.%
}. It is a simple observation that $H$-critical subgroups are in particular
algebraic extensions of $H$, i.e.\ $\crit\left(H\right)\subseteq\AE{H}$.
In fact, they are the proper algebraic extensions of minimal rank.

\paragraph*{$X$-cover}

For every basis $X=\left\{ x_{1},\ldots,x_{k}\right\} $ of $\F_{k}$
there is a partial order denoted $\covers$, which is based on the
notion of quotients, or surjective morphisms, of \emph{core graphs}.
Introduced in \cite{Sta83}, core graphs provide a geometric approach
to the study of free groups (for an extensive survey see \cite{KM02},
and also \cite{MVW07} and the references therein). Given the basis
$X$, Stallings associates with every $H\le\F_{k}$ a directed and
pointed graph denoted $\G_{X}\left(H\right)$, whose edges are labeled
by the elements of $X$. A full definition appears in Section \secref{Core-Graphs},
but we illustrate the concept in Figure \figref{first_core_graph}.
It shows the core graph of the subgroup of $\F_{2}$ generated by
$x_{1}x_{2}^{-1}x_{1}$ and $x_{1}^{-2}x_{2}$, with $X=\left\{ x_{1},x_{2}\right\} $.\FigBesBeg \\
\begin{figure}[h]
\centering{}%
\begin{minipage}[t]{0.4\columnwidth}%
\[
\xymatrix{\otimes\ar[rr]^{{\textstyle x_{1}}} &  & \bullet\\
\\
\bullet\ar[rr]^{{\textstyle x_{1}}}\ar[uu]_{{\textstyle x_{2}}} &  & \bullet\ar[uull]_{{\textstyle x_{1}}}\ar[uu]_{{\textstyle x_{2}}}
}
\]
\end{minipage}\caption{\label{fig:first_core_graph} The core graph $\G_{X}\left(H\right)$
where $X=\left\{ x_{1},x_{2}\right\} $ and $H=\left\langle x_{1}x_{2}^{-1}x_{1},x_{1}^{-2}x_{2}\right\rangle \leq\F_{2}$.}
\end{figure}
\FigBesEnd \\
The order $\covers$ is defined as follows: for $H,J\leq\mathbf{F}_{k}$
one has $H\covers J$ iff the associated core graph $\G_{X}\left(J\right)$
is a quotient (as a pointed labeled graph) of the core graph $\G_{X}\left(H\right)$
(see Definition \defref{quotient}). When $H\fg\F_{k}$, $\G_{X}\left(H\right)$
is finite (Claim \claref{core-graphs-properties}\enuref{finite-rk-graph}),
and thus has only finitely many quotients. As it turns out that different
groups correspond to different core graphs, this implies that $\left(\mathfrak{sub}_{f\! g}\left(\mathbf{F}_{k}\right),\covers\right)$
is locally finite too. We stress that we have here an infinite family
of partial orders, one for every choice of basis for $\F_{k}$. Although
the dependency on the basis makes these orders somewhat less universal,
they turn out to be the most useful for our purposes.

\medskip{}

The various relations between subgroups of $\mathbf{F}_{k}$ are the
following: 
\[
J\in\crit\left(H\right)\:\Rightarrow\: H\alg J\:\Rightarrow\: H\covers J\:\Rightarrow\: H\leq J
\]
for any $H,J\leq\mathbf{F}_{k}$ and any basis $X$ (see Sections
\secref{Core-Graphs} and \secref{Algebraic-Extensions}).\medskip{}

Recall that the main theorems of this paper follow from Theorem \ref{thm:avg_fixed_points},
which estimates the expected number of common fixed points of $\alpha_{n}\left(H\right)$,
where $H\fg\mathbf{F}_{k}$ and $\alpha_{n}$ is a random homomorphism
in $\Hom\left(\mathbf{F}_{k},S_{n}\right)$. This result is achieved
by studying a broader question: For every pair of $H,J\fg\F_{k}$
such that $H\le J$, we define for $n\in\mathbb{N}$ 
\begin{equation}
\Phi_{H,J}\left(n\right)=\textrm{The expected number of common fixed points of \ensuremath{\alpha_{J,n}\left(H\right)}},\label{eq:Phi}
\end{equation}
where $\alpha_{J,n}\in\Hom\left(J,S_{n}\right)$ is a random homomorphism
(chosen with uniform distribution). In this perspective, Nica finds
$\lim_{n\rightarrow\infty}\Phi_{\left\langle w\right\rangle ,\F_{k}}\left(n\right)$,
and shows that it separates powers and non-powers. Theorem \ref{thm:avg_fixed_points}
shows that the first two terms in the expansion of $\Phi_{\left\langle w\right\rangle ,\mathbf{F}_{k}}\left(n\right)$
yield $w$'s primitivity rank, which in particular distinguishes powers
($\pi\left(w\right)=1$) and primitives ($\pi\left(w\right)=\infty$).
Furthermore, the same holds for subgroups using $\Phi_{H,\mathbf{F}_{k}}\left(n\right)$.

\begin{wrapfigure}{o}{0.3\columnwidth}%
\vspace{-6mm}
\[
\xymatrix{ & \Phi\ar@{-}[dl]\ar@{-}[dr]\\
L^{X}\ar@{-}[dr] &  & R^{X}\ar@{-}[dl]\\
 & C^{X}
}
\]
\end{wrapfigure}%
As remarked, in order to understand $\Phi_{H,\mathbf{F}_{k}}$ we
turn to analyze the totality of functions $\Phi_{H,J}$, for various
$H\leq J\leq\mathbf{F}_{k}$. We apply the machinery of Möbius inversions
to the incidence algebra arising from the locally finite poset $\left(\mathfrak{sub}_{f\! g}\left(\mathbf{F}_{k}\right),\covers\right)$.
The local finiteness of the order $\covers$ allows us to ``derive''
the function $\Phi$ and obtain its ``right derivation'' $R^{X}$,
its ``left derivation'' $L^{X}$, and its ``two sided derivation''
$C^{X}$ (see Section \ref{sec:Mobius-Inversions}). For instance,
$\Phi_{H,J}$ can be presented as finite sums of $R^{X}$:

\[
\Phi_{H,J}=\sum\limits _{M\in\XC{H}{J}}R_{H,M}^{X}
\]
(here $\XC{H}{J}$ is an abbreviation for $\left[H,J\right]_{\leq_{\Xcov}}$,
i.e.\ $\XC{H}{J}=\left\{ M\,\middle|\, H\covers M\covers J\right\} $). 

The proof of Theorem \thmref{avg_fixed_points} is then based on a
series of lemmas and propositions characterizing $\Phi$ and its three
derivations:
\begin{itemize}
\item (Proposition \propref{R-independent-of-basis}) The right derivation
$R^{X}$ is supported on algebraic extensions, i.e.\ if $H\covers M$
but $M$ is not an algebraic extension of $H$ then $R_{H,M}^{X}\equiv0$. 
\item (The discussion in Section \ref{sec:Random-Covers}) The random homomorphism
$\alpha_{J,n}\in\mathrm{Hom}\left(J,S_{n}\right)$ can be encoded
as a random covering-space $\widehat{\Gamma}$ of the core graph $\Gamma_{X}\left(J\right)$,
and $\Phi_{H,J}\left(n\right)$ can then be interpreted as the expected
number of lifts of $\Gamma_{X}\left(H\right)$ into $\widehat{\Gamma}$.
\item (Lemmas \ref{lem:L-as-lifts} and \ref{lem:L-formula}) The left derivation
$L^{X}$ is the expected number of \emph{injective} lifts of the core
graph $\Gamma_{X}\left(H\right)$ into the random covering $\widehat{\Gamma}$
of the core graph $\Gamma_{X}\left(J\right)$, and a rational expression
can be computed for $L_{H,J}^{X}$.
\item (Proposition \ref{prop:order-magnitude-C} and Section \ref{sub:C})
An analysis involving Stirling numbers of the rational expressions
for $L^{X}$ yields a combinatorial meaning for the two-sided derivation
$C^{X}$. Using the classification of primitivity rank we then obtain
a first-order estimate for the size of $C_{H,J}^{X}$.
\item (Proposition \propref{order-magnitude-R}) From $C^{X}$ we return
to $R^{X}$ (by ``left-integration''), obtaining that whenever $H\alg M$
we have 
\[
R_{H,M}^{X}=\frac{1}{n^{\rrk(M)}}+O\left(\frac{1}{n^{\rrk(M)+1}}\right)
\]
and by right integration of $R^{X}$, we obtain the order of magnitude
of $\Phi$, which was our goal.
\end{itemize}
The paper is arranged as follows: in Section \ref{sec:Core-Graphs}
the notion of core graphs is explained in details, as well as the
partial order $\covers$ and some of the results from \cite{Pud14a}
which are used here. In Section \ref{sec:Algebraic-Extensions} we
survey the main properties of algebraic extensions of free groups.
Section \ref{sec:Mobius-Inversions} is devoted to recalling Möbius
derivations on locally-finite posets and introducing the different
derivations of $\Phi$. In Section \ref{sec:Random-Covers} we discuss
the connection of the problem to random coverings of graphs and analyze
the left derivation $L^{X}$. The proof of Theorem \ref{thm:avg_fixed_points}
is completed in Section \ref{sec:proof} via the analysis of the two-sided
derivation $C^{X}$ and the consequence of the latter on the right
derivation $R^{X}$. Finally, corollaries of our results to the field
of profinite groups, and to decidability questions in group theory,
are discussed in Section \ref{sec:Profinite}. We finish with a list
of open problems naturally arising from this paper. For the reader's
convenience, there is also a glossary of notions and notations at
the end of this manuscript.

\section{\label{sec:Core-Graphs}Core graphs and the partial order of covers}

Fix a basis $X=\left\{ x_{1},\ldots,x_{k}\right\} $ of $\F_{k}$.
Associated with every subgroup $H\le\F_{k}$ is a directed, pointed
graph whose edges are labeled by $X$. This graph is called \emph{the
(Stallings) core-graph associated with $H$} and is denoted by $\G_{X}\left(H\right)$.
We recall the notion of the Schreier (right) coset graph of $H$ with
respect to the basis $X$, denoted by $\overline{\G}_{X}\left(H\right)$.
This is a directed, pointed and edge-labeled graph. Its vertex set
is the set of all right cosets of $H$ in $\F_{k}$, where the basepoint
corresponds to the trivial coset $H$. For every coset $Hw$ and every
basis-element $x_{j}$ there is a directed $j$-edge (short for $x_{j}$-edge)
going from the vertex $Hw$ to the vertex $Hwx_{j}$.%
\footnote{Alternatively, $\overline{\G}_{X}\left(H\right)$ is the quotient
$H\backslash T$, where $T$ is the Cayley graph of $\F_{k}$ with
respect to the basis $X$, and $F_{k}$ (and thus also H) acts on
this graph from the left. Moreover, this is the covering-space of
$\overline{\G}_{X}\left(F_{k}\right)=\Gamma_{X}\left(F_{k}\right)$,
the bouquet of k loops, corresponding to $H$, via the correspondence
between pointed covering spaces of a space $Y$ and subgroups of its
fundamental group $\pi_{1}\left(Y\right)$.%
}

The core graph $\G_{X}\left(H\right)$ is obtained from $\overline{\G}_{X}\left(H\right)$
by omitting all the vertices and edges of $\overline{\G}_{X}\left(H\right)$
which are not traced by any reduced (i.e., non-backtracking) path
that starts and ends at the basepoint. Stated informally, we trim
all ``hanging trees'' from $\overline{\G}_{X}\left(H\right)$. Formally,
$\G_{X}\left(H\right)$ is the induced subgraph of $\overline{\G}_{X}\left(H\right)$
whose vertices are all cosets $Hw$ (with $w$ reduced), such that
for some word $w'$ the concatenation $ww'$ is reduced, and $w\cdot w'\in H$.
To illustrate, Figure \figref{coset_and_core_graphs} shows the graphs
$\overline{\G}_{X}\left(H\right)$ and $\G_{X}\left(H\right)$ for
$H=\langle x_{1}x_{2}x_{1}^{-3},x_{1}^{\;2}x_{2}x_{1}^{-2}\rangle\leq\F_{2}$.
Note that the graph $\overline{\G}_{X}\left(H\right)$ is $2k$-regular:
every vertex has exactly one outgoing $j$-edge and one incoming $j$-edge,
for every $1\le j\le k$. Every vertex of $\G_{X}\left(H\right)$
has \emph{at most} one outgoing $j$-edge and\emph{ at most} one incoming
$j$-edge, for every $1\le j\le k$.

\begin{figure}[h]
\begin{centering}
\begin{center}
\xy
(30,60)*+{\otimes}="s0";
(30,40)*+{\bullet}="s1";%
(45,30)*+{\bullet}="s2";%
(30,20)*+{\bullet}="s3";%
{\ar^{1} "s0";"s1"};%
{\ar^{2} "s1";"s2"};%
{\ar^{1} "s3";"s2"};%
{\ar^{1} "s1";"s3"};%
{\ar@(dl,dr)_{2} "s3";"s3"};%
(10,40)*+{\bullet}="t0";%
(60,40)*+{\bullet}="t1";%
(60,20)*+{\bullet}="t2";%
{\ar^{2} "t0";"s1"};%
{\ar^{1} "s2";"t1"};%
{\ar^{2} "s2";"t2"};%
{\ar@{..} "s0"; (20,60)*{}};%
{\ar@{..} "s0"; (30,70)*{}};%
{\ar@{..} "s0"; (40,60)*{}};%
{\ar@{..} "t0"; (10,50)*{}};%
{\ar@{..} "t0"; (0,40)*{}};%
{\ar@{..} "t0"; (10,30)*{}};%
{\ar@{..} "t1"; (52.5,45)*{}};%
{\ar@{..} "t1"; (67.5,45)*{}};%
{\ar@{..} "t1"; (67.5,35)*{}};%
{\ar@{..} "t2"; (67.5,25)*{}};%
{\ar@{..} "t2"; (67.5,15)*{}};%
{\ar@{..} "t2"; (52.5,15)*{}};%
{\ar@{=>} (76,40)*{}; (90,40)*{}};
(100,60)*+{\otimes}="m0";%
(100,40)*+{\bullet}="m1";%
(115,30)*+{\bullet}="m2";%
(100,20)*+{\bullet}="m3";%
{\ar^{1} "m0";"m1"};%
{\ar^{2} "m1";"m2"};%
{\ar^{1} "m3";"m2"};%
{\ar^{1} "m1";"m3"};%
{\ar@(dl,dr)_{2} "m3";"m3"};%
\endxy
\par\end{center}
\par\end{centering}

\caption{$\overline{\G}_{X}\left(H\right)$ and $\G_{X}\left(H\right)$ for
$H=\langle x_{1}x_{2}x_{1}^{-3},x_{1}^{\;2}x_{2}x_{1}^{-2}\rangle\leq\F_{2}$.
The Schreier coset graph $\overline{\G}_{X}\left(H\right)$ is the
infinite graph on the left (the dotted lines represent infinite $4$-regular
trees). The basepoint ``$\otimes$'' corresponds to the trivial
coset $H$, the vertex below it corresponds to the coset $Hx_{1}$,
the one further down corresponds to $Hx_{1}^{\;2}=Hx_{1}x_{2}x_{1}^{-1}$,
etc. The core graph $\G_{X}\left(H\right)$ is the finite graph on
the right, which is obtained from $\overline{\G}_{X}\left(H\right)$
by omitting all vertices and edges that are not traced by reduced
closed paths around the basepoint.}

\label{fig:coset_and_core_graphs} 
\end{figure}

If $\Gamma$ is a directed pointed graph labeled by some set $X$,
paths in $\Gamma$ correspond to words in $\mathbf{F}\left(X\right)$
(the free group generated by $X$). For instance, the path (from left
to right)
\[
\xymatrix@C=35pt{\bullet\ar[r]^{x_{2}} & \bullet\ar[r]^{x_{2}} & \bullet\ar[r]^{x_{1}} & \bullet & \bullet\ar[l]_{x_{2}}\ar[r]^{x_{3}} & \bullet & \bullet\ar[l]_{x_{1}}}
\]
corresponds to the word $x_{2}^{\;2}x_{1}x_{2}^{-1}x_{3}x_{1}^{-1}$.
The set of all words obtained from closed paths around the basepoint
in $\Gamma$ is a subgroup of $\F\left(X\right)$ which we call the
\emph{labeled fundamental group }of $\Gamma$, and denote by $\pi_{1}^{X}\left(\Gamma\right)$.
Note that $\pi_{1}^{X}\left(\Gamma\right)$ need not be isomorphic
to $\pi_{1}\left(\Gamma\right)$, the standard fundamental group of
$\Gamma$ viewed as a topological space: for example, take $\vphantom{\Big|}\Gamma=\xymatrix@1{\otimes\ar@(dl,ul)[]^{x_{1}}\ar@(dr,ur)[]_{x_{1}}}$.

However, it is not hard to show that when $\Gamma$ is a core graph,
then $\pi_{1}^{X}\left(\Gamma\right)$ \emph{is }isomorphic to $\pi_{1}\left(\Gamma\right)$
(e.g.\ \cite{MVW07}). In this case the labeling gives a canonical
identification of $\pi_{1}\left(\Gamma\right)$ as a subgroup of $\mathbf{F}\left(X\right)$.
It is an easy observation that 
\begin{equation}
\pi_{1}^{X}\left(\overline{\G}_{X}\left(H\right)\right)=\pi_{1}^{X}\left(\G_{X}\left(H\right)\right)=H\label{eq:canon_iso}
\end{equation}
This gives a one-to-one correspondence between subgroups of $\mathbf{F}\left(X\right)=\F_{k}$
and core graphs labeled by $X$. Namely, $\pi_{1}^{X}$ and $\Gamma_{X}$
are the inverses of each other in a bijection (Galois correspondence)
\begin{equation}
\left\{ {\mathrm{Subgroups}\atop \mathrm{of}\,\mathbf{F}\left(X\right)}\right\} \:{\underrightarrow{\;\Gamma_{X}\;}\atop \overleftarrow{\;\pi_{1}^{X}\;}}\:\left\{ {\mathrm{Core\, graphs}\atop \mathrm{labeled\, by}\, X}\right\} \label{eq:pi_1_gamma}
\end{equation}
Core graphs were introduced by Stallings \cite{Sta83}. Our definition
is slightly different, and closer to the one in \cite{KM02,MVW07}
in that we allow the basepoint to be of degree one, and in that our
graphs are directed and edge-labeled. We remark that it is possible
to study core graphs from a purely combinatorial point of view, as
labeled pointed connected graphs satisfying 
\begin{enumerate}
\item No two equally labeled edges originate or terminate at the same vertex.
\item Every vertex and edge are traced by some non-backtracking closed path
around the basepoint.
\end{enumerate}
Starting with this definition, every choice of an ordered basis for
$\F_{k}$ then gives a correspondence between these graphs and subgroups
of $\F_{k}$.

\medskip{}

In this paper we are mainly interested in finite core graphs, and
we now list some basic properties of these (proofs can be found in
\cite{Sta83,KM02,MVW07}).
\begin{claim}
\label{cla:core-graphs-properties} Let $H$ be a subgroup of $\F_{k}$
with an associated core graph $\G=\G_{X}\left(H\right)$. The Euler
Characteristic of a graph, denoted $\chi\left(\cdot\right)$, is the
number of vertices minus the number of edges.
\begin{enumerate}
\item \label{enu:finite-rk-graph}$\rk\left(H\right)<\infty\Longleftrightarrow\G$
is finite.
\item \label{enu:euler}$\rrk\left(H\right)=-\chi\left(\G\right)$.
\item The correspondence \eqref{pi_1_gamma} restricts to a correspondence
between $\mathfrak{sub}_{f\! g}\left(\mathbf{F}_{k}\right)$ and finite
core graphs. 
\end{enumerate}
\end{claim}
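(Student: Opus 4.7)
The plan is to leverage the Galois correspondence of equation~\eqref{eq:pi_1_gamma} together with the fact, mentioned just before the claim, that for a core graph $\Gamma$ the labeled fundamental group $\pi_1^X(\Gamma)$ coincides with the usual topological fundamental group $\pi_1(\Gamma)$. Part~(3) will then be an immediate consequence of part~(1) and \eqref{eq:pi_1_gamma}, so the real work is in (1) and (2), and these essentially share a single mechanism.

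For part~(2) (and the ``$\Leftarrow$'' direction of~(1)), assume $\Gamma = \Gamma_X(H)$ is finite. A standard topological fact says that the fundamental group of a finite connected graph $\Gamma$ is free of rank $1 - \chi(\Gamma) = |E(\Gamma)| - |V(\Gamma)| + 1$ (pick a spanning tree; each edge outside it contributes one free generator). Since $\Gamma_X(H)$ is connected and $H = \pi_1^X(\Gamma) = \pi_1(\Gamma)$ by \eqref{eq:canon_iso} and the remark above it, we get $\rk(H) = 1 - \chi(\Gamma) < \infty$. Rearranging yields (2), and the finiteness of $\rk(H)$ gives one direction of~(1).

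For the ``$\Rightarrow$'' direction of~(1), suppose $H$ is generated by reduced words $w_1,\ldots,w_r$. Build the pointed labeled graph $\Delta_0$ consisting of a wedge of $r$ cycles based at a common vertex, the $i$-th cycle spelling $w_i$ along its edges (with orientations and $X$-labels reading off the letters). Then $\pi_1^X(\Delta_0) \supseteq \{w_1,\dots,w_r\}$, so $\pi_1^X(\Delta_0) \supseteq H$; conversely every closed walk is visibly a product of the $w_i^{\pm 1}$, so $\pi_1^X(\Delta_0) = H$. Now apply Stallings foldings: while some vertex has two outgoing or two incoming $j$-edges (for some $j$), identify that pair. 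Each folding strictly reduces the number of edges, so the process terminates at a finite folded graph $\Delta$; and because a folding amounts to quotienting by an edge-identification that does not add new loops (any closed walk in $\Delta$ lifts back to a closed walk in $\Delta_0$ spelling the same word, and vice versa), the labeled fundamental group is preserved, giving $\pi_1^X(\Delta) = H$. Trimming hanging trees from $\Delta$ preserves $\pi_1^X$ as well and yields a finite core graph $\Delta'$ with $\pi_1^X(\Delta') = H$. By the bijectivity in \eqref{eq:pi_1_gamma}, $\Delta' = \Gamma_X(H)$, so $\Gamma_X(H)$ is finite as desired.

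Part~(3) follows: \eqref{eq:pi_1_gamma} is already a bijection between all subgroups of $\F_k$ and all core graphs labeled by $X$, and part~(1) identifies the finitely generated side with the finite side. The main point requiring care in this outline is the invariance of $\pi_1^X$ under folding and trimming; everything else is either a topological computation of ranks of free groups or a direct application of the Galois correspondence already in hand.
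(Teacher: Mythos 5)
The paper gives no proof here, referring to \cite{Sta83,KM02,MVW07}; your argument is the standard Stallings proof and it is correct. The only step stated more tersely than it deserves is the claim that a closed walk in the folded graph ``lifts back'' to $\Delta_0$ spelling the same word (this is the direction $\pi_1^{X}(\Delta)\subseteq\pi_1^{X}(\Delta_0)$; the other containment is the trivial push-forward). A fold is not locally injective, so a naive edge-by-edge lift can find itself at the wrong preimage of the merged vertex $\bar u$; the remedy is a detour: whenever the partial lift sits at $u=t(e)$ but the next edge of the walk is incident only to $u'=t(e')$, interpolate the excursion $e^{-1}e'$, which spells a word reducing to the identity since $e$ and $e'$ carry the same label, so that the resulting closed walk in $\Delta_0$ spells the same \emph{reduced} word, which is all that $\pi_1^{X}$ records. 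With this filled in, the invariance of the labeled fundamental group under folding (and, more easily, under trimming) is secure, and the rest of the argument --- the wedge-of-cycles construction for $(1)\Rightarrow$, the spanning-tree rank computation for $(2)$ and $(1)\Leftarrow$, and reading $(3)$ off the correspondence \eqref{pi_1_gamma} --- is exactly right.
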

Given a finite set of words $\left\{ h_{1},\ldots,h_{m}\right\} \subseteq\F\left(X\right)$
that generate a subgroup $H$, the core graph $\Gamma_{X}\left(H\right)$
can be algorithmically constructed as follows. Every $h_{i}$ corresponds
to some path with directed edges labeled by the $x_{j}$'s (we assume
the elements are given in reduced forms, otherwise we might need to
prune leaves at the end of the algorithm). Merge these $m$ paths
to a single graph (bouquet) by identifying all their $2m$ end-points
to a single vertex, which is marked as the basepoint. The labeled
fundamental group of this graph is clearly $H$. Then, as long as
there are two $j$-labeled edges with the same terminus (resp.\ origin)
for some $j$, merge the two edges and their origins (resp.\ termini).
Such a step is often referred to as \emph{Stallings folding}. It is
fairly easy to see that each folding step does not change the labeled
fundamental group of the graph, that the resulting graph is indeed
$\G_{X}\left(H\right)$, and that the order of folding has no significance.
To illustrate, we draw in Figure \figref{folding_process} a folding
process by which we obtain the core graph $\G_{X}\left(H\right)$
of $H=\langle x_{1}x_{2}x_{1}^{-3},x_{1}^{\;2}x_{2}x_{1}^{-2}\rangle\leq\F_{2}$
from the given generating set.

\begin{figure}[h]
\begin{centering}
\includegraphics[width=0.99\columnwidth]{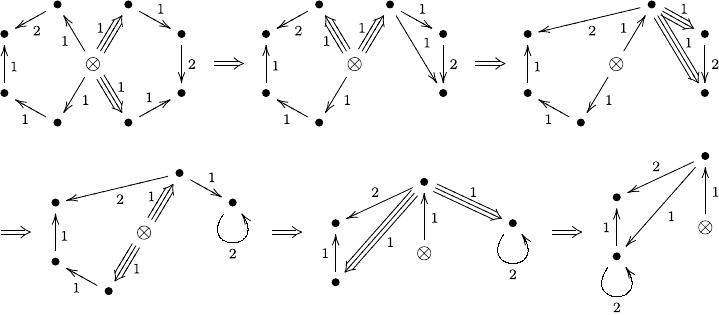}
\par\end{centering}

\caption{\label{fig:folding_process} Constructing the core graph $\G_{X}\left(H\right)$
of $H=\langle x_{1}x_{2}x_{1}^{-3},x_{1}^{\;2}x_{2}x_{1}^{-2}\rangle\leq\F_{2}$
from the given generating set. We start with the upper left graph
which contains a distinct loop at the basepoint for each (reduced)
element of the generating set. Then, at an arbitrary order, we merge
pairs of equally-labeled edges which share the same origin or the
same terminus (here we mark by triple arrows the pair of edges being
merged next). The graph at the bottom right is $\G_{X}\left(H\right)$,
as it has no equally-labeled edges sharing the same origin or terminus.}
\end{figure}

A \emph{morphism} between two core-graphs is a map that sends vertices
to vertices and edges to edges, and preserves the structure of the
core graphs. Namely, it preserves the incidence relations, sends the
basepoint to the basepoint, and preserves the directions and labels
of the edges. 

As in Claim \claref{core-graphs-properties}, each of the following
properties is either proven in (some of) \cite{Sta83,KM02,MVW07}
or an easy observation:
\begin{claim}
\label{cla:morphism-properties} Let $H,J,L\le\F_{k}$ be subgroups.
Then 
\begin{enumerate}
\item A morphism $\G_{X}\left(H\right)\to\G_{X}\left(J\right)$ exists if
and only if $H\leq J$. 
\item If a morphism $\G_{X}\left(H\right)\to\G_{X}\left(J\right)$ exists,
it is unique. We denote it by $\eta_{H\to J}^{X}$.
\item Whenever $H\le L\le J$, $\eta_{H\to J}^{X}=\eta_{L\to J}^{X}\circ\eta_{H\to L}^{X}$.%
\footnote{Points (1)-(3) can be formulated by saying that \eqref{pi_1_gamma}
is in fact an isomorphism of categories, given by the functors $\pi_{1}^{X}$
and $\Gamma_{X}$.%
}
\item If $\eta_{H\to J}^{X}$ is injective, then $H\ff J$.%
\footnote{But not vice-versa: for example, consider $\left\langle x_{1}x_{2}^{\,2}\right\rangle \ff\F_{2}$.%
}
\item Every morphism is an immersion (locally injective at the vertices). 
\end{enumerate}
\end{claim}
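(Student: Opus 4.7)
The plan is to handle the five assertions by systematically using two tools: the Galois correspondence \eqref{pi_1_gamma}, which identifies a subgroup $H$ with $\pi_1^X(\G_X(H))$, as recorded in \eqref{canon_iso}; and the structural fact that in a core graph, every vertex has at most one outgoing and at most one incoming $j$-edge for each label $j$. Since morphisms are pointed and preserve labels and orientations, everything will be controlled by path-tracing from the basepoint.

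For (1), if a morphism $\eta\colon\G_X(H)\to\G_X(J)$ exists, any closed path at the basepoint of $\G_X(H)$ maps to a closed path at the basepoint of $\G_X(J)$ reading the \emph{same} word, so $H=\pi_1^X(\G_X(H))\subseteq\pi_1^X(\G_X(J))=J$. For the converse, I plan to define $\eta$ by sending a vertex of $\G_X(H)$ reached from the basepoint via a path labeled by the reduced word $w$ to the endpoint of the $w$-labeled path starting at the basepoint of $\G_X(J)$. Well-definedness follows because any two such labels $u,w$ satisfy $uw^{-1}\in H\le J$; and the image lands in the core of $\G_X(J)$, not merely in $\overline{\G}_X(J)$, because each vertex of $\G_X(H)$ lies on a reduced closed path labeled by some $u\in H\le J$, which then traces a reduced closed path through the image in $\G_X(J)$. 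The same path-tracing principle gives (2): a morphism is forced vertex-by-vertex by its value on the basepoint, and edge-by-edge by the label/direction-preservation. Assertion (3) is then automatic from (2), since $\eta_{L\to J}^{X}\circ\eta_{H\to L}^{X}$ is a morphism $\G_X(H)\to\G_X(J)$ and must therefore coincide with $\eta_{H\to J}^{X}$. Assertion (5) follows because two distinct edges at a common vertex of a core graph must differ in either label or orientation at that vertex, a distinction respected by any morphism.

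The main content lies in (4). Assuming $\eta=\eta_{H\to J}^{X}$ is injective, identify $\G_X(H)$ with its image, an embedded subgraph of $\G_X(J)$, and invoke the standard spanning-tree recipe for a free basis of the fundamental group of a core graph. Choose a spanning tree $T_H$ of $\G_X(H)$ and extend it to a spanning tree $T_J$ of $\G_X(J)$; since $T_H$ already spans the vertex set of $\G_X(H)$, such a $T_J$ can be chosen with $T_J\cap E(\G_X(H))=T_H$. For a non-tree edge $e$, the associated basis element is the word read along the unique tree path from the basepoint to the origin of $e$, across $e$, then back along the tree to the basepoint. Because $T_H\subseteq T_J$, the tree path inside $\G_X(H)$ is literally the tree path inside $\G_X(J)$, so the basis element attached to a non-tree edge of $\G_X(H)$ is the \emph{same} element of $\F_k$ in both constructions. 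The resulting basis of $J$ is therefore a basis of $H$ together with the basis elements coming from non-tree edges of $\G_X(J)$ outside $\G_X(H)$, exhibiting $H\ff J$. The most delicate point I anticipate is making (1)$\Leftarrow$ and (4) fully rigorous: in (1), verifying that the constructed map really lands in the core rather than merely in the Schreier graph; and in (4), ensuring that the two spanning-tree bases of $H$ are literally equal (not merely conjugate), so that the inclusion of bases genuinely witnesses $H\ff J$.
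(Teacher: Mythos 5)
Your proof is correct. The paper does not actually prove this claim; it cites \cite{Sta83,KM02,MVW07} and calls the assertions ``easy observations.'' Your argument is the standard one from those references: (1)--(3) and (5) follow from the Galois correspondence \eqref{pi_1_gamma} and the fact that a core graph has at most one incoming and one outgoing $j$-edge at each vertex, so morphisms are determined by path-lifting from the basepoint; for the ``image lands in the core'' step in (1), the point is that a non-backtracking path in a core graph (or in $\overline{\G}_X(J)$) reads a reduced word, and conversely, so a reduced closed path in $\G_X(H)$ maps to a reduced closed path in $\overline{\G}_X(J)$ and hence lies in the core. For (4), the delicate point you flagged is genuinely fine and worth spelling out once: since $T_H\subseteq T_J$ and both are trees, the unique $T_J$-path between any two vertices of $T_H$ must coincide with the unique $T_H$-path (a subtree contains all its own geodesics), so the word read for a non-tree edge $e\in E(\G_X(H))\setminus T_H$ is literally the same whether computed in $T_H$ or in $T_J$; moreover $T_J\cap E(\G_X(H))=T_H$ is automatic rather than a choice, because $T_H$ already spans $V(\G_X(H))$, so adding any further edge of $\G_X(H)$ to $T_J$ would close a cycle. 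Thus the spanning-tree basis of $H$ is a genuine subset of the spanning-tree basis of $J$, exhibiting $H\ff J$.
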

A special role is played by \emph{surjective} morphisms of core graphs:
\begin{defn}
\label{def:quotient}Let $H\le J\le\F_{k}$. Whenever $\eta_{H\to J}^{X}$
is surjective, we say that \emph{$\G_{X}\left(H\right)$ covers $\G_{X}\left(J\right)$}
or that \emph{$\G_{X}\left(J\right)$ is a quotient of $\G_{X}\left(H\right)$}.
We indicate this by $\G_{X}\left(H\right)\th\G_{X}\left(J\right)$.
As for the groups, we say that \emph{$H$ $X$-covers $J$} and denote
this by $H\covers J$\emph{.}
\end{defn}
By ``surjective'' we mean surjective on both vertices and edges.
Note that we use the term ``covers'' even though in general this
is \emph{not} a topological covering map (a morphism between core
graphs is always locally injective at the vertices, but it need not
be locally bijective). In Section \secref{Random-Covers} we do study
topological covering maps, and we reserve the term ``coverings''
for these.

For instance, $H=\langle x_{1}x_{2}x_{1}^{-3},x_{1}^{\;2}x_{2}x_{1}^{-2}\rangle\le\F_{k}$
$X$-covers the group $J=\langle x_{2},x_{1}^{\;2},x_{1}x_{2}x_{1}\rangle$,
the corresponding core graphs of which are the leftmost and rightmost
graphs in Figure \figref{quotient-graph}. As another example, a core
graph $\G$ $X$-covers $\G_{X}\left(\F_{k}\right)$ (which is merely
a wedge of $k$ loops) if and only if it contains edges of all $k$
labels.

As implied by the notation, the relation $H\covers J$ indeed depends
on the given basis\emph{ $X$ }of\emph{ $\F_{k}$}. For example, if
$H=\langle x_{1}x_{2}\rangle$ then $H\covers\F_{2}$. However, for
$Y=\left\{ x_{1}x_{2},x_{2}\right\} $, $H$ does not\emph{ }$Y$-cover
$\F_{2}$, as $\G_{Y}\left(H\right)$ consists of a single vertex
and a single loop and has no quotients apart from itself.

It is easy to see that the relation ``$\covers$'' indeed constitutes
a partial ordering of the set of subgroups of $\F_{k}$. We make a
few other useful observations:
\begin{claim}
\label{cla:cover-properties}Let $H,J,L\le\F_{k}$ be subgroups. Then 
\begin{enumerate}
\item Whenever $H\le J$ there exists an intermediate subgroup $M$ such
that $H\covers M\ff J$. 
\item If one adds the condition that $\Gamma_{X}\left(M\right)$ embeds
in $\Gamma_{X}\left(J\right)$, then this $M$ is unique.
\item \label{enu:Cover-insideCover}If $H\covers J$ and $H\covers L\le J$,
then $L\covers J$.
\item \label{enu:O_X(H)-is-finite}If $H$ is finitely generated then it
$X$-covers only a finite number of groups. In particular, the poset
$\left(\mathfrak{sub}_{f\! g}\left(\mathbf{F}_{k}\right),\covers\right)$
is locally finite.
\end{enumerate}
\end{claim}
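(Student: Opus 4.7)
The plan is to handle the four items in turn, exploiting the functorial dictionary between subgroups and core graphs set up in Claims \claref{core-graphs-properties}--\claref{morphism-properties}. For (1), given $H\le J$ I would look at the image $\Delta$ of the unique morphism $\eta_{H\to J}^{X}:\G_{X}(H)\to\G_{X}(J)$. The key sub-claim is that $\Delta$ is itself a core graph: the ``folded'' condition is inherited from $\G_{X}(J)$ (a subgraph of a folded graph is folded), and every vertex/edge of $\Delta$ is the image of a vertex/edge of $\G_{X}(H)$ traced by a reduced closed path at the basepoint; since every morphism of core graphs is an immersion (Claim \claref{morphism-properties}(5)), the image path is still reduced, so $\Delta$ satisfies the combinatorial axioms of a core graph. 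By the Galois correspondence \eqref{pi_1_gamma}, $\Delta=\G_{X}(M)$ for a unique $M\le\F_{k}$. The canonical factorisation $\G_{X}(H)\th\Delta\hookrightarrow\G_{X}(J)$ gives $H\covers M$ and an injection $\G_{X}(M)\hookrightarrow\G_{X}(J)$, so Claim \claref{morphism-properties}(4) yields $M\ff J$.

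For (2), suppose $M'$ also satisfies $H\covers M'\ff J$ with $\G_{X}(M')\hookrightarrow\G_{X}(J)$. By the uniqueness and composition clauses (Claim \claref{morphism-properties}(2)--(3)), the morphisms $\eta_{H\to M'}^{X}$ (surjective) and $\eta_{M'\to J}^{X}$ (injective) must compose to $\eta_{H\to J}^{X}$. Thus the embedded image of $\G_{X}(M')$ in $\G_{X}(J)$ coincides with the image $\Delta$ of $\eta_{H\to J}^{X}$, forcing $\G_{X}(M')=\G_{X}(M)$ and hence $M'=M$ by bijectivity of the correspondence.

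Part (3) is a one-line diagram chase: from $H\le L\le J$, Claim \claref{morphism-properties}(3) gives $\eta_{H\to J}^{X}=\eta_{L\to J}^{X}\circ\eta_{H\to L}^{X}$; since the left-hand side is surjective (this is $H\covers J$), the right-hand factor $\eta_{L\to J}^{X}$ is surjective too, i.e.\ $L\covers J$.

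For (4), I would argue that if $H$ is finitely generated then $\G_{X}(H)$ is finite (Claim \claref{core-graphs-properties}(1)). Any $M$ with $H\covers M$ is determined by the fibres of the surjection $\eta_{H\to M}^{X}$ on vertices and edges, hence by an equivalence relation on the finite set $V(\G_{X}(H))\sqcup E(\G_{X}(H))$; there are only finitely many such relations. (One can equally describe each quotient as arising from iteratively collapsing a pair of identically labelled co-initial/co-terminal edges, i.e.\ Stallings folding, starting from a quotient graph, which terminates after finitely many steps on a finite graph.) Local finiteness of $\covers$ on f.g.\ subgroups then follows because $[H,J]_{\Xcov}\subseteq\{M:H\covers M\}$. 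The only step I expect to require care is the core-graph verification in (1); once that is in place, everything reduces to routine manipulations inside the diagram $\G_{X}(H)\to\G_{X}(M)\to\G_{X}(J)$.
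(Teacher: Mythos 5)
Your proof follows the same route as the paper's: factorize $\eta_{H\to J}^{X}$ as a surjection onto its image followed by an embedding, verify the image is a core graph and take $M$ to be the corresponding subgroup, then read off (2)–(4) from uniqueness of the factorization, the composition rule, and finiteness of $\G_{X}(H)$. The only difference is that you spell out the check that the image is a core graph (via the immersion property preserving reducedness of paths), a step the paper labels as easy to see; otherwise the arguments coincide.
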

\begin{proof}
Point (1) follows from the factorization of the morphism $\eta_{H\to J}^{X}$
to a surjection followed by an embedding. Indeed, it is easy to see
that the image of $\eta_{H\to J}^{X}$ is a sub-graph of $\G_{X}\left(J\right)$
which is in itself a core graph. Namely, it contains no ``hanging
trees'' (edges and vertices not traced by reduced paths around the
basepoint). Let $M=\pi_{1}^{X}\left(\mathrm{im}\,\eta_{H\to J}^{X}\right)$
be the subgroup corresponding to this sub-core-graph. (1) now follows
from points (1) and (4) in Claim \claref{morphism-properties}. Point
(2) follows from the uniqueness of such factorization of a morphism.
Point (3) follows from the fact that if $\eta_{H\to J}^{X}=\eta_{L\to J}^{X}\circ\eta_{H\to L}^{X}$
is surjective then so is $\eta_{L\to J}^{X}$. Point (4) follows from
the fact that $\G_{X}\left(H\right)$ is finite (Claim \claref{core-graphs-properties}\enuref{finite-rk-graph})
and thus has only finitely many quotients, and each quotient correspond
to a single group (by \eqref{pi_1_gamma}).
\end{proof}
In \cite{MVW07}, the set of $X$-quotients of $H$ 
\begin{equation}
\XF{H}=\left\{ J\,\middle|\, H\covers J\right\} \label{eq:def_of_O_H}
\end{equation}
is called the \emph{$X$-fringe} of $H$. Claim \claref{cover-properties}\enuref{O_X(H)-is-finite}
states in this terminology that for every $H\fg\F_{k}$ (and every
basis $X$), $\left|\XF{H}\right|<\infty$. Note that $\XF{H}$ always
contains the supremum of its elements, namely the group generated
by the elements of $X$ which label edges in $\G_{X}\left(H\right)$
(which is $\pi_{1}^{X}\left(\mathrm{im}\,\eta_{H\rightarrow\F_{k}}^{X}\right)$).
(We remark that in the special case of $H=\left\langle w\right\rangle $
for some $w\in\F_{k}$, the set $\XF{\left\langle w\right\rangle }$
appears also in \cite{turner1996test} and, in a very different language,
in the aforementioned \cite{Nic94}.)

It is easy to see that quotients of $\Gamma_{X}\left(H\right)$ are
determined by the partition they induce of the vertex set $V\left(\G_{X}\left(H\right)\right)$.
However, not every partition $P$ of $V\left(\G_{X}\left(H\right)\right)$
corresponds to a quotient core-graph: in the resulting graph, which
we denote by $\nicefrac{\Gamma_{X}\left(H\right)}{P}$, two distinct
$j$-edges may have the same origin or the same terminus. Then again,
when a partition $P$ of $V\left(\G_{X}\left(H\right)\right)$ yields
a quotient which is not a core-graph, we can perform Stallings foldings
(as demonstrated in Figure \figref{folding_process}) until we obtain
a core graph. Since Stallings foldings do not affect $\pi_{1}^{X}$,
the core graph we obtain in this manner is $\Gamma_{X}\left(J\right)$,
where $J=\pi_{1}^{X}\left(\nicefrac{\Gamma_{X}\left(H\right)}{P}\right)$.
The resulting partition $\bar{P}$ of $V\left(\G_{X}\left(H\right)\right)$
(as the fibers of $\eta_{H\rightarrow J}^{X}$) is the finest partition
of $V\left(\G_{X}\left(H\right)\right)$ which gives a quotient core-graph
and which is still coarser than $P$. We illustrate this in Figure
\figref{quotient-graph}.

\begin{figure}[h]
\noindent \begin{centering}
\begin{minipage}[t]{0.9\columnwidth}%
\noindent \begin{center}
\begin{center}
\xy 
(0,35)*+{\otimes}="m0"+(-3,0)*{\scriptstyle v_1};%
(20,35)*+{\bullet}="m1"+(3,0)*{\scriptstyle v_2};%
(20,15)*+{\bullet}="m2"+(3,0)*{\scriptstyle v_3};%
(0,15)*+{\bullet}="m3"+(-3,0)*{\scriptstyle v_4};%
{\ar^{1} "m0";"m1"};%
{\ar^{2} "m1";"m2"};%
{\ar^{1} "m3";"m2"};%
{\ar^{1} "m1";"m3"};%
{\ar@(dl,dr)_{2} "m3";"m3"};%
(40,25)*+{\otimes}="t0"+(-4,4)*{\scriptstyle \{v_1,v_4\}};%
(60,40)*+{\bullet}="t1"+(4,2)*{\scriptstyle \{v_2\}};%
(60,10)*+{\bullet}="t2"+(4,-2)*{\scriptstyle \{v_3\}};%
{\ar@/^1pc/^{1} "t0";"t1"};%
{\ar^{2} "t1";"t2"};%
{\ar^{1} "t0";"t2"};%
{\ar@/^1pc/^{1} "t1";"t0"};%
{\ar@(dl,dr)_{2} "t0";"t0"};%
(80,25)*+{\otimes}="t0"+(-4,4)*{\scriptstyle \{v_1,v_4\}};%
(100,25)*+{\bullet}="t1"+(4,4)*{\scriptstyle \{v_2,v_3\}};%
{\ar@/^1pc/^{1} "t0";"t1"};%
{\ar@/^1pc/^{1} "t1";"t0"};%
{\ar@(dl,dr)_{2} "t0";"t0"};%
{\ar@(dl,dr)_{2} "t1";"t1"};%
\endxy 
\par\end{center}
\par\end{center}%
\end{minipage}
\par\end{centering}

\caption{\label{fig:quotient-graph} The left graph is the core graph $\G_{X}\left(H\right)$
of $H=\left\langle x_{1}x_{2}x_{1}^{-3},x_{1}^{\;2}x_{2}x_{1}^{-2}\right\rangle \leq\F_{2}$.
Its vertices are denoted by $v_{1},\ldots,v_{4}$. The graph in the
middle is the quotient $\nicefrac{\Gamma_{X}\left(H\right)}{P}$ corresponding
to the partition $P=\left\{ \left\{ v_{1},v_{4}\right\} ,\left\{ v_{2}\right\} ,\left\{ v_{3}\right\} \right\} $.
This is not a core graph as there are two $1$-edges originating at
$\left\{ v_{1},v_{4}\right\} $. In order to obtain a core quotient-graph,
we use the Stallings folding process (illustrated in Figure \figref{folding_process}).
The resulting core graph, $\Gamma_{X}\left(\pi_{1}^{X}\left(\nicefrac{\Gamma_{X}\left(H\right)}{P}\right)\right)$,
is shown on the right and corresponds to the partition $\bar{P}=\left\{ \left\{ v_{1},v_{4}\right\} ,\left\{ v_{2},v_{3}\right\} \right\} $.}
\end{figure}

Thus, there is sense in examining the quotient of a core graph $\G$
``generated'' by some partition $P$ of its vertex set, namely,
$\Gamma_{X}\left(\pi_{1}^{X}\left(\nicefrac{\Gamma}{P}\right)\right)$.
The most interesting case is that of the ``simplest'' partitions:
those which identify only a single pair of vertices. Before looking
at these, we introduce a measure for the complexity of partitions:
if $P\subseteq2^{\mathcal{X}}$ is a partition of some set $\mathcal{X}$,
let 
\begin{equation}
\left\Vert P\right\Vert \overset{{\scriptscriptstyle def}}{=}\left|\mathcal{X}\right|-\left|P\right|=\sum_{B\in P}\left(\left|B\right|-1\right).\label{eq:Partition_norm}
\end{equation}
Namely, $\left\Vert P\right\Vert $ is the number of elements in the
set minus the number of blocks in the partition. For example, $\left\Vert P\right\Vert =1$
iff $P$ identifies only a single pair of elements. It is not hard
to see that $\left\Vert P\right\Vert $ is also the minimal number
of identifications one needs to make in $\mathcal{X}$ in order to
obtain the equivalence relation $P$. 
\begin{defn}
\label{def:imme-quot} Let $\G$ be a core graph and let $P$ be a
partition of $V\left(\G\right)$ with $\left\Vert P\right\Vert =1$,
i.e.\ having a single non-trivial block, of size two. Let $\Delta$
be the core graph generated from $\Gamma$ by $P$. We then say that
$\Delta$ is an \emph{immediate quotient} of $\G$.
\end{defn}
Alternatively, we say that $\Delta$ is \emph{generated by identifying
a single pair} of vertices of $\G$. For instance, the rightmost core
graph in Figure \figref{quotient-graph} is an immediate quotient
of the leftmost one.

The main reason that immediate quotients are interesting is their
algebraic significance. Let $H,J\le\F_{k}$ with $\G=\G_{X}\left(H\right),\Delta=\G_{X}\left(J\right)$
their core graphs, and assume that $\Delta$ is an immediate quotient
of $\G$ obtained by identifying the vertices $u,v\in V\left(\G\right)$.
Now let $w_{u},w_{v}\in\F_{k}$ be the words corresponding to some
paths $p_{u},p_{v}$ in $\G$ from the basepoint to $u$ and $v$
respectively (note that these paths are not unique). It is not hard
to see that identifying $u$ and $v$ has the same effect as adding
the word $w=w_{u}w_{v}^{-1}$ to $H$ and considering the generated
group. Namely, that $J=\langle H,w\rangle$.

\begin{center}
\begin{center}
\xy 
(0,0)*{\otimes}="bp"+(-8,8)*{\G};%
(5,0)*\xycircle(30,15){--};
(24,4)*{\bullet}="u"  +(2,2)*{u};%
(20,-6)*{\bullet}="v" +(2,-2)*{v};%
"bp"; "u" **\crv{(10,20) & (15,-20)} ?(.43)*\dir{>>}; 
(6,8)*{\scriptstyle p_u}; "bp"; "v" **\crv{(5,-10) & (15,10)} ?(.43)*\dir{>>}; 
(6,-6)*{\scriptstyle p_v}; (-60,0)*+{}="dummy";
(0,17)*+{}="dummy";
(0,-17)*+{}="dummy";
\endxy
\par\end{center}
\par\end{center}

The relation of immediate quotients gives the set of finite core graphs
(with edges labeled by $1,\ldots,k$) the structure of a directed
acyclic graph (\emph{DAG})%
\footnote{that is, a directed graph with no directed cycles.%
}. This DAG was first introduced in \cite{Pud14a}, and is denoted
by $\D_{k}$. The set of vertices of $\D_{k}$ consists of the aforementioned
core graphs, and its directed edges connect every core graph to its
immediate quotients. Every ordered basis $X=\left\{ x_{1},\ldots,x_{k}\right\} $
of $\F_{k}$ determines a one-to-one correspondence between the vertices
of this graph and $\mathfrak{sub}_{f\! g}\left(\mathbf{F}_{k}\right)$.

In the case of finite core graphs, $\Delta$ is a quotient of $\G$
if and only if $\Delta$ is reachable from $\G$ in $\D_{k}$ (that
is, there is a directed path from $\Gamma$ to $\Delta$). In other
words, if $H\fg\F_{k}$ then $H\covers J$ iff $\G_{X}\left(J\right)$
can be obtained from $\G_{X}\left(H\right)$ by a finite sequence
of immediate quotients. Thus, for any $H\fg\F_{k}$, the subgraph
of $\D_{k}$ induced by the descendants of $\G_{X}\left(H\right)$
consists of all quotients of $\G_{X}\left(H\right)$, i.e.\ of all
(core graphs corresponding to) elements of $\XF{H}$. By Claim \claref{cover-properties}\enuref{O_X(H)-is-finite},
this subgraph is finite. In Figure \figref{commutator-fringe} we
draw the subgraph of $\D_{k}$ consisting of all quotients of $\G_{X}\left(H\right)$
when $H=\langle x_{1}x_{2}x_{1}^{-1}x_{2}^{-1}\rangle$. The edges
of this subgraph (i.e.\ immediate quotients) are denoted by the dashed
arrows in the figure.

\begin{figure}[h]
\begin{centering}
\includegraphics{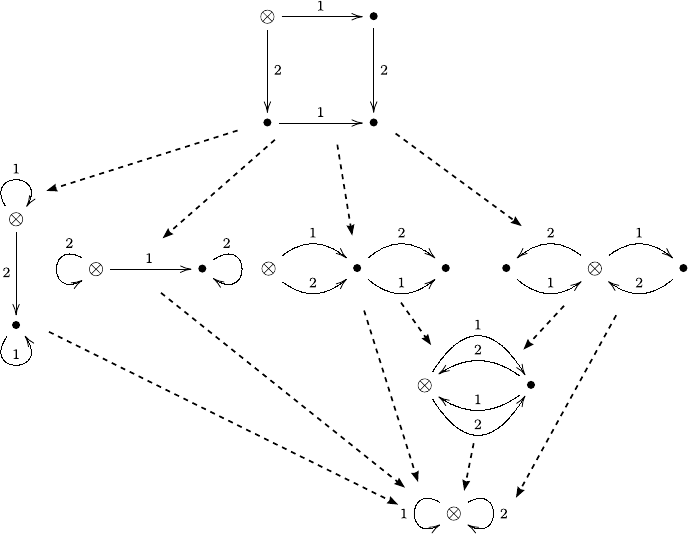}
\par\end{centering}

\caption{The subgraph of $\D_{k}$ induced by $\XF{H}$, that is, all quotients
of the core graph $\G=\G_{X}\left(H\right)$, for $H=\langle x_{1}x_{2}x_{1}^{-1}x_{2}^{-1}\rangle$.
The dashed arrows denote immediate quotients, i.e.\ quotients generated
by merging a single pair of vertices. $\G$ has exactly seven quotients:
itself, four immediate quotients, and two quotients at distance $2$.}

\label{fig:commutator-fringe} 
\end{figure}

It is now natural to define a distance function between a finite core
graph and each of its quotients: 
\begin{defn}
\label{def:distance} Let $H,J\fg\F_{k}$ be subgroups such that $H\covers J$,
and let $\G=\G_{X}\left(H\right)$, $\Delta=\G_{X}\left(J\right)$
be the corresponding core graphs. We define the \emph{$X$-distance}
between $H$ and $J$, denoted $\rho_{X}\left(H,J\right)$ or $\rho\left(\G,\Delta\right)$,
to be the shortest length of a directed path from $\G$ to $\Delta$
in $\D_{k}$. 
\end{defn}
In other words, $\rho_{X}\left(H,J\right)$ is the length of the shortest
series of immediate quotients that yields $\Delta$ from $\Gamma$.
There is another useful equivalent definition for the $X$-distance.
To see this, assume that $\Gamma'$ is generated from $\Gamma$ by
the partition $P$ of $V\left(\Gamma\right)$ and let $\eta:\Gamma\twoheadrightarrow\Gamma'$
be the morphism. For every $x,y\in V\left(\Gamma'\right)$, let $x'\in\eta^{-1}\left(x\right),y'\in\eta^{-1}\left(y\right)$
be arbitrary vertices in the fibers, and let $P'$ be the partition
of $V\left(\Gamma\right)$ obtained from $P$ by identifying $x'$
and $y'$. It is easy to see that the core graph generated from $\Gamma'$
by identifying $x$ and $y$ is the same as the one generated by $P'$
from $\Gamma$. From these considerations we obtain that 
\begin{equation}
\rho_{X}\left(H,J\right)=\min\left\{ \left\Vert P\right\Vert \,\middle|\,{P\mbox{ is a partition of }V\left(\Gamma_{X}\left(H\right)\right)\atop \mbox{such that }\pi_{1}^{X}\left(\nicefrac{\Gamma_{X}\left(H\right)}{P}\right)=J}\right\} .\label{eq:rho_from_partition}
\end{equation}

For example, if $\Delta$ is an immediate quotient of $\Gamma$ then
$\rho_{X}\left(H,J\right)=\rho\left(\Gamma,\Delta\right)=1$. For
$H=\langle x_{1}x_{2}x_{1}^{-1}x_{2}^{-1}\rangle$, $\G_{X}\left(H\right)$
has four quotients at distance $1$ and two at distance $2$ (see
Figure \figref{commutator-fringe}).

As mentioned earlier, by merging a single pair of vertices of $\G_{X}\left(H\right)$
(and then folding) we obtain the core graph of a subgroup $J$ obtained
from $H$ by adding some single generator (thought not every element
of $\F_{k}$ can be added in this manner). Thus, by taking an immediate
quotient, the rank of the associated subgroup increases at most by
1 (in fact, it may also stay unchanged or even decrease). This implies
that whenever $H\covers J$, one has 
\begin{equation}
rk\left(J\right)-rk\left(H\right)~~\le~~\rho_{X}\left(H,J\right)\label{eq:rk-rk_le_rho}
\end{equation}
In \cite{Pud14a} (Lemma 3.3), the distance is bounded from above
as well:
\begin{claim}
\label{cla:bounds_for_rho} Let $H,J\fg\F_{k}$ such that $H\covers J$.
Then 
\[
rk\left(J\right)-rk\left(H\right)~~\le~~\rho_{X}\left(H,J\right)~~\le~~rk\left(J\right)
\]

\end{claim}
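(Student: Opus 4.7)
The lower bound $\rk(J)-\rk(H)\le\rho_X(H,J)$ is exactly \eqref{rk-rk_le_rho}, so my work will be entirely on the upper bound $\rho_X(H,J)\le\rk(J)$. I plan to prove this by induction on $\rk(J)$. The base case $\rk(J)=0$ forces $J=\{1\}$: then $\G_X(J)$ is a single vertex with no edges, and since $\eta:=\eta_{H\to J}^X$ is surjective on edges, $\G_X(H)$ also has no edges, so $H=\{1\}$ and $\rho_X(H,J)=0$ as required.

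For the inductive step, the plan is to exhibit an intermediate subgroup $J'$ with $H\covers J'\covers J$, $\rk(J')=\rk(J)-1$, and $\rho_X(J',J)=1$. Subadditivity of $\rho_X$ along chains of $X$-coverings, together with the inductive hypothesis applied to $H\covers J'$, then gives $\rho_X(H,J)\le\rho_X(H,J')+\rho_X(J',J)\le(\rk(J)-1)+1=\rk(J)$. To build such a $J'$, I first fix a spanning tree $T_J\subseteq\G_X(J)$ and lift it through the immersion $\eta$ (Claim \claref{morphism-properties}(5)) to a canonical subtree $\tilde T_J\subseteq\G_X(H)$ rooted at the basepoint, with $\eta|_{\tilde T_J}$ a graph-isomorphism onto $T_J$; more generally, $\eta^{-1}(T_J)$ is a forest whose connected components embed as subtrees of $T_J$, because a connected immersion into a tree is injective. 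Next, I pick any non-tree edge $e$ of $\G_X(J)$ (of which there are $\rk(J)\ge 1$) together with a preimage $\tilde e\in\G_X(H)$, and ``split'' an endpoint $v$ of $e$ into two vertices $v',v''$, distributing the edges at $v$ -- and correspondingly the fiber $\eta^{-1}(v)$ in $\G_X(H)$ -- between $v'$ and $v''$ in a manner dictated by $\tilde e$ and the local structure of $\eta$. Let $\Delta$ denote the resulting core graph. By construction $\Delta$ has one more vertex than $\G_X(J)$ but the same number of edges, so $\widetilde{rk}(\Delta)=\widetilde{rk}(\G_X(J))-1$; identifying $v'$ with $v''$ recovers $\G_X(J)$ as an immediate quotient of $\Delta$; and the split is designed so that $\eta$ factors through a surjective morphism $\G_X(H)\twoheadrightarrow\Delta$, giving $H\covers J':=\pi_1^X(\Delta)$.

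The main obstacle will be verifying the \emph{legitimacy} of the split: that $\Delta$ is a connected core graph (in particular, no hanging subtrees are created), and that the preimages of $v$ in $\G_X(H)$ genuinely partition between those mapping to $v'$ and those mapping to $v''$ so that the factorization $\G_X(H)\twoheadrightarrow\Delta$ is well-defined. A poor choice of $e$ or $v$ can disconnect $\Delta$ or force both split vertices to absorb the same preimage vertex. My plan to handle this is a case analysis on the local structure of $\eta$ near the endpoints of non-tree edges: if the natural split at $v$ fails, re-choose $e$, swap source and target, or use the connectedness of $\G_X(H)$ together with the forest structure of $\eta^{-1}(T_J)$ to locate an admissible split. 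The crux I expect to confront is showing that \emph{some} non-tree edge always admits an admissible split; this I anticipate will follow from a counting/Euler-characteristic comparison between the fiber sizes of $\eta$ and the tree-plus-non-tree decomposition of $\G_X(J)$, using that $\eta$ is not an isomorphism when $\rho_X(H,J)\ge 1$.
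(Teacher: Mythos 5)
Your lower-bound argument is just \eqref{rk-rk_le_rho} and is fine, as is the base case. The gap is in the inductive step: the intermediate subgroup $J'$ you require --- with $H\covers J'\covers J$, $\rk\left(J'\right)=\rk\left(J\right)-1$ and $\rho_{X}\left(J',J\right)=1$ --- simply need not exist, and no choice of non-tree edge, endpoint, or split can produce it. Take $H=\left\langle x_{1}x_{2}x_{1}^{-1}x_{2}^{-1}\right\rangle \leq\F_{2}$ and $J=\F_{2}$, so $\G_{X}\left(H\right)$ is a four-cycle, $\G_{X}\left(J\right)$ is a single vertex carrying two loops, $\rk\left(J\right)=2$, and (see Figure~\ref{fig:commutator-fringe}) $\rho_{X}\left(H,J\right)=2$. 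From the seven quotients of $\G_{X}\left(H\right)$ listed in the worked computation at the end of Section~\ref{sec:Random-Covers}, the only rank-$1$ member of $\XF{H}$ is $H$ itself, and $\rho_{X}\left(H,\F_{2}\right)=2\neq1$; so no admissible $J'$ exists. In terms of your construction: the only way to split the single vertex of $\G_{X}\left(J\right)$ into a connected core graph $\Delta$ of rank $1$ is to route both loops as edges between $v'$ and $v''$, and one verifies directly that none of the four resulting labelled two-cycles admits a surjective morphism from the four-cycle $\G_{X}\left(H\right)$ (at some vertex the image lacks the required outgoing or incoming edge). The ``main obstacle'' you flagged is therefore not a technicality to be massaged away by re-choosing $e$; it is precisely where the strategy breaks.

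The paper does not reprove Claim~\ref{cla:bounds_for_rho}; the upper bound is cited as Lemma~11 of \cite{Pud11}. If you want to supply a proof, a cleaner route is global rather than inductive on $\rk\left(J\right)$: by \eqref{rho_from_partition} it suffices to exhibit a \emph{single} partition $P$ of $V\left(\G_{X}\left(H\right)\right)$ with $\left\Vert P\right\Vert \le\rk\left(J\right)$ and $\pi_{1}^{X}\left(\nicefrac{\G_{X}\left(H\right)}{P}\right)=J$, and the identifications one wants to make lie entirely inside the basepoint fiber $\left(\eta_{H\to J}^{X}\right)^{-1}\left(\otimes\right)$, one for each element of a generating set of $J$ of size $\rk\left(J\right)$. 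In the example above, identifying $\otimes$ with the two vertices $u_{1},u_{2}$ reached from $\otimes$ along the $x_{1}$- and $x_{2}$-edges of $\G_{X}\left(H\right)$ already yields $\F_{2}$ after folding, realizing $\rho_{X}=2$; this should suggest both what goes wrong with a split of $\G_{X}\left(J\right)$ and how to set up a genuine argument.
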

We shall make use of the following theorem, which asserts that the
lower bound is attained if and only if $H$ is a free factor of $J$:
\begin{thm}[{\cite[Theorem 1.1]{Pud14a}}]
\label{thm:rho}Let $H,J\fg\F_{k}$ and assume further that $H\covers J$.
Then $H\ff J$ if and only if 
\[
\rho_{X}\left(H,J\right)=\rk\left(J\right)-\rk\left(H\right)
\]

\end{thm}
In fact, the implication which is needed for our proof is trivial:
As mentioned above, merging two vertices in $\G_{X}\left(H\right)$
translates to adding some generator to $H$. If it is possible to
obtain $\G_{X}\left(J\right)$ from $\G_{X}\left(H\right)$ by $rk\left(J\right)-rk\left(H\right)$
merging steps, this means we can obtain $J$ from $H$ by adding $rk\left(J\right)-rk\left(H\right)$
complementary generators to $H$, hence $H\ff J$.%
\footnote{This relies on the well known fact that a set of size $k$ which generates
$\F_{k}$ is a basis. %
} The other implication is not trivial and constitutes the essence
of the proof of Theorem 1.1 in \cite{Pud14a}. The difficulty is that
when $H\ff_{\Xcov}J$, it is not a priori obvious why it is possible
to find $rk\left(J\right)-rk\left(H\right)$ complementing generators
of $J$ from $H$, so that each of them can be realized by merging
a pair of vertices in $\G_{X}\left(H\right)$. \medskip{}

We finish this section with a classical fact about free factors that
will be useful in the next section. 
\begin{claim}
\label{cla:ff-properties}Let $H$, $J$ and $K$ be subgroups of
$\F_{k}$. 
\begin{enumerate}
\item \label{enu:intersection-free-ext}If $H\ff J$ and $K\le J$, then
$H\cap K\ff K$.
\item If $H,K\ff J$ then $H\cap K\ff J$.
\item \label{enu:intermideate-in-free}If $H\ff J$ then $H$ is a free
factor of any intermediate group $H\le M\le J$.
\end{enumerate}
\end{claim}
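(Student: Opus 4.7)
The plan is to establish part (1) via a core-graph argument, and then derive (3) and (2) from it by simple reductions.

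For (1), since $H \ff J$, I would fix a basis $X = X_H \sqcup X_{H'}$ of $J$, where $X_H$ is a basis of $H$ and $X_{H'}$ is a basis of a complementing free factor $H' \le J$. Working inside $J$ with this basis, $\Gamma_X(H)$ is a bouquet of $|X_H|$ loops at the basepoint. I would then consider $\Gamma_X(K)$ and let $\Gamma^0$ denote the basepoint-connected component of the spanning subgraph of $\Gamma_X(K)$ obtained by retaining only those edges whose label lies in $X_H$. The crux is the identification
\[
\pi_1^X\left(\Gamma^0\right) \;=\; H \cap K.
\]
The inclusion ``$\subseteq$'' is clear, since every closed path at the basepoint in $\Gamma^0$ reads off an $X_H$-word that is simultaneously in $H$ and in $K = \pi_1^X(\Gamma_X(K))$. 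The reverse inclusion uses uniqueness of reduced forms in $\F(X)$: any $w \in H \cap K$ lies in $\langle X_H \rangle$, so its reduced form involves only $X_H$-letters; tracing this word from the basepoint of $\Gamma_X(K)$ yields a reduced closed path which never leaves the $X_H$-subgraph nor the basepoint component, hence is a closed path in $\Gamma^0$. With this identification in hand, I would pick a spanning tree $T^0$ of $\Gamma^0$ and extend it to a spanning tree $T$ of $\Gamma_X(K)$. The Nielsen--Schreier recipe then produces a free basis $B$ of $K$ from the non-tree edges of $T$; the subset $B_1 \subseteq B$ consisting of non-tree edges that lie in $\Gamma^0$ coincides with the non-tree edges of $T^0$, and therefore forms a free basis of $\pi_1^X(\Gamma^0) = H \cap K$. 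Writing $B = B_1 \sqcup B_2$ exhibits $K = (H \cap K) * \langle B_2 \rangle$, giving $H \cap K \ff K$.

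For (3), I would apply (1) with $K = M$: since $H \le M$ one has $H \cap M = H$, so $H \ff M$ immediately. For (2), part (1) yields $H \cap K \ff K$, and combining with $K \ff J$ gives $H \cap K \ff J$ by the elementary transitivity of ``$\ff$'' (if $J = K * K_1$ and $K = (H \cap K) * K_2$, then $J = (H \cap K) * (K_2 * K_1)$). The main obstacle, and the only step that requires real care, is verifying the identification $\pi_1^X(\Gamma^0) = H \cap K$; it ultimately rests on the fact that the reduced-word representative in $\F(X)$ of any element of $H$ involves only $X_H$-letters, which is exactly what makes the choice of the adapted basis $X = X_H \sqcup X_{H'}$ so essential to the argument.
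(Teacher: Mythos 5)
Your proof is correct and follows essentially the same route as the paper: choose a basis of $J$ adapted to $H$ so that $\Gamma_X(H)$ is a bouquet, observe that the core graph of $H\cap K$ then embeds in $\Gamma_X(K)$, and conclude that an embedding of core graphs gives a free factor. The only cosmetic differences are that you re-derive the ``embedding $\Rightarrow$ free factor'' step via a Nielsen--Schreier spanning-tree argument rather than citing Claim \claref{morphism-properties}(4), and you obtain part (3) as the special case $K=M$ of part (1) rather than by the paper's direct observation that $\eta_{H\rightarrow M}^{Y}$ is injective because its domain is a bouquet.
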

\begin{proof}
Let $Y$ be a basis of $J$ extending a basis $Y_{0}$ of $H$. Then
$\G_{Y}\left(J\right)$ and $\G_{Y}\left(H\right)$ are bouquets of
$|Y|,|Y_{0}|$ loops, respectively. It is easy to check that $\G_{Y}\left(H\cap K\right)$
is obtained from $\G_{Y}\left(K\right)$ as follows: first, delete
the edges labeled by $Y\setminus Y_{0}$; then, keep only the connected
component of the basepoint; finally, trim all ``hanging trees''
(see the proof of Claim \claref{cover-properties}). Consequently,
$\G_{Y}\left(H\cap K\right)$ is embedded in $\G_{Y}\left(K\right)$.
Claim \claref{morphism-properties}(4) then gives (1), and (2) and
(3) follow immediately.
\end{proof}
In particular, the last claim shows that if $H\ff\F_{k}$ then $\pi\left(H\right)=\infty$
(see Definition \defref{prim_rank}). On the other hand, if $H$ is
not a free factor of $\F_{k}$, then obviously $\pi\left(H\right)\le\rk\left(\F_{k}\right)=k$.
Thus $\pi\left(H\right)\in\left\{ 0,1,2,\ldots,k\right\} \cup\left\{ \infty\right\} $.

\section{\label{sec:Algebraic-Extensions}Algebraic extensions and critical
subgroups}

We now return to the sparsest partial order we consider in this paper,
that of algebraic extensions. All claims in this section appear in
\cite{KM02,MVW07}, except for Lemma \ref{lem:detecting_AE}. We shall
occasionally sketch some proofs in order to allow the reader to obtain
better intuition and in order to exemplify the strength of core graphs. 

Recall (Definition \defref{algebraic-extension}) that $J$ is an
algebraic extension of $H$, denoted $H\alg J$, if $H\le J$ and
$H$ is not contained in any proper free factor of $J$. For example,
consider $H=\left\langle x_{1}x_{2}x_{1}^{\;-1}x_{2}^{\,-1}\right\rangle \le\F_{2}$.
A proper free factor of $\F_{2}$ has rank at most 1, and $H$ is
not contained in any subgroup of rank 1 other than itself (as $x_{1}x_{2}x_{1}^{-1}x_{2}^{-1}$
is not a proper power). Finally, $H$ itself is not a free factor
of $\F_{2}$ (as can be inferred from Theorem \thmref{rho} and Figure
\figref{commutator-fringe}). Thus, $H\alg\F_{2}$. In fact, we shall
see that in this case $\AE{H}=\left\{ H,\F_{2}\right\} $.

We first show that ``$\alg$'' is a partial order:
\begin{claim}
\label{cla:alg-transitive}The relation ``$\alg$'' is transitive.\end{claim}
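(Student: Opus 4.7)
The plan is to prove transitivity by contradiction, using the intersection property of free factors recorded in Claim \claref{ff-properties}\enuref{intersection-free-ext}. Suppose $H \alg K$ and $K \alg J$, and assume toward a contradiction that $H \alg J$ fails. Since we already have $H \le K \le J$, the failure must come from $H$ being contained in some proper free factor $L \pff J$.

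The key step is to consider the intersection $K \cap L$. By Claim \claref{ff-properties}\enuref{intersection-free-ext}, since $L \ff J$ and $K \le J$, we deduce that $K \cap L \ff K$. Now $H$ sits inside both $K$ (because $H \le K$) and $L$ (by the assumption on $L$), so $H \le K \cap L$. But $H \alg K$ says precisely that $H$ is not contained in any \emph{proper} free factor of $K$, and $K \cap L$ is a free factor of $K$ containing $H$. Therefore $K \cap L = K$, which is to say $K \le L$.

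This, in turn, places $K$ inside a proper free factor $L$ of $J$, contradicting $K \alg J$. The contradiction shows that no such $L$ exists, so $H \alg J$, completing the proof. I do not expect any serious obstacle here: the entire argument reduces to the single nontrivial input that free-factorness is preserved under intersection with a subgroup, which has already been established. The only thing to double-check is that the definition is being applied cleanly to the \emph{proper} free factor at each step, but this is automatic because the roles of $K$ and $L$ are symmetric with respect to properness in the two inclusions used.
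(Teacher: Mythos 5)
Your proof is correct and is essentially the paper's proof, just reorganized as an argument by contradiction: both rely on the same key fact (Claim \claref{ff-properties}\enuref{intersection-free-ext}) applied to $K\cap L$ to force $K\cap L=K$ and hence $K\le L$, and then invoke $K\alg J$ to conclude. The paper simply argues directly that any intermediate free factor $L$ of $J$ containing $H$ must equal $J$, which is the contrapositive of what you wrote.
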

\begin{proof}
Assume that $H\alg M\alg J.$ Let $H\le L\ff J$. By Claim \claref{ff-properties}\enuref{intersection-free-ext},
$L\cap M\ff M$. But $H\le L\cap M$ and $H\alg M$, so $L\cap M=M$,
and thus $M\le L.$ So now $M\le L\ff J$, and from $M\alg J$ we
obtain that $L=J$.
\end{proof}
Next, we show that ``$\alg$'' is dominated by ``$\covers$''
for every basis $X$ of $\F_{k}$. Namely, if $H\alg J$ then $H\covers J$.
This shows, in particular, that the poset $\left(\mathfrak{sub}_{f\! g}\left(\mathbf{F}_{k}\right),\alg\right)$
is locally-finite.
\begin{claim}
\label{cla:alg-then-covers}If $H\alg J$ then $H\covers J$ for every
basis $X$ of $\F_{k}$.\end{claim}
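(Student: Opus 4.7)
The plan is to invoke the factorization property of morphisms of core graphs from Claim \claref{cover-properties}\enuref{Cover-insideCover} (more precisely, part (1) of that claim). Given $H \le J$, we know there is some intermediate subgroup $M$ with $H \covers M \ff J$. This $M$ is obtained by taking $\pi_1^X$ of the image of $\eta_{H\to J}^X$ inside $\G_X(J)$: the image is itself a core graph (it contains no hanging trees since $\G_X(H)$ has none), so it corresponds to a subgroup $M$; the morphism $\G_X(H) \to \mathrm{im}\,\eta_{H\to J}^X$ is surjective by construction, giving $H \covers M$, while the inclusion $\mathrm{im}\,\eta_{H\to J}^X \hookrightarrow \G_X(J)$ is an embedding of core graphs, giving $M \ff J$ by Claim \claref{morphism-properties}(4).

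Now the algebraicity hypothesis does the remaining work. Since $H \alg J$, the subgroup $H$ is not contained in any proper free factor of $J$. But $M$ is a free factor of $J$ that contains $H$, so $M$ cannot be a proper free factor, forcing $M = J$. Combining with $H \covers M$ we conclude $H \covers J$, as required.

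There is essentially no obstacle: once one has the canonical factorization of $\eta_{H\to J}^X$ into a surjection followed by an embedding, the algebraic extension hypothesis collapses the intermediate group onto $J$ itself. The only thing to be careful about is that this argument works for \emph{every} choice of basis $X$, which is automatic here because the factorization from Claim \claref{cover-properties}(1) is available for any basis — the intermediate subgroup $M$ may well depend on $X$, but in each case the same argument forces $M = J$.
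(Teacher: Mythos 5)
Your proposal is correct and follows essentially the same route as the paper: both invoke the factorization $H \covers M \ff J$ from Claim \claref{cover-properties}(1) and then use the definition of algebraic extension to force $M = J$. (Minor slip: your reference \claref{cover-properties}\enuref{Cover-insideCover} points to part (3), but you correctly identify and use part (1) immediately afterward.)
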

\begin{proof}
By Claim \claref{cover-properties}, there is an intermediate subgroup
$M$ such that $H\covers M\ff J$, and from $H\alg J$ it follows
that $M=J$.\end{proof}
\begin{rem}
\label{rem:alg-cover-inc}It is natural to conjecture that the converse
also holds, namely that if $H\covers J$ for every basis $X$ of $\F_{k}$
then $H\alg J$. (In fact, this conjecture appears in \cite{MVW07},
Section 3.) This is, however, false: it turns out that for $H=\left\langle x_{1}^{\,2}x_{2}^{\,2}\right\rangle $
and $J=\left\langle x_{1}^{\,2}x_{2}^{\,2},x_{1}x_{2}\right\rangle $,
$H\covers J$ for every basis $X$ of $\F_{2}$, but $J$ is \emph{not}
an algebraic extension of $H$ \cite{PP14}. However, there are bases
of $\F_{3}$ with respect to which $H$ does not cover $J$. Hence,
it is still plausible that some weaker version of the conjecture holds,
e.g.\ that $H\alg J$ if and only if for every embedding of $J$
in a free group $F$, and for every basis $X$ of $F$, $H\covers J$.
It is also plausible that the original conjecture from \cite{MVW07}
holds for $\F_{k}$ with $k\ge3$. \\
In a similar fashion, one can ask whether $H\le J$ if and only if
for some basis $X$ of $\F_{k}$, $H\covers J$.
\end{rem}
Claim \ref{cla:alg-then-covers} completes the proof of the relations,
mentioned in Section \secref{Overview-of-the}, between the different
partial orders we consider in this paper: inclusion, the family $\covers$,
and algebraic extensions. Recall that $H$-critical subgroups are
a special kind of algebraic extensions. Thus: 
\[
\crit\left(H\right)\subseteq\AE{H}\subseteq\XF{H}\subseteq[H,\infty)_{\le}.
\]
Theorem \thmref{rho} and Claim \claref{alg-then-covers} give the
following criterion for algebraic extensions:
\begin{lem}
\label{lem:detecting_AE}Let $H\fg\F_{k}$. The algebraic extensions
of $H$ are the elements of $\XF{H}$ which are \emph{not} immediate
quotients of any subgroup in $\XF{H}$ of smaller rank.\end{lem}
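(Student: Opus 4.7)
The plan is to prove the double inclusion by contrapositive, using Theorem~\thmref{rho} as the main tool and combining it with the structural facts collected in Claims~\claref{cover-properties} and~\claref{bounds_for_rho}.

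For the easy inclusion ``$\supseteq$'', I would suppose $J \in \XF{H}$ is an immediate quotient of some $L \in \XF{H}$ with $\rk(L) < \rk(J)$. By definition $\rho_X(L,J) = 1$, and Claim~\claref{bounds_for_rho} together with the strict rank inequality forces $\rho_X(L,J) = \rk(J) - \rk(L) = 1$. Theorem~\thmref{rho} then yields $L \ff J$, and the rank gap makes this a proper free factor. Since $H \le L$, this exhibits $H$ inside a proper free factor of $J$, so $J \notin \AE{H}$.

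For the harder inclusion ``$\subseteq$'', I would again work contrapositively: assume $J \in \XF{H}$ is not an algebraic extension, so some $H \le K \pff J$ exists. The goal is to produce $L \in \XF{H}$ of rank strictly less than $\rk(J)$ having $J$ as an immediate quotient. First apply Claim~\claref{cover-properties}(1) to $H \le K$ to obtain $M$ with $H \covers M \ff K$. Transitivity of ``$\ff$'' gives $M \ff J$, and Claim~\claref{cover-properties}(3), applied with $H \covers J$ and $H \covers M \le J$, gives $M \covers J$. Theorem~\thmref{rho} then yields $\rho_X(M,J) = \rk(J) - \rk(M) =: r$, with $r \ge 1$ since $\rk(M) \le \rk(K) < \rk(J)$. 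Pick any chain of immediate quotients $M = M_0 \covers M_1 \covers \cdots \covers M_r = J$ realizing this distance. The key observation is that along such an optimal chain every step must increase the rank by exactly $1$: a single immediate quotient changes rank by at most $+1$, while the total change across the $r$ steps equals $r$, forcing each step to be exactly $+1$. Hence $\rk(M_{r-1}) = \rk(J) - 1 < \rk(J)$; by transitivity of ``$\covers$'' we have $M_{r-1} \in \XF{H}$; and by construction $J$ is an immediate quotient of $M_{r-1}$. So $M_{r-1}$ is the desired witness, completing the contrapositive.

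The main obstacle is matching the rank of the witness exactly to $\rk(J) - 1$. The naive candidate $K$ itself need not lie in $\XF{H}$, and replacing it with $M \ff K$ via Claim~\claref{cover-properties}(1) may drop the rank far below $\rk(J) - 1$. What rescues the argument is precisely Theorem~\thmref{rho} --- pinning $\rho_X(M,J)$ to the rank gap whenever $M \ff J$ --- together with the per-step rank bound for immediate quotients; these combine to let us walk back exactly one step along any optimal chain and land on the rank-$(\rk(J){-}1)$ witness.
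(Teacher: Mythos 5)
Your proof is correct and follows essentially the same route as the paper: factor $H\le K$ via Claim~\claref{cover-properties}(1) to get $H\covers M\ff K$, deduce $M\ff J$ and $M\covers J$, apply Theorem~\thmref{rho} to pin $\rho_{X}(M,J)=\rk(J)-\rk(M)$, and walk back one step along an optimal chain to find the rank-$(\rk(J)-1)$ witness; the easy inclusion likewise mirrors the paper's use of Theorem~\thmref{rho}. You spell out a couple of steps (why $\rho_{X}(L,J)=1$ pins down the rank gap, and why every step along an optimal chain must raise the rank by exactly one) that the paper leaves implicit, but the argument is the same.
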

\begin{proof}
Let $J\in\XF{H}$. If $J$ is an immediate $X$-quotient of $L\in\XF{H}$
with $\rk\left(L\right)<\rk\left(J\right)$, then by Theorem \thmref{rho}
$H\le L\pff J$, hence $J$ is not an algebraic extension of $H$.
On the other hand, assume there exists some $L$ such that $H\le L\pff J$.
By Claim \claref{cover-properties}(1), there exists $M$ such that
$H\covers M\ff L\pff J$. By Claim \claref{cover-properties}(3),
$M\pff_{\Xcov}J$. From Theorem \thmref{rho} it follows that there
is a chain of immediate quotients $M=M_{0}\leq M_{1}\leq\ldots\leq M_{r}=J$
inside $\XF{H}$ with $\rk\left(M_{i+1}\right)=\rk\left(M_{i}\right)+1$,
and $M_{r-1}$ is the group we have looked for.
\end{proof}
Since the subgraph of $\D_{k}$ induced by the vertices corresponding
to $\XF{H}$, namely $\G_{X}\left(H\right)$ and its descendants,
is finite and can be effectively computed, Lemma \lemref{detecting_AE}
yields a straight-forward algorithm to find all algebraic extensions
of a given $H\fg\F_{k}$ (this algorithm was first introduced in \cite{Pud14a}).
This, in particular, allows one to find all $H$-critical subgroups,
and thus to compute the primitivity rank $\pi\left(H\right)$: the
subgroups constituting $\crit\left(H\right)$ are those in $\left(H,\infty\right)_{alg}$
of minimal rank, which is $\pi\left(H\right)$. For instance, Figure
\figref{commutator-fringe} shows that for $H=\left\langle x_{1}x_{2}x_{1}^{-1}x_{2}^{-1}\right\rangle $
we have ${H}=\left\{ H,\F_{2}\right\} $. Thus, $\crit\left(H\right)=\left\{ \F_{2}\right\} $
and $\pi\left(H\right)=2$ (so $\rp\left(H\right)=1$). 

\medskip{}

We conclude this section with yet another elegant result from \cite{KM02,MVW07}
that will be used in the proof of Theorem \thmref{avg_fixed_points}.
In the spirit of field extensions, it says that every extension of
subgroups of $\F_{k}$ has a unique factorization to an algebraic
extension followed by a free extension (compare this with Claim \claref{cover-properties}(1,2)):
\begin{claim}
\label{cla:alg-closure}Let $H\le J$ be free groups. Then there is
a unique subgroup $L$ of $J$ such that $H\alg L\ff J$. Moreover,
$L$ is the intersection of all intermediate free factors of $J$
and the union of all intermediate algebraic extensions of $H$:
\begin{equation}
L=\bigcap_{M:\, H\le M\ff J}M=\bigcup_{M:\, H\alg M\leq J\vphantom{\ff}}M\label{eq:min_ff_max_alg}
\end{equation}

\end{claim}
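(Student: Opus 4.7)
The plan is to construct $L$ as the minimum element of the family
\[
\mathcal{F}=\left\{ M\le J\,:\,H\le M\ff J\right\},
\]
which is non-empty since $J\in\mathcal{F}$. In the finitely generated setting relevant to this paper, one may simply pick $L\in\mathcal{F}$ of minimum rank; in full generality the same argument goes through via Zorn's lemma, using that $\mathcal{F}$ is closed under intersection (see below).

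The first step is to show that this $L$ is contained in every $M\in\mathcal{F}$. By Claim~\ref{cla:ff-properties}(2), $L\cap M\ff J$, and since $H\le L\cap M$ we get $L\cap M\in\mathcal{F}$. Also $L\cap M\le L\le J$, so Claim~\ref{cla:ff-properties}(3) yields $L\cap M\ff L$. Minimality of $\rk(L)$ then forces $\rk(L\cap M)=\rk(L)$; as a free factor of equal finite rank in a free group must coincide with the ambient group, $L\cap M=L$, i.e.\ $L\le M$. This already establishes $L=\bigcap_{M\in\mathcal{F}}M$.

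Next, to verify $H\alg L$: suppose $H\le N\ff L$. Transitivity of ``$\ff$'' is immediate ($N*N'=L$ and $L*L'=J$ combine to $N*(N'*L')=J$), so $N\ff J$, giving $N\in\mathcal{F}$, hence $L\le N$, whence $N=L$. For uniqueness, if $L'$ also satisfies $H\alg L'\ff J$, then $L'\in\mathcal{F}$ gives $L\le L'$; Claim~\ref{cla:ff-properties}(3) upgrades this to $L\ff L'$; and $H\le L\ff L'$ together with $H\alg L'$ forces $L=L'$.

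Finally for the union equality: any $M$ with $H\alg M\le J$ satisfies $M\cap L\ff M$ by Claim~\ref{cla:ff-properties}(1) (applied to $L\ff J$ and $M\le J$), and $H\le M\cap L$, so the definition of algebraic extension forces $M\cap L=M$, i.e.\ $M\le L$. Conversely $L$ itself appears in the union, since $H\alg L$. The only delicate point in this plan is the existence of a minimum in $\mathcal{F}$ outside the f.g.\ case; for every application in this paper the finite-rank argument above suffices, and everything else is a short consequence of the properties of ``$\ff$'' already recorded in Claim~\ref{cla:ff-properties}.
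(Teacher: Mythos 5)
Your proof is correct and follows essentially the same route as the paper's: both take $L$ to be the minimum-rank member of $\{M : H\le M\ff J\}$, use Claim~\ref{cla:ff-properties}(2) together with a rank argument to show it is the intersection of all such $M$, observe $H\alg L$ by minimality, and then use Claim~\ref{cla:ff-properties}(1) to show every intermediate algebraic extension of $H$ lies in $L$. The paper likewise restricts to the finitely generated case, so your remark about Zorn's lemma matches its level of generality.
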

In particular, the intersection of all free factors is a free factor,
and the union of all algebraic extensions is an algebraic extension.
Claim \claref{alg-closure} is true in general, but we describe the
proof only of the slightly simpler case of finitely generated subgroups.
We need only this case in this paper.
\begin{proof}
By Claim \claref{ff-properties} and rank considerations, the intersection
in the middle of \eqref{min_ff_max_alg} is by itself a free factor
of $J$. Denote it by $L$, so we have $H\le L\ff J$. Clearly, $L$
is an algebraic extension of $H$ (otherwise it would contain a proper
free factor). But we claim that $L$ contains every other intermediate
algebraic extension of $H$. Indeed, let $H\alg M\le J$. By Claim
\claref{ff-properties}\enuref{intersection-free-ext}, $H\le M\cap L\ff M$,
so $M\cap L=M$, that is $M\le L$.
\end{proof}

\section{\label{sec:Mobius-Inversions}Möbius inversions}

Let $\left(P,\le\right)$ be a locally-finite poset and let $A$ be
a commutative ring with unity. Then there exists an \emph{incidence
algebra}%
\footnote{The theory of incidence algebras of posets can be found in \cite{Stan97}.%
} of all functions from pairs $\left\{ \left(x,y\right)\in P\times P\,\middle|\, x\leq y\right\} $
to $A$. In addition to point-wise addition and scalar multiplication,
it has an associative multiplication defined by convolution:

\[
(f*g)(x,y)=\sum_{z\in[x,y]}f(x,z)g(z,y)
\]
 (where $x\le y$ and $\left[x,y\right]=\left\{ z\,\middle|\, x\leq z\leq y\right\} $).
The unit element is the diagonal
\[
\delta(x,y)=\begin{cases}
1 & x=y\\
0 & x\lneqq y
\end{cases}.
\]
Functions with invertible diagonal entries (i.e.\ $f\left(x,x\right)\in A^{\times}$
for all $ $$x\in P$) are invertible w.r.t.\ this multiplication.
Most famously, the constant $\zeta$ function, which is defined by
$\zeta\left(x,y\right)=1$ for all $x\leq y$, is invertible, and
its inverse, $\mu$, is called the\emph{ Möbius function} of $P$.
This means that $\zeta*\mu=\mu*\zeta=\delta$, i.e., for every pair
$x\le y$ 
\[
\sum_{z\in[x,y]}\mu(z,y)=(\zeta*\mu)(x,y)=\delta(x,y)=(\mu*\zeta)(x,y)=\sum_{z\in[x,y]}\mu(x,z).
\]

Let $f$ be some function in the incidence algebra. The function $f*\zeta$,
which satisfies $\left(f*\zeta\right)\left(y\right)=\sum_{z\in\left[x,y\right]}f\left(z\right)$,
is analogous to the right-accumulating function in calculus (for $g:\mathbb{R}\rightarrow\mathbb{R}$
this is the function $G\left(y\right)=\int_{z\in\left[x,y\right]}g\left(z\right)dz$).
Thus, multiplying a function on the right by $\mu$ can be thought
of as ``right derivation''. Similarly, one thinks of multiplying
from the left by $\zeta$ and $\mu$ as left integration and left
derivation, respectively.

Recall the function $\Phi$ \eqref{Phi}, defined for every pair of
free subgroups $H,J\fg\F_{k}$ such that $H\le J$: $\Phi_{H,J}\left(n\right)$
is the expected number of common fixed points of $\alpha_{J,n}\left(H\right)$,
where $\alpha_{J,n}\in\Hom\left(J,S_{n}\right)$ is a random homomorphism
chosen with uniform distribution. We think of $\Phi$ as a function
from the set of such pairs $\left(H,J\right)$ into the ring of functions
$\mathbb{N}\to\mathbb{Q}$. 

Let $X$ be a basis of $\F_{k}$. We write $\Phi^{X}$ for the restriction
of $\Phi$ to pairs $\left(H,J\right)$ such that $H\covers J$. As
``$\covers$'' defines a locally finite partial ordering of $\mathfrak{sub}_{f\! g}\left(\mathbf{F}_{k}\right)$,
there exists a matching Möbius function, $\mu^{X}=\left(\zeta^{X}\right)^{-1}$
(where $\zeta_{H,J}^{X}=1$ for all $H\covers J$). Our proof of Theorem
\thmref{avg_fixed_points} consists of a detailed analysis of the
left, right, and two-sided derivations of $\Phi^{X}$:
\[
\xymatrix{ & \Phi^{X}\ar@{-}[dl]\ar@{-}[dr]\\
L^{X}\overset{{\scriptscriptstyle {def}}}{=}\mu^{X}*\Phi^{X}\ar@{-}[dr] &  & R^{X}\overset{{\scriptscriptstyle {def}}}{=}\Phi^{X}*\mu^{X}\ar@{-}[dl]\\
 & C^{X}\overset{{\scriptscriptstyle {def}}}{=}\mu^{X}*\Phi^{X}*\mu^{X}
}
\]
By definition, we have for every f.g.\ $H\covers J$:
\begin{equation}
\Phi_{H,J}=\negthickspace\sum_{M\in\XC{H}{J}}\negthickspace L_{M,J}^{X}=\negthickspace\sum_{M,N:\, H\covers M\covers N\covers J}\negthickspace C_{M,N}^{X}=\negthickspace\sum_{N\in\XC{H}{J}}\negthickspace R_{H,N}^{X}\label{eq:mobius}
\end{equation}
Note that \eqref{mobius} can serve as definitions for the three functions
$L^{X},C^{X},R^{X}$: for instance, $L^{X}=\mu^{X}*\Phi^{X}$ is equivalent
to $\zeta^{X}*L^{X}=\Phi^{X}$, which is the leftmost equality above.

We begin the analysis of these functions by the following striking
observation regarding $R^{X}$. Recall (Claim \claref{alg-then-covers})
that if $H\alg J$ then $H\covers J$ for every basis $X$. It turns
out that the function $R^{X}$ is supported on algebraic extensions
alone, and moreover, is independent of the basis $X$.
\begin{prop}
\label{prop:R-independent-of-basis}Let $H,J\fg\F_{k}$.
\begin{enumerate}
\item If $H\covers J$ but $J$ is \emph{not} an algebraic extension of
$H$, then $R_{H,J}^{X}=0$.
\item $R_{H,J}^{X}=R_{H,J}^{Y}$ for every basis $Y$ of $\F_{k}$, whenever
both are defined.
\end{enumerate}
\end{prop}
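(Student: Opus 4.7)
The key observation I would start from is that when $H \leq L \ff J$ are free groups, the restriction of a uniformly random homomorphism $J \to S_{n}$ to $L$ is uniformly distributed in $\mathrm{Hom}(L,S_{n})$: choose a basis of $J$ extending one of $L$, and note that the images of the basis elements are chosen uniformly and independently. Consequently $\Phi_{H,J} = \Phi_{H,L}$ whenever $H \leq L \ff J$. Combined with Claim~\claref{alg-closure}, if we let $L$ denote the algebraic closure of $H$ in $J$ (so $H \alg L \ff J$), then always $\Phi_{H,J} = \Phi_{H,L}$.

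For part (1), I would proceed by induction on $\bigl|\XC{H}{J}\bigr|$, proving simultaneously for all $N \in \XC{H}{J}$ that $R^{X}_{H,N} = 0$ whenever $N$ is not an algebraic extension of $H$. Fix $J$ with $H \covers J$ but $J$ not algebraic over $H$, and let $L \subsetneq J$ be its algebraic closure. Two poset facts are needed:
\begin{itemize}
\item Any $M$ with $H \alg M$ and $M \leq J$ is contained in $L$ (the second description of $L$ in Claim~\claref{alg-closure}).
\item If $H \alg M$ and $M \leq J$, then $H \covers M$ (Claim~\claref{alg-then-covers}) and then $M \covers J$ by Claim~\claref{cover-properties}\enuref{Cover-insideCover}; in particular $M \in \XC{H}{L} \subseteq \XC{H}{J}$.
\end{itemize}
Thus the algebraic extensions of $H$ inside $\XC{H}{J}$ coincide with those inside $\XC{H}{L}$. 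Applying the inductive hypothesis to all $M \in \XCO{H}{J}$ (their $\covers$-intervals from $H$ are strictly smaller since $J$ itself is excluded), and using $\Phi_{H,L} = \sum_{M \in \XC{H}{L}} R^{X}_{H,M}$, the right-hand Möbius expansion collapses to
\[
\Phi_{H,J} \;=\; R^{X}_{H,J} \;+\; \sum_{\substack{M \in \XCO{H}{J}\\ H \alg M}} R^{X}_{H,M} \;=\; R^{X}_{H,J} \;+\; \Phi_{H,L}.
\]
Since $\Phi_{H,J} = \Phi_{H,L}$ by the opening observation, $R^{X}_{H,J} = 0$.

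For part (2), part (1) gives that for any $H \alg J$ and any basis $X$,
\[
\Phi_{H,J} \;=\; \sum_{\substack{M \in \AE{H}\\ M \leq J}} R^{X}_{H,M},
\]
where the index set is the basis-independent poset of algebraic extensions of $H$ contained in $J$ (the argument in part (1) shows that all such $M$ automatically lie in $\XC{H}{J}$, and part (1) kills the rest of the sum). This is an upper-triangular system with ones on the diagonal, which determines $R^{X}_{H,M}$ uniquely from the $\Phi_{H,\cdot}$ values and the inclusion order on $\AE{H}$. A second, short induction on $|\{M' \in \AE{H} : M' \leq M\}|$, with base case $R^{X}_{H,H} = \Phi_{H,H}$, then yields $R^{X}_{H,J} = R^{Y}_{H,J}$ for any two bases $X,Y$.

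The main obstacle I foresee is the bookkeeping in the inductive step of part (1): one must verify that restricting attention to algebraic $N$'s inside $\XCO{H}{J}$ produces exactly the index set over which $\Phi_{H,L}$ expands via $\zeta^{X}$, and this requires combining Claims~\claref{alg-closure}, \claref{alg-then-covers}, and \claref{cover-properties}\enuref{Cover-insideCover} as above. Once that compatibility is set up, both statements follow from the defining recursion $\Phi^{X} = \zeta^{X} * R^{X}$ together with the invariance $\Phi_{H,J} = \Phi_{H,L}$.
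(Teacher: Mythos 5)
Your proof is correct and follows essentially the same route as the paper's: the same key observation that $\Phi_{H,J}=\Phi_{H,L}$ for $H\le L\ff J$, the same induction on $\left|\XC{H}{J}\right|$, and the same use of Claims~\claref{alg-closure}, \claref{alg-then-covers} and \claref{cover-properties}\enuref{Cover-insideCover} to identify the algebraic extensions inside $\XC{H}{J}$ with those inside $\XC{H}{L}$. The only cosmetic difference is that you run two sequential inductions (first part (1) alone, then part (2) over the poset of algebraic extensions), whereas the paper proves both statements simultaneously in a single induction on $\left|\XC{H}{J}\right|$.
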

\begin{rem}
The only property of $\Phi$ we use is that $\Phi_{H,L}=\Phi_{H,J}$
whenever $H\leq L\ff J$, which is easy to see from the definition
of $\Phi$. Therefore, the proposition holds for the right derivation
of every function with this property. In particular, the proposition
holds for every ``statistical'' function, in which the value of
$\left(H,J\right)$ depends solely on the image of $H$ via a uniformly
distributed random homomorphism from $J$ to some group $G$. \end{rem}
\begin{proof}
We show both claims at once by induction on $\left|\XC{H}{J}\right|$,
the size of the closed interval between $H$ and $J$. The induction
basis is $H=J$. That $H\alg H$ is immediate. By \eqref{mobius},
$R_{H,H}^{X}=\Phi_{H,H}$ and so $R_{H,H}^{X}$ is indeed independent
of the basis $X$. 

Assume now that $\left|\XC{H}{J}\right|=r$ and that both claims are
proven for every pair bounding an interval of size $<r$. By \eqref{mobius}
and the first claim of the induction hypothesis, 
\begin{equation}
R_{H,J}^{X}=\Phi_{H,J}-\negthickspace\sum_{N\in\XCO{H}{J}}\negthickspace R_{H,N}^{X}=\Phi_{H,J}-\negthickspace\sum_{N:\, H\alg N\ncovers J}\negthickspace R_{H,N}^{X}\label{eq:RHJ}
\end{equation}
By Claim \claref{cover-properties}\enuref{Cover-insideCover}, $\left\{ N\,\middle|\, H\alg N\ncovers J\right\} =\left\{ N\,\middle|\, H\alg N\lneqq J\right\} $,
and the latter is independent of the basis $X$. Furthermore, by the
induction hypothesis regarding the second claim, so are the terms
$R_{H,N}^{X}$ in this summation. This settles the second point.

Finally, if $J$ is \emph{not} an algebraic extension of $H$ then
let $L$ be some intermediate free factor of $J$, $H\le L\pff J$.
As mentioned above, this yields that $\Phi_{H,J}=\Phi_{H,L}$. Therefore,
\begin{align*}
R_{H,J}^{X} & =\Phi_{H,J}-\negthickspace\sum_{N\in\XCO{H}{J}}\negthickspace R_{H,N}^{X}=\underbrace{\Phi_{H,L}-\negthickspace\sum_{N\in\XC{H}{L}}\negthickspace R_{H,N}^{X}}_{0\,\mathrm{by\, definition}}-\sum_{N\in\XCO{H}{J}\setminus\XC{H}{L}}\negthickspace R_{H,N}^{X}
\end{align*}
By Claim \claref{alg-closure}, all algebraic extensions of $H$ inside
the interval $\XC{H}{J}$ are contained in $L$. Hence, every subgroup
$N\in\XCO{H}{J}\setminus\XC{H}{L}$ is not an algebraic extension
of $H$, and by the induction hypothesis $R_{H,N}^{X}$ vanishes.
The desired result follows.
\end{proof}
In view of Proposition \propref{R-independent-of-basis} we can omit
the superscript and write from now on $R_{H,J}$ instead of $R_{H,J}^{X}$.
Moreover, we can write the following ``basis independent'' equation
for every pair of f.g.\ subgroups $H\le J$:
\begin{equation}
\Phi_{H,J}=\sum_{N:\, H\alg N\le J}R_{H,N}.\label{eq:phi=00003Dsum-of-alg}
\end{equation}
When $H\covers J$ this follows from the proof above. For general
$H\le J$, there is some subgroup $L$ such that $H\covers L\ff J$
and every intermediate algebraic extension $H\alg N\leq J$ is contained
in $L$ (see Claims \claref{cover-properties} and \claref{alg-closure}).
Therefore, 
\[
\Phi_{H,J}=\Phi_{H,L}=\negthickspace\sum_{N:\, H\alg N\le L}\negthickspace R_{H,N}=\negthickspace\sum_{N:\, H\alg N\le J}\negthickspace R_{H,N}.
\]

It turns out that unlike the function $R$, the other two derivations
of $\Phi$, namely $L^{X}$ and $C^{X}$, do depend on the basis $X$.
However, the latter two functions have combinatorial interpretations.
In the next section we show that $\Phi_{H,J}$ and $L_{H,J}^{X}$
can be described in terms of random coverings of the core graph $\G_{X}\left(J\right)$,
and that explicit rational expressions in $n$ can be computed to
express these two functions for given $H,J$ (Lemmas \lemref{Phi-as-lifts}
and \lemref{L-as-lifts} below). This, in turn, allows us to analyze
the combinatorial meaning and order of magnitude of $C_{M,N}^{X}$
(Proposition \propref{order-magnitude-C}). 

Finally, using the fact that $R$ is the ``left integral'' of $C^{X}$,
that is $R=\zeta^{X}*C^{X}$, we finish the circle around the diagram
of $\Phi$'s derivations, and use this analysis of $\Phi$, $L^{X}$
and $C^{X}$ to prove that for every pair $H\alg J$, $R_{H,J}$ does
not vanish and is, in fact, positive for large enough $n$. This alone
gives Theorem \thmref{mp_is_prim}. The more informative \thmref{avg_fixed_points}
follows from an analysis of the order of magnitude of $R_{H,J}$ in
this case (Proposition \propref{order-magnitude-R}).

\section{\label{sec:Random-Covers}Random coverings of core graphs}

This section studies the graphs which cover a given core-graph in
the topological sense, i.e.\ $\widehat{\Gamma}\overset{p}{\twoheadrightarrow}\Gamma$
with $p$ locally bijective. We call these graphs (together with their
projection maps) \emph{coverings} of $\Gamma$. The reader should
not confuse this with our notion ``covers'' from Definition \defref{quotient}. 

We focus on directed and edge-labeled coverings. This means we only
consider $\widehat{\Gamma}\overset{p}{\twoheadrightarrow}\Gamma$
such that $\widehat{\Gamma}$ is directed and edge-labeled, and the
projection $p$ preserves orientations and labels. When $\Gamma$
is a core-graph we do \emph{not} assume that $\widehat{\Gamma}$ is
a core-graph as well. It may be disconnected, and it need not be pointed.
Nevertheless, it is not hard to see that when $\Gamma$ and $\widehat{\Gamma}$
are finite, for every vertex $v$ in $p^{-1}\left(\otimes\right)$,
the fiber over $\Gamma$'s basepoint, we do have a valid core-graph,
which we denote by $\widehat{\Gamma}_{v}$: this is the connected
component of $v$ in $\widehat{\Gamma}$, with $v$ serving as basepoint.
Moreover, the restriction of the projection map $p$ to $\widehat{\Gamma}_{v}$
is a core-graph morphism. 

The theory of core-graph coverings shares many similarities with the
theory of topological covering spaces. The following claim lists some
standard properties of covering spaces, formulated for core-graphs.
\begin{claim}
\label{cla:core-coverings}Let $\Gamma$ be a core-graph, $\widehat{\Gamma}\overset{p}{\twoheadrightarrow}\Gamma$
a covering and $v$ a vertex in the fiber $p^{-1}\left(\otimes\right)$. 
\begin{enumerate}
\item The group $\pi_{1}^{X}\left(\Gamma\right)$ acts on the fiber $p^{-1}\left(\otimes\right)$,
and these actions give a correspondence between coverings of $\Gamma$
and $\pi_{1}^{X}\left(\Gamma\right)$-sets.
\item \label{enu:n-coverings-morphisms}In this correspondence, coverings
of $\Gamma$ with fiber $\left\{ 1,\ldots,n\right\} $ correspond
to actions of $\pi_{1}^{X}\left(\Gamma\right)$ on $\left\{ 1,\ldots,n\right\} $,
i.e., to group homomorphisms $\pi_{1}^{X}\left(\Gamma\right)\rightarrow S_{n}$.
\item The group $\pi_{1}^{X}\left(\widehat{\Gamma}_{v}\right)$ is the stabilizer
of $v$ in the action of $\pi_{1}^{X}\left(\Gamma\right)$ on $p^{-1}\left(\otimes\right)$
(note that $\pi_{1}^{X}\left(\widehat{\Gamma}_{v}\right)$ and $\pi_{1}^{X}\left(\Gamma\right)$
are both subgroups of $\F\left(X\right)$).
\item \label{enu:covering-lifting}A core-graph morphism $\Delta\rightarrow\Gamma$
can be lifted to a core-graph morphism $\Delta\rightarrow\widehat{\Gamma}_{v}$
(i.e., the diagram 
\[
\xymatrix{ & \widehat{\Gamma}_{v}\ar@{->>}[d]^{p}\\
\Delta\ar[r]\ar@{-->}[ur] & \Gamma
}
\]
can be completed) if and only if $\pi_{1}^{X}\left(\Delta\right)\subseteq\pi_{1}^{X}\left(\widehat{\Gamma}_{v}\right)$.
By the previous point, this is equivalent to saying that all elements
of $\pi_{1}^{X}\left(\Delta\right)$ fix $v$.
\end{enumerate}
\end{claim}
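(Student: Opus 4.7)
The plan is to translate classical covering space theory to this combinatorial setting, exploiting the directed edge-labels on $\Gamma$ and $\widehat{\Gamma}$ to handle all lifting questions with essentially no choice. The central construction is \emph{path lifting}: given a reduced word $w=x_{i_1}^{\varepsilon_1}\cdots x_{i_r}^{\varepsilon_r}\in \pi_1^X(\Gamma)$, viewed as a closed path at $\otimes$, and any vertex $u\in p^{-1}(\otimes)$, one lifts $w$ to $\widehat{\Gamma}$ starting at $u$ by following at each step the unique $x_{i_j}$-edge out of (or into) the current vertex in $\widehat{\Gamma}$. Uniqueness and existence are immediate because $p$ is locally bijective and $\widehat{\Gamma}$ inherits label-uniqueness from $\Gamma$. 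Define $u\cdot w$ to be the endpoint. Concatenation is compatible with path composition, and any backtrack-cancellation in the word corresponds to an immediate backtrack in the lift (which does not move the endpoint), so this descends to a well-defined right action of $\pi_1^X(\Gamma)$ on $p^{-1}(\otimes)$.

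For item (1), having produced an action from a covering, I would construct the inverse assignment. Given a $\pi_1^X(\Gamma)$-set $S$, build a covering $\widehat{\Gamma}_S\twoheadrightarrow\Gamma$ as a pullback: for each vertex $u\in V(\Gamma)$, the fiber over $u$ is a copy of $S$, and the $x_j$-edges in $\widehat{\Gamma}_S$ are determined by choosing, for each vertex $u$, a path from $\otimes$ to $u$ in $\Gamma$ and transporting vertices of the fiber accordingly (consistency of different path choices is automatic because $\pi_1^X(\Gamma)$ acts on $S$). Equivalently, one may form the universal covering tree of $\Gamma$, take its quotient by an arbitrary $\pi_1^X(\Gamma)$-set via the Borel construction, and verify the two passages are mutually inverse. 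Item (2) is the immediate specialization to $S=\{1,\ldots,n\}$: the data of an action on a labeled $n$-element set is exactly a homomorphism $\pi_1^X(\Gamma)\to S_n$.

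For item (3), by construction $w\in\pi_1^X(\Gamma)$ stabilizes $v$ if and only if the lift of $w$ starting at $v$ ends at $v$, that is, is a closed reduced path at $v$ in $\widehat{\Gamma}$. Any such path is contained in the connected component of $v$, namely $\widehat{\Gamma}_v$, and the set of reduced words read off closed paths at $v$ in $\widehat{\Gamma}_v$ is by definition $\pi_1^X(\widehat{\Gamma}_v)\le F(X)$. Therefore $\mathrm{Stab}(v)=\pi_1^X(\widehat{\Gamma}_v)$.

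For item (4), a lift $\Delta\to\widehat{\Gamma}_v$ is forced by its value on the basepoint (which must be $v$), because once the basepoint is fixed, path lifting extends the map along every edge uniquely. The obstruction to well-definedness is precisely that two paths in $\Delta$ with the same endpoint must lift to paths with the same endpoint; equivalently, every closed path in $\Delta$ at its basepoint must lift to a closed path at $v$. These closed paths spell the elements of $\pi_1^X(\Delta)$, and by (3) the closedness condition is exactly $\pi_1^X(\Delta)\subseteq\pi_1^X(\widehat{\Gamma}_v)$. The only real work in the whole claim, and the step I expect to require the most care, is verifying that the pullback construction in (1) is mutually inverse to the action construction; all the other items reduce to tidy unwindings of definitions once path lifting is in hand.
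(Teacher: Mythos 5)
The paper does not prove this claim; it is presented as a list of standard properties of covering spaces formulated for core-graphs, with the classical theory implicitly cited. Your proposal is a correct adaptation of that theory to the combinatorial setting. Path lifting is unambiguous here because $p$ is locally bijective and the labels and orientations on $\Gamma$ force each step, so the right action of $\pi_1^X(\Gamma)$ on $p^{-1}(\otimes)$ is well-defined; your remark that backtracks in the word lift to backtracks that do not move the endpoint is exactly the right justification for invariance under word reduction. Items (3) and (4) then unwind cleanly from the action, as you indicate. The one spot that warrants slightly more care is the inverse construction in item (1): the phrase ``consistency of different path choices is automatic'' glosses over the fact that a different choice of reference paths yields a different (though canonically isomorphic) covering, so one should say explicitly that the correspondence is between isomorphism classes of coverings and isomorphism classes of $\pi_1^X(\Gamma)$-sets. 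Your alternative via the universal covering tree and the balanced product avoids the choice altogether and is the cleaner way to establish the inverse assignment and verify that the two passages are mutually inverse.
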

We now turn our attention to random coverings. The vertex set of an
$n$-sheeted covering of a graph $\G=\left(V,E\right)$ can be assumed
to be $V\times\left\{ 1,\ldots,n\right\} $, so that the fiber above
$v\in V$ is $\left\{ v\right\} \times\left\{ 1,\ldots,n\right\} $.
For every edge $e=\left(u,v\right)\in E$, the fiber over $e$ then
constitutes a perfect matching between $\left\{ v\right\} \times\left\{ 1,\ldots,n\right\} $
and $\left\{ u\right\} \times\left\{ 1,\ldots,n\right\} $. This suggests
a natural model for random $n$-coverings of the graph $\G$. Namely,
for every $e\in E$ choose uniformly a random perfect matching (which
is just a permutation in $S_{n}$). This model was introduced in \cite{AL02},
and is a generalization of a well-known model for random regular graphs
(see e.g.\ \cite{BS87}).%
\footnote{Occasionally these random coverings are referred to as random \emph{lifts}
of graphs. We shall reserve this term for its usual meaning.%
} Note that the model works equally well for graphs with loops and
with multiple edges.

In fact, there is some redundancy in this model, if we are interested
only in isomorphism classes of coverings (two coverings are isomorphic
if there is an isomorphism between them that commutes with the projection
maps). It is possible to obtain the same distribution on (isomorphism
classes of) $n$-coverings of $\G$ with fewer random permutations:
one may choose some spanning tree $T$ of $\G$, associate the identity
permutation with every edge in $T$, and pick random permutations
only for edges outside $T$. 

We now fix some $J\fg\F_{k}$, and consider random coverings of its
core-graph, $\G_{X}\left(J\right)$. We denote by $\cJ$ a random
$n$-covering of $\G_{X}\left(J\right)$, according to one of the
models described above. If $p:\cJ\to\G_{X}\left(J\right)$ is the
covering map, then $\cJ$ inherits the edge orientation and labeling
from $\G_{X}\left(J\right)$ via $p^{-1}$. For every $i$ ($1\le i\le n)$,
we write $\cJ_{i}$ for the core-graph $\cJ_{\left(\otimes,i\right)}$
(the component of $\left(\otimes,i\right)$ in $\cJ$ with basepoint
$\left(\otimes,i\right)$). 

By Claim \claref{core-coverings}\enuref{n-coverings-morphisms},
each random $n$-covering of $\G_{X}\left(J\right)$ encodes a homomorphism
$\alpha_{J,n}\in\Hom\left(J,S_{n}\right)$, via the action of $J=\pi_{1}^{X}\left(\Gamma_{X}\left(J\right)\right)$
on the basepoint fiber. Explicitly, an element $w\in J$ is mapped
to a permutation $\alpha_{J,n}\left(w\right)\in S_{n}$ as follows:
$w$ corresponds to a closed path $p_{w}$ around the basepoint of
$\G_{X}\left(J\right)$. For every $1\le i\le n$, the lift of $p_{w}$
that starts at $\left(\otimes,i\right)$ ends at $\left(\otimes,j\right)$
for some $j$, and $\alpha_{J,n}\left(w\right)\left(i\right)=j$. 

By the correspondence of actions of $J$ on $\left\{ 1,\ldots,n\right\} $
and $n$-coverings of $\Gamma_{X}\left(J\right)$, $\alpha_{J,n}$
is a uniform random homomorphism in $\Hom\left(J,S_{n}\right)$. This
can also be verified using the ``economical'' model, as follows:
choose some basis $Y=\left\{ y_{1},\ldots,y_{\rk\left(J\right)}\right\} $
for $J$ via a choice of a spanning tree $T$ of $\G_{X}\left(J\right)$
and of orientation of the remaining edges, and choose uniformly at
random some $\sigma_{r}\in S_{n}$ for every basis element $y_{r}$.
Clearly, $\alpha_{J,n}\left(y_{r}\right)=\sigma_{r}$.\medskip{}

We can now use the coverings of $\Gamma_{X}\left(J\right)$ to obtain
a geometric interpretation of $\Phi_{H,J}$, as follows: let $H\le J\fg\F_{k}$
and $1\leq i\leq n$. By \claref{core-coverings}\enuref{covering-lifting},
the morphism $\eta_{H\rightarrow J}^{X}:\Gamma_{X}\left(H\right)\rightarrow\Gamma_{X}\left(J\right)$
lifts to a core-graph morphism $\Gamma_{X}\left(H\right)\rightarrow\widehat{\Gamma}_{X}\left(J\right)_{i}$
iff $H=\pi_{1}^{X}\left(\Gamma_{X}\left(H\right)\right)$ fixes $\left(\otimes,i\right)$
via the action of $J$ on the fiber $\otimes\times\left\{ 1,\ldots,n\right\} $.
Since this action is given by $\alpha_{J,n}$, this means that $\eta_{H\rightarrow J}^{X}$
lifts to $\widehat{\Gamma}_{X}\left(J\right)_{i}$ exactly when $\alpha_{J,n}\left(H\right)$
fixes $i$. Recalling that $\Phi_{H,J}\left(n\right)$ is the expected
number of elements in $\left\{ 1,\ldots,n\right\} $ fixed by $\alpha_{J,n}\left(H\right)$,
we obtain an alternative definition for it:
\begin{lem}
\label{lem:Phi-as-lifts} Let $\cJ$ be a random $n$-covering space
of $\G_{X}\left(J\right)$ in the aforementioned model from \cite{AL02}.
Then, 
\[
\Phi_{H,J}\left(n\right)=\textrm{The expected number of lifts of \ensuremath{\eta_{H\rightarrow J}^{X}}\,\ to \ensuremath{\cJ}}.
\]
\[
\xymatrix{ & \cJ\ar@{->>}[d]^{p}\\
\Gamma_{X}\left(H\right)\ar[r]_{\eta_{H\rightarrow J}^{X}}\ar@{-->}[ur] & \Gamma_{X}\left(J\right)
}
\]

\end{lem}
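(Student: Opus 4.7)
The plan is to unpack the defining equality $\Phi_{H,J}(n) = \mathbb{E}[\#\{i \in \{1,\ldots,n\} : \alpha_{J,n}(H) \text{ fixes } i\}]$ and then translate the condition ``$\alpha_{J,n}(H)$ fixes $i$'' into the geometric condition ``$\eta_{H\to J}^X$ lifts to the component of $\cJ$ containing $(\otimes,i)$''. The translation runs through the dictionary already set up in the preceding discussion: the random $n$-covering $\cJ$ precisely encodes the uniformly random homomorphism $\alpha_{J,n}\in\Hom(J,S_n)$ via the action of $J = \pi_1^X(\Gamma_X(J))$ on the basepoint fiber $\otimes \times \{1,\ldots,n\}$, so the two probability spaces are literally the same space viewed through two lenses.

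First I would fix an index $i \in \{1,\ldots,n\}$ and show that the number of lifts of $\eta_{H\to J}^X$ whose image at the basepoint of $\Gamma_X(H)$ lies in the fiber over $(\otimes,i)$ is exactly the indicator of the event ``$\alpha_{J,n}(H)$ fixes $i$''. The ``at most one'' direction uses Claim \claref{morphism-properties}(2): any two lifts would be two core-graph morphisms $\Gamma_X(H) \to \widehat{\Gamma}_X(J)_i$, hence equal. The ``exists iff fixes'' direction is precisely Claim \claref{core-coverings}\enuref{covering-lifting}: a lift to $\widehat{\Gamma}_X(J)_i$ exists iff $\pi_1^X(\Gamma_X(H)) = H$ is contained in the stabilizer of $(\otimes,i)$ in the $J$-action on the fiber, i.e.\ iff $\alpha_{J,n}(H)$ fixes $i$.

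Summing over $i$ shows that for each realization of $\cJ$, the total number of lifts of $\eta_{H\to J}^X$ into $\cJ$ (ranging over all choices of basepoint in $p^{-1}(\otimes)$) equals the number of common fixed points of the permutations in $\alpha_{J,n}(H)$. Here one needs the small observation that every lift of $\eta_{H\to J}^X$ to $\cJ$ lands in a single connected component $\widehat{\Gamma}_X(J)_i$ (since $\Gamma_X(H)$ is connected and morphisms preserve incidence), and that its image of the basepoint determines $i$ uniquely, so the decomposition by basepoint fiber enumerates all lifts without overcounting.

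Finally, taking expectations over the random covering $\cJ$, which by construction realizes the uniform measure on $\Hom(J,S_n)$, yields
\[
\mathbb{E}[\#\text{lifts of } \eta_{H\to J}^X \text{ to } \cJ] = \mathbb{E}\bigl[\#\{i : \alpha_{J,n}(H) \text{ fixes } i\}\bigr] = \Phi_{H,J}(n).
\]
The only delicate point is the bookkeeping in step three, making sure that ``a lift'' is interpreted as a pointed morphism and that no extra automorphism freedom appears; this is handled by the uniqueness statement in Claim \claref{morphism-properties}(2), which makes the whole argument essentially tautological once the covering/homomorphism correspondence is in hand.
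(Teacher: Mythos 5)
Your proposal is correct and follows the same route as the paper's own argument (which appears in the discussion immediately preceding the lemma): translate "$\alpha_{J,n}(H)$ fixes $i$" into "$\eta_{H\to J}^X$ lifts to $\widehat{\Gamma}_X(J)_i$" via Claim \claref{core-coverings}\enuref{covering-lifting}, then sum over $i$ and take expectations. You spell out one point the paper leaves implicit — that each lift is counted exactly once via Claim \claref{morphism-properties}(2) and connectedness of $\Gamma_X(H)$ — which is a fair bit of added care, but the argument is the same.
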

Note that this characterization of $\Phi_{H,J}$ involves the basis
$X$, although the original definition \eqref{Phi} does not. One
of the corollaries of this lemma is therefore that the average number
of lifts does \emph{not} depend on the basis $X$.

Recall (Section \secref{Mobius-Inversions}) the definition of the
function $L^{X}$, which satisfies $\Phi_{H,J}=\sum_{M\in\XC{H}{J}}L_{M,J}^{X}$
for every $H\covers J$. It turns out that this derivation of $\Phi$
also has a geometrical interpretation. Assume that $\eta_{H\rightarrow J}^{X}$
does lift to $\widehat{\eta}_{i}:\G_{X}\left(H\right)\to\cJ_{i}$.
By Claim \claref{cover-properties}, $\widehat{\eta}_{i}$ decomposes
as a quotient onto $\Gamma_{X}\left(M\right)$, where $M=\pi_{1}^{X}\left(\mathrm{im}\,\widehat{\eta}_{i}\right)$,
followed by an embedding. Moreover, $M$ lies in $\XC{H}{J}$. On
the other hand, if there is some $M\in\XC{H}{J}$ such that $\G_{X}\left(M\right)$
is embedded in $\cJ_{i}$ then such $M$ is unique and $\widehat{\eta}_{i}$
lifts to the composition of $\eta_{H\rightarrow M}^{X}$ with this
embedding. Consequently, 
\begin{align*}
\Phi_{H,J}\left(n\right) & =\textrm{Expected number of lifts of \ensuremath{\eta_{H\to J}^{X}}\,\ to \ensuremath{\cJ}}\\
 & =\negthickspace\sum_{M\in\XC{H}{J}}\negthickspace\negthickspace\textrm{Expected number of \emph{injective} lifts of \ensuremath{\eta_{M\to J}^{X}}\,\ to \ensuremath{\cJ}.}
\end{align*}
Taking the left derivations, we obtain:
\begin{lem}
\label{lem:L-as-lifts}Let $M\covers J$, and let $\cJ$ be a random
$n$-covering space of $\G_{X}\left(J\right)$ in the aforementioned
model from \cite{AL02}. Then, 
\[
L_{M,J}^{X}\left(n\right)=\textrm{The expected number of \emph{injective} lifts of \ensuremath{\eta_{M\to J}^{X}}\,\ to \ensuremath{\cJ}}.
\]
\[
\xymatrix{ & \cJ\ar@{->>}[d]^{p}\\
\Gamma_{X}\left(M\right)\ar[r]_{\eta_{M\rightarrow J}^{X}}\ar@{^{(}-->}[ur] & \Gamma_{X}\left(J\right)
}
\]

\end{lem}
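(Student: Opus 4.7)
The plan is to derive the lemma by combining the geometric interpretation of $\Phi_{H,J}$ from Lemma \lemref{Phi-as-lifts} with the uniqueness of the Möbius inversion that defines $L^X$. The argument was essentially spelled out in the paragraph immediately preceding the statement, so what remains is to make the two halves rigorous and match them.

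First I would establish the factorization of lifts. Given any lift $\widehat{\eta}\colon\Gamma_{X}(H)\to\cJ$ of $\eta_{H\to J}^{X}$, the image $\mathrm{im}\,\widehat{\eta}$ is a sub-core-graph of the connected component $\cJ_{i}$ it lands in (where $i$ is determined by where the basepoint is sent). Setting $M=\pi_{1}^{X}(\mathrm{im}\,\widehat{\eta})$, Claim \claref{cover-properties}(1)--(2) factors $\widehat{\eta}$ uniquely as $\Gamma_{X}(H)\twoheadrightarrow\Gamma_{X}(M)\hookrightarrow\cJ$. Because the covering projection $p$ sends this embedded copy of $\Gamma_{X}(M)$ onto $\mathrm{im}\,\eta_{M\to J}^{X}\subseteq\Gamma_{X}(J)$ via $\eta_{M\to J}^{X}$, we see that $H\covers M\covers J$, i.e.\ $M\in\XC{H}{J}$, and the embedding is an injective lift of $\eta_{M\to J}^{X}$. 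Conversely, any $M\in\XC{H}{J}$ together with an injective lift of $\eta_{M\to J}^{X}$ into $\cJ$ recombines, via precomposition with $\eta_{H\to M}^{X}$, into a lift of $\eta_{H\to J}^{X}$, and these two procedures are mutually inverse.

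Second, denote by $\widetilde{L}_{M,J}(n)$ the expected number of injective lifts of $\eta_{M\to J}^{X}$ into $\cJ$. Summing the bijection above over the random covering $\cJ$ and using linearity of expectation together with Lemma \lemref{Phi-as-lifts} gives
\[
\Phi_{H,J}(n)=\sum_{M\in\XC{H}{J}}\widetilde{L}_{M,J}(n)\qquad\text{for every }H\covers J.
\]

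Third, the function $L^{X}$ is defined by $L^{X}=\mu^{X}*\Phi^{X}$, equivalently by the identity $\Phi_{H,J}=\sum_{M\in\XC{H}{J}}L_{M,J}^{X}$ for all $H\covers J$. Since ``$\covers$'' is locally finite on finitely generated subgroups (Claim \claref{cover-properties}\enuref{O_X(H)-is-finite}), Möbius inversion in the incidence algebra shows that such an identity determines its summand uniquely. Comparing the two expressions for $\Phi_{H,J}$ forces $\widetilde{L}_{M,J}=L_{M,J}^{X}$ for every $M\covers J$, which is exactly the statement of the lemma.

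The only step that requires any care is the bijective factorization of lifts, and this is immediate from the existing results on morphisms of core graphs; there is no substantive obstacle beyond bookkeeping.
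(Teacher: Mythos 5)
Your proposal is correct and follows essentially the same route as the paper: factor each lift $\widehat{\eta}\colon\Gamma_X(H)\to\widehat{\Gamma}_X(J)$ uniquely as a quotient onto $\Gamma_X(M)$ for $M=\pi_1^X(\mathrm{im}\,\widehat{\eta})\in\XC{H}{J}$ followed by an embedding, conclude via linearity of expectation that $\Phi_{H,J}=\sum_{M\in\XC{H}{J}}\widetilde{L}_{M,J}$, and then use invertibility of $\zeta^X$ in the incidence algebra (what the paper calls ``taking the left derivation'') to identify $\widetilde{L}^X$ with $L^X$. The one place you are a bit loose is in justifying $M\covers J$: the observation that $p$ restricts to $\eta^X_{M\to J}$ on the embedded copy of $\Gamma_X(M)$ does not by itself give surjectivity; what is actually needed is that $H\covers J$ and $H\covers M\le J$ imply $M\covers J$ (Claim~\ref{cla:cover-properties}\enuref{Cover-insideCover}), but this is a cosmetic matter and the paper is no more explicit on this point.
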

Unlike the number of lifts in general, the number of injective lifts
does depend on the basis $X$. For instance, consider $M=\la x_{1}x_{2}\ra$
and $J=\la x_{1},x_{2}\ra=\F_{2}$. With the basis $X=\left\{ x_{1},x_{2}\right\} $,
the probability that $\eta_{M\rightarrow J}^{X}$ lifts injectively
to $\cJ_{i}$ equals $\frac{n-1}{n^{2}}$ (Lemma \lemref{L-formula}
shows how to compute this). However, with the basis $Y=\left\{ x_{1}x_{2},x_{2}\right\} $,
the corresponding probability is $\frac{1}{n}$. We also remark that
Lemma \lemref{L-as-lifts} allows a natural extension of $L^{X}$
to pairs $M,J$ such that $M$ does not $X$-cover $J$.

Lemma \lemref{L-as-lifts} allows us to generalize the method used
in \cite{Nic94,LP10,Pud14a} to compute the expected number of fixed
points in $\alpha_{n}\left(w\right)$ (see the notations before Theorem
\nameref{thm:mp_is_prim_ext}). We claim that for $n$ large enough,
$L_{M,J}^{X}\left(n\right)$ is a simple rational expression in $n$. 
\begin{lem}
\label{lem:L-formula}Let $M,J\fg\F_{k}$ such that $M\covers J$,
and let $\eta=\eta_{M\to J}^{X}$ be the core-graph morphism. For
large enough $n$,
\begin{equation}
L_{M,J}^{X}\left(n\right)=\frac{\prod\limits _{v\in V\left(\G_{X}\left(J\right)\right)}\left(n\right)_{\left|\eta^{-1}(v)\right|}}{\prod\limits _{e\in E\left(\G_{X}\left(J\right)\right)}\left(n\right)_{\left|\eta^{-1}(e)\right|}},\label{eq:L-formula}
\end{equation}
where $\left(n\right)_{r}$ is the falling factorial $n\left(n-1\right)\ldots\left(n-r+1\right)$,
and ``large enough $n$'' is $n\ge\max\limits _{e\in E\left(\G_{X}\left(J\right)\right)}\left|\eta^{-1}\left(e\right)\right|$
(so that the denominator does not vanish). \end{lem}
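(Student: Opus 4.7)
The plan is to use Lemma \lemref{L-as-lifts}, so that $L_{M,J}^{X}(n)$ equals the expected number of injective lifts $\widehat{\eta}:\Gamma_{X}(M)\hookrightarrow\widehat{\Gamma}_{X}(J)$ of $\eta=\eta_{M\to J}^{X}$, and then apply linearity of expectation to reduce the computation to counting labelings and computing a per-edge probability. Working in the model from \cite{AL02}, I identify $V(\widehat{\Gamma}_{X}(J))$ with $V(\Gamma_{X}(J))\times\{1,\ldots,n\}$, so that specifying a morphism $\widehat{\eta}$ that lifts $\eta$ is the same as specifying a \emph{labeling} $\ell:V(\Gamma_{X}(M))\to\{1,\ldots,n\}$ with $\widehat{\eta}(u)=(\eta(u),\ell(u))$. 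Such $\widehat{\eta}$ is injective on vertices if and only if, for every $v\in V(\Gamma_{X}(J))$, the restriction of $\ell$ to $\eta^{-1}(v)$ is injective, because distinct fibers already map to distinct vertices of $\widehat{\Gamma}_{X}(J)$. Consequently the number of injective vertex-labelings is exactly
\[
\prod_{v\in V(\Gamma_{X}(J))}(n)_{|\eta^{-1}(v)|}.
\]

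Next I compute, for a fixed such labeling $\ell$, the probability that $\ell$ actually defines a morphism, i.e.\ that for every edge $e\in E(\Gamma_{X}(J))$ and every preimage $\tilde{e}\in\eta^{-1}(e)$ we have $\sigma_{e}(\ell(\mathrm{origin}(\tilde{e})))=\ell(\mathrm{terminus}(\tilde{e}))$, where $\sigma_{e}\in S_{n}$ is the independent uniform permutation attached to $e$ in the model. Since the $\sigma_{e}$ are independent over the edges of $\Gamma_{X}(J)$, this probability factorizes as a product over $e$. For a single edge $e$ with $r=|\eta^{-1}(e)|$ preimages $\tilde{e}_{1},\ldots,\tilde{e}_{r}$, we need $\sigma_{e}$ to satisfy $r$ prescribed value assignments. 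The key point here is that $\eta$ is an immersion (Claim \claref{morphism-properties}(5)): distinct edges of $\Gamma_{X}(M)$ mapping to the same edge $e$ have pairwise distinct origins and pairwise distinct termini. Combined with the injectivity of $\ell$ on each fiber, this guarantees that the $r$ prescribed inputs are distinct and the $r$ prescribed outputs are distinct, so the constraints are mutually consistent and the probability equals $(n-r)!/n!=1/(n)_{r}$, provided $n\geq r$.

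Multiplying over all edges of $\Gamma_{X}(J)$ and summing the resulting probability over all injective labelings gives
\[
L_{M,J}^{X}(n)=\sum_{\ell\text{ injective on fibers}}\prod_{e\in E(\Gamma_{X}(J))}\frac{1}{(n)_{|\eta^{-1}(e)|}}=\frac{\prod_{v}(n)_{|\eta^{-1}(v)|}}{\prod_{e}(n)_{|\eta^{-1}(e)|}},
\]
exactly as claimed, valid whenever $n\geq\max_{e}|\eta^{-1}(e)|$ so that the denominator does not vanish.

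The only place that requires genuine care is the per-edge probability computation: one must simultaneously check that the $r$ constraints imposed on $\sigma_{e}$ really are compatible (which uses the immersion property of core-graph morphisms) and that the events for different edges are independent (which uses that distinct edges of $\Gamma_{X}(J)$ carry independent random permutations in the \cite{AL02} model). Everything else is bookkeeping via linearity of expectation.
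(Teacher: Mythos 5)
Your proof is correct and takes essentially the same route as the paper's: count the vertex-labelings that are injective on fibers to get the numerator, then use the independent-permutation model to compute the per-edge probability that a fixed labeling extends to a full lift. You are slightly more explicit than the paper in explaining \emph{why} the $|\eta^{-1}(e)|$ constraints on $\sigma_{e}$ are non-colliding (invoking the immersion property of Claim \claref{morphism-properties}(5) together with injectivity of the labeling on fibers), a point the paper asserts but does not spell out.
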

\begin{proof}
Let $v$ be a vertex in $\G_{X}\left(J\right)$ and consider the fiber
$\eta^{-1}\left(v\right)$ in $\G_{X}\left(M\right)$. For every injective
lift $\widehat{\eta}:\G_{X}\left(M\right)\hookrightarrow\cJ$, the
fiber $\eta^{-1}\left(v\right)$ is mapped injectively into the fiber
$p^{-1}\left(v\right)$. The number of such injections is 
\[
\left(n\right)_{|\eta^{-1}(v)|}=n(n-1)\ldots(n-|\eta^{-1}(v)|+1),
\]
and therefore the number of injective lifts of $\eta\big|_{V\left(\Gamma_{X}\left(M\right)\right)}$
into $V\left(\cJ\right)$ is the numerator of \eqref{L-formula}.

We claim that any such injective lift has a positive probability of
extending to a full lift of $\eta$: all one needs is that the fiber
above every edge of $\G_{X}\left(J\right)$ satisfy some constraints.
To get the exact probability, we return to the more ``wasteful''
version of the model for a random $n$-covering of $\G_{X}\left(J\right)$,
the model in which we choose a random permutation for every edge of
the base graph. Let $\widehat{\eta}:V\left(\G_{X}\left(M\right)\right)\hookrightarrow V\left(\cJ\right)$
be an injective lift of the vertices of $\G_{X}\left(M\right)$ as
above, and let $e$ be some edge of $\G_{X}\left(J\right)$. If $\widehat{\eta}$
is to be extended to $\eta^{-1}\left(e\right)$, the fiber above $e$
in $\cJ$ must contain, for every $\left(u,v\right)\in\eta^{-1}\left(e\right)$,
the edge $\left(\widehat{\eta}\left(u\right),\widehat{\eta}\left(v\right)\right).$

Thus, the random permutation $\sigma\in S_{n}$ which determines the
perfect matching above $e$ in $\cJ$, must satisfy $\left|\eta^{-1}\left(e\right)\right|$
non-colliding constraints of the form $\sigma\left(i\right)=j$. Whenever
$n\geq\left|\eta^{-1}\left(e\right)\right|$ (which we assume), a
uniformly random permutation in $S_{n}$ satisfies such constraints
with probability
\[
\frac{1}{\left(n\right)_{\left|\eta^{-1}(e)\right|}}.
\]
This shows the validity of \eqref{L-formula}.
\end{proof}
This immediately gives a formula for $\Phi_{H,J}$ as a rational function:
\begin{cor}
\label{cor:Phi-formula}Let $H,J\fg\F_{k}$ such that $H\covers J$.
Then, for large enough $n$,
\[
\Phi_{H,J}\left(n\right)=\negthickspace\sum_{M\in\XC{H}{J}}\negthickspace L_{M,J}^{X}\left(n\right)=\sum_{M\in\XC{H}{J}}\frac{\prod\limits _{v\in V\left(\G_{X}\left(J\right)\right)}\left(n\right)_{\left|\left(\eta_{M\to J}^{X}\right)^{-1}(v)\right|}}{\prod\limits _{e\in E\left(\G_{X}\left(J\right)\right)}\left(n\right)_{\left|\left(\eta_{M\to J}^{X}\right)^{-1}(e)\right|}}.
\]

\end{cor}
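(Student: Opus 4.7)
The corollary is an immediate consequence of the two ingredients already developed. The plan is simply to assemble Lemma \lemref{L-as-lifts} (recast analytically as Lemma \lemref{L-formula}) with the Möbius-inversion identity for $L^X$ established in Section \secref{Mobius-Inversions}.

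First I would invoke the leftmost equality in \eqref{mobius}, which by definition of the left derivation $L^{X}=\mu^{X}*\Phi^{X}$ gives
\[
\Phi_{H,J}(n)\;=\;\sum_{M\in\XC{H}{J}} L_{M,J}^{X}(n).
\]
This step uses only the fact that $\covers$ is locally finite on f.g.\ subgroups (Claim \claref{cover-properties}\enuref{O_X(H)-is-finite}), so the interval $\XC{H}{J}$ is a finite set and the Möbius convolution is well defined. Next, for each $M$ in this finite interval I would apply Lemma \lemref{L-formula} to express $L_{M,J}^{X}(n)$ by the falling-factorial ratio
\[
L_{M,J}^{X}(n)\;=\;\frac{\prod_{v\in V(\G_{X}(J))}(n)_{|(\eta_{M\to J}^{X})^{-1}(v)|}}{\prod_{e\in E(\G_{X}(J))}(n)_{|(\eta_{M\to J}^{X})^{-1}(e)|}},
\]
valid provided $n$ is at least the maximum fiber size over edges, i.e.\ $n\ge \max_{e}|(\eta_{M\to J}^{X})^{-1}(e)|$. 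Substituting yields the stated closed form.

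The only point requiring any attention is the uniformity of the hypothesis ``large enough $n$'' across all $M\in\XC{H}{J}$. Because $\XC{H}{J}$ is finite and each $\G_{X}(M)$ is finite (Claim \claref{core-graphs-properties}\enuref{finite-rk-graph}), the quantity
\[
N_{0}\;=\;\max_{M\in\XC{H}{J}}\;\max_{e\in E(\G_{X}(J))}\bigl|(\eta_{M\to J}^{X})^{-1}(e)\bigr|
\]
is a finite integer, and the formula holds simultaneously for every $M$ whenever $n\ge N_{0}$.

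There is essentially no obstacle here: all the substantive work was done in Lemmas \lemref{Phi-as-lifts} and \lemref{L-as-lifts} (geometric interpretation of $\Phi$ and $L^X$ in terms of lifts and injective lifts) and in Lemma \lemref{L-formula} (the explicit rational expression coming from counting random perfect matchings on edge-fibers). The corollary itself is a one-line bookkeeping step combining these with the definitional relation $\Phi^{X}=\zeta^{X}*L^{X}$.
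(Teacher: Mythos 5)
Your proposal is correct and matches the paper's (implicit) argument: the corollary follows immediately by substituting the rational expression of Lemma \lemref{L-formula} into the Möbius identity $\Phi^{X}=\zeta^{X}*L^{X}$ from \eqref{mobius}. Your observation about uniformity of "large enough $n$" is also addressed in the paper just after the corollary, where it is noted that since $H$ $X$-covers every $M\in\XC{H}{J}$, the maximum edge-fiber size over all $M$ is already achieved at $M=H$, so one may take $n\ge\max_{e\in E(\G_{X}(J))}\bigl|(\eta_{H\to J}^{X})^{-1}(e)\bigr|$.
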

Since $H$ $X$-covers every intermediate $M\in\XC{H}{J}$, the largest
fiber above every edge of $\G_{X}\left(J\right)$ is obtained in $\G_{X}\left(H\right)$
itself. Thus, ``large enough $n$'' in this Corollary can be replaced
by $n\ge\max\limits _{e\in E\left(\G_{X}\left(J\right)\right)}\left|\left(\eta_{H\to J}^{X}\right)^{-1}\left(e\right)\right|$.

In fact, Corollary \ref{cor:Phi-formula} applies, with slight modifications,
to every pair of f.g. subgroups $H\le J$: Lemma \ref{lem:Phi-as-lifts}
holds in this more general case, that is $\Phi_{H,J}$ is equal to
the expected number of lifts of $\Gamma_{X}\left(H\right)$ to the
random $n$-covering $\cJ$. The image of each lift (with the image
of $\otimes$ as basepoint) is a core graph which is a quotient of
$\Gamma_{X}\left(H\right)$, and so corresponds to a subgroup $M$
such that $H\covers M\le J$. In explaining the rational expression
in Lemma \ref{lem:L-formula} we did not need $M$ to cover $J$.
Thus, for every $H\le J$, both finitely generated, 
\begin{equation}
\Phi_{H,J}\left(n\right)=\sum_{M\,:\, H\covers M\le J}\frac{\prod\limits _{v\in V\left(\G_{X}\left(J\right)\right)}\left(n\right)_{\left|\left(\eta_{M\to J}^{X}\right)^{-1}(v)\right|}}{\prod\limits _{e\in E\left(\G_{X}\left(J\right)\right)}\left(n\right)_{\left|\left(\eta_{M\to J}^{X}\right)^{-1}(e)\right|}}.\label{eq:phi-formula-general}
\end{equation}

Corollary \corref{Phi-formula} yields in particular a straight-forward
algorithm to obtain a rational expression in $n$ for $\Phi_{H,J}\left(n\right)$
(valid for large enough $n$). For example, consider $H=\left\langle x_{1}x_{2}x_{1}^{-1}x_{2}^{-1}\right\rangle $
and $\F_{2}=\left\langle x_{1},x_{2}\right\rangle $. The interval
$\XC{H}{\F_{2}}$ consists of seven subgroups, as depicted in Figure
\figref{commutator-fringe}. Following the computation in Corollary
\corref{Phi-formula}, we get that for $n\ge2$ (we scan the quotients
in Figure \figref{commutator-fringe} top-to-bottom and in each row
left-to-right):
\begin{align*}
\Phi_{H,\F_{2}}\left(n\right)= & \:\frac{\left(n\right)_{4}}{\left(n\right)_{2}\left(n\right)_{2}}+\frac{\left(n\right)_{2}}{\left(n\right)_{2}\left(n\right)_{1}}+\frac{\left(n\right)_{2}}{\left(n\right)_{1}\left(n\right)_{2}}\\
 & \:+\frac{\left(n\right)_{3}}{\left(n\right)_{2}\left(n\right)_{2}}+\frac{\left(n\right)_{3}}{\left(n\right)_{2}\left(n\right)_{2}}+\frac{\left(n\right)_{2}}{\left(n\right)_{2}\left(n\right)_{2}}+\frac{\left(n\right)_{1}}{\left(n\right)_{1}\left(n\right)_{1}}\\
= & \:\frac{n}{n-1}=1+\frac{1}{n}+O\left(\frac{1}{n^{2}}\right)
\end{align*}
This demonstrates Theorem \thmref{avg_fixed_points} and Table \tabref{Primitivity-Rank-and-fixed-points}
for $H=\left\langle x_{1}x_{2}x_{1}^{-1}x_{2}^{-1}\right\rangle $
(recall the discussion following Lemma \lemref{detecting_AE}, where
it is shown that $\pi\left(H\right)=2$ and that $\crit\left(H\right)=\left\{ \F_{2}\right\} $).

The explicit computation of $\Phi$ yields an effective version of
Theorem \nameref{thm:mp_is_prim_ext}: 
\begin{cor}
\label{cor:effective-bound-subgroups}Let $H\fg\F_{k}$, and let $\ell$
denote the number of edges in $\Gamma_{X}\left(H\right)$. Then $H\ff\F_{k}$
iff $\Phi_{H,\F_{k}}\left(n\right)=n^{-\rrk H}$ for $n\leq\ell+\rrk H$.
In particular, Proposition \ref{prop:effective-bound} follows.\end{cor}
\begin{proof}
Assume that $\Phi_{H,\F_{k}}\left(n\right)=n^{-\rrk H}$ holds for
$n\leq\ell+\rrk H$, and denote 
\begin{equation}
\Phi'\left(n\right)=\sum_{M\in\XF{H}}\frac{\left(n\right)_{\left|V\left(\Gamma_{X}\left(M\right)\right)\right|}}{\prod_{j=1}^{k}\left(n\right)_{\left|E_{j}\left(\Gamma_{X}\left(M\right)\right)\right|}},\label{eq:expression for M}
\end{equation}
where $E_{j}\left(\Gamma\right)$ are the $j$-edges in $\Gamma$.
By Corollary \ref{cor:Phi-formula}, $\Phi'\left(n\right)=\Phi_{H,\F_{k}}\left(n\right)$
for $n\geq n_{0}=\max_{j=1..k}\left|E_{j}\left(\Gamma_{X}\left(H\right)\right)\right|$,
and in particular $\Phi'\left(n\right)=n^{-\rrk H}$ for $n_{0}\leq n\leq\ell+\rrk H$.
We proceed to show that $\Phi'\left(n\right)\equiv n^{-\rrk H}$,
which implies $\Phi_{H,\F_{k}}\left(n\right)=n^{-\rrk H}$ for $n\geq n_{0}$.
The conclusion then follows by Theorem \nameref{thm:mp_is_prim_ext}
(which is proved in the next section).

The number of $j$-edges in every quotient of $\Gamma_{X}\left(H\right)$
is at most $E_{j}\left(\Gamma_{X}\left(H\right)\right)$, so that
$\Phi'\left(n\right)g\left(n\right)$ is a polynomial for $g\left(n\right)=\prod_{j=1}^{k}\left(n\right)_{\left|E_{j}\left(\Gamma_{X}\left(H\right)\right)\right|}$.
We would like to establish 
\begin{equation}
\Phi'\left(n\right)g\left(n\right)n^{\rrk\left(H\right)}\equiv g\left(n\right),\label{eq:f_and_g}
\end{equation}
and we note that $\deg g=\ell$, and $\deg\Phi'\leq\max_{M\in\XF{H}}-\rrk\left(M\right)\leq0$
follows from Claim \ref{cla:core-graphs-properties}(\ref{enu:euler})
(assuming $H\neq id$). Therefore, the degrees of both sides of (\ref{eq:f_and_g})
are at most $\ell+\rrk H$, and it suffices to show they agree at
$\ell+\rrk H+1=\ell+\rk H$ points. We already know that they agree
for $n_{0}\leq n\leq\ell+\rrk H$. For $0\leq n<n_{0}$ it is clear
that $g\left(n\right)=0$. It turns out that the l.h.s.~vanishes
as well for these values of $n$. Expanding the l.h.s.\ gives 
\begin{equation}
n^{\rrk H}\cdot\sum_{M\in\XF{H}}\left(n\right)_{\left|V\left(\Gamma_{X}\left(M\right)\right)\right|}\prod_{j=1}^{k}\left(n-\left|E_{j}\left(\Gamma_{X}\left(M\right)\right)\right|\right)_{\left|E_{j}\left(\Gamma_{X}\left(H\right)\right)\right|-\left|E_{j}\left(\Gamma_{X}\left(M\right)\right)\right|},\label{eq:goal-nefesh}
\end{equation}
and each term in the sum vanishes for $0\leq n<n_{0}$: Choose $1\leq j\leq k$
for which $\left|E_{j}\left(\Gamma_{X}\left(H\right)\right)\right|=n_{0}$.
For each $M\in\XF{H}$ either $\left|E_{j}\left(\Gamma_{X}\left(M\right)\right)\right|\leq n$,
in which case $\left(n-\left|E_{j}\left(\Gamma_{X}\left(M\right)\right)\right|\right)_{n_{0}-\left|E_{j}\left(\Gamma_{X}\left(M\right)\right)\right|}=0$,
or $\left|E_{j}\left(\Gamma_{X}\left(M\right)\right)\right|>n$; as
different $j$-edges must have different origins, the latter implies
that $\left|V\left(\Gamma_{X}\left(M\right)\right)\right|>n$, hence$\left(n\right)_{\left|V\left(\Gamma_{X}\left(M\right)\right)\right|}$
vanishes.\end{proof}
\begin{rem}
The discussion in this section suggests a generalization of our analysis
to finite groups $G$ other than $S_{n}$. For any (finite) faithful
$G$-set $S$, one can consider a random $|S|$-covering of $\G_{X}\left(J\right)$.
The fiber above every edge is chosen according to the action on $S$
of a (uniformly distributed) random element of $G$. In this more
general setting we also get a one-to-one correspondence between $\Hom\left(\F_{k},G\right)$
and $|S|$-coverings. Although the computation of $L^{X}$ and of
$\Phi$ might be more involved, this suggests a way of analyzing words
which are measure preserving w.r.t.\ $G$. 
\end{rem}

\section{\label{sec:proof}The proof of Theorem \thmref{avg_fixed_points}}

The last major ingredient of the proof of our main result, Theorem
\thmref{avg_fixed_points}, is an analysis of $C^{X}$, the double-sided
derivation of $\Phi$. Recall Definition \defref{distance} where
the $X$-distance $\rho_{X}\left(H,J\right)$ was defined for every
$H,J\fg\F_{k}$ with $H\covers J$. 
\begin{prop}
\label{prop:order-magnitude-C}Let $M,N\fg\F_{k}$ satisfy $M\covers N$.
Then 
\[
C_{M,N}^{X}\left(n\right)=O\left(\frac{1}{n^{\rrk(M)+\rho_{X}(M,N)}}\right)
\]

\end{prop}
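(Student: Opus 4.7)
\medskip
\noindent\textbf{Proof plan for Proposition \ref{prop:order-magnitude-C}.}

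The starting point is the identity $C^{X}=L^{X}*\mu^{X}$, which reads
\[
C_{M,N}^{X}(n)=\sum_{N'\in\XC{M}{N}}L_{M,N'}^{X}(n)\,\mu^{X}(N',N).
\]
I would insert into each term the explicit rational expression from Lemma~\ref{lem:L-formula} and expand it asymptotically. Writing $\eta_{M\to N'}^{X}=\eta$, using $(n)_{r}=n^{r}\prod_{i=1}^{r-1}(1-i/n)$ and $\sum_{v}|\eta^{-1}(v)|-\sum_{e}|\eta^{-1}(e)|=|V(\G_X(M))|-|E(\G_X(M))|=-\widetilde{\rk}(M)$, one obtains
\[
L_{M,N'}^{X}(n)=n^{-\widetilde{\rk}(M)}\Bigl(1+\sum_{j\ge 1}a_{j}(N')\,n^{-j}\Bigr),
\]
where each coefficient $a_{j}(N')$ is a universal polynomial (with integer coefficients) in the fiber sizes $\bigl\{|\eta^{-1}(v)|\bigr\}_{v}$ and $\bigl\{|\eta^{-1}(e)|\bigr\}_{e}$, and crucially $a_{0}(N')\equiv 1$. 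Thus the $n^{-\widetilde{\rk}(M)}$ term in $C_{M,N}^{X}(n)$ is $\sum_{N'}\mu^{X}(N',N)=\delta(M,N)=0$ (when $M\neq N$), and the problem reduces to showing that for every $j<\rho_X(M,N)$,
\[
\Sigma_{j}\;:=\;\sum_{N'\in\XC{M}{N}}a_{j}(N')\,\mu^{X}(N',N)=0.
\]

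The next step is to rewrite $a_{j}(N')$ combinatorially. Using the identity $\binom{|\eta^{-1}(v)|}{k}=\#\{k\text{-subsets of }V(\G_X(M))\text{ collapsed to }v\text{ under }\eta\}$, and similarly for edges, I would expand every monomial $\prod_{i}\binom{|\eta^{-1}(v_i)|}{k_i}\prod_{i}\binom{|\eta^{-1}(e_i)|}{l_i}$ appearing in $a_{j}(N')$ as a sum of indicators $\chi_{S}(N')$, where $S$ runs over tuples consisting of pairwise disjoint subsets of $V(\G_X(M))$ and of $E(\G_X(M))$, and $\chi_{S}(N')=1$ iff each subset in $S$ is mapped to a single vertex or edge by $\eta$. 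Define the \emph{cost} of such an $S$ to be the minimum $\|P\|$ over vertex-partitions $P$ of $V(\G_X(M))$ whose associated quotient collapses every subset in $S$; concretely, a $k$-subset of vertices to be merged contributes $k-1$, and an $l$-subset of same-labeled edges contributes at most $2(l-1)$ (via their origins and termini), with further reductions from forced folds. The main combinatorial step is to show that every monomial appearing in $a_{j}$ corresponds to a tuple $S$ of cost at most $j$; this follows from a direct inspection of how each factor $(1-i/n)$ and $(1-i/n)^{-1}$ in the expansion of $L_{M,N'}^{X}(n)$ contributes one unit of cost to the exponent of $1/n$ it multiplies. I expect this careful accounting between Stirling coefficients and identification cost to be the main obstacle of the proof.

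Once the cost bound is established, the Möbius sum on each term is immediate. For any $S$, the function $N'\mapsto\chi_{S}(N')$ is upward-closed on $\XC{M}{N}$: either it is identically $0$ (when the identifications in $S$ are not all realized even in $N$), or it is the indicator of an interval $[N_{S},N]_{\Xcov}$, where $N_{S}$ is the smallest $X$-quotient of $M$ in which the identifications in $S$ take place. In the latter case
\[
\sum_{N'\in\XC{M}{N}}\chi_{S}(N')\mu^{X}(N',N)=\sum_{N'\in\XC{N_{S}}{N}}\mu^{X}(N',N)=\delta(N_{S},N),
\]
so it is nonzero only if $N_{S}=N$, which forces $\rho_X(M,N)\le\text{cost}(S)\le j$. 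Hence $\Sigma_{j}=0$ whenever $j<\rho_X(M,N)$. This kills every power of $1/n$ strictly smaller than $n^{-\widetilde{\rk}(M)-\rho_X(M,N)}$, yielding
\[
C_{M,N}^{X}(n)=O\!\left(n^{-\widetilde{\rk}(M)-\rho_X(M,N)}\right),
\]
as claimed.
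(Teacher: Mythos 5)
Your overall strategy matches the paper's: expand $L_{M,N'}^{X}(n)$ as a power series in $n^{-1}$, interpret the coefficients combinatorially as sums of indicator functions attached to intermediate $X$-quotients, and show that all powers of $n^{-1}$ below $\rho_{X}(M,N)$ vanish after the M\"obius step. Your concluding M\"obius computation
$\sum_{N'\in\XC{N_{S}}{N}}\mu^{X}(N',N)=\delta(N_{S},N)$ and the reduction to ``cost$(S)\le j$'' is the same skeleton the paper uses, except that the paper avoids computing with $\mu^{X}$ directly: it groups the terms of the huge sum by the subgroup $N=\pi_{1}^{X}\bigl(\nicefrac{\Gamma_{X}(M)}{\langle\sigma_{0},\dots,\sigma_{t}\rangle}\bigr)$ they generate, proves the grouped sum $\widetilde{C}_{M,J}^{X}(N)$ is independent of $J$, and invokes uniqueness of the right M\"obius derivation to conclude $\widetilde{C}^{X}=C^{X}$. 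The two endgames are equivalent; yours is perhaps more transparent, while theirs also yields an exact closed formula for $C_{M,N}^{X}$ as a bonus.

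The real issue is the step you flag yourself as ``the main obstacle,'' and it is a genuine gap as written. First, the decomposition of $a_{j}(N')$ into indicators does not go cleanly through binomial coefficients: the coefficient of $n^{-j}$ in $\prod_{v}(n)_{|\eta^{-1}(v)|}/n^{|V_{M}|}$ is $(-1)^{j}\bigl[V_{M}\bigr]_{j}^{\eta}$, i.e.\ the number of $\eta$-subordinate permutations $\sigma\in\Sym(V_{M})$ with $\|\sigma\|=j$, and the natural indicator associated with a term is the collection $S$ of non-trivial cycle supports of $\sigma$ (weighted by $\prod_{C}(|C|-1)!$, which is not a product of binomials). The paper's Stirling/permutation expansion is precisely what makes ``each factor contributes one unit of cost to the exponent of $n^{-1}$'' correct, because $\|\sigma\|=\sum_{C}(|C|-1)$ already equals the partition cost of collapsing the cycle supports. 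Second, your cost bound of $2(l-1)$ for an $l$-subset of same-labeled edges is too weak to give cost$(S)\le j$: you need exactly $l-1$. This is the observation in the paper's footnote preceding~\eqref{eq:quotient_by_tuple}: merging the $l$ origins (cost $l-1$) forces a Stallings fold that identifies the $l$ same-labeled edges and hence their termini, so no extra cost is incurred. Your phrase ``with further reductions from forced folds'' gestures at exactly this, but without pinning it down the proposed inequality cost$(S)\le j$ does not follow from your $2(l-1)$ accounting. Once you (a) expand via subordinate permutations rather than binomials, and (b) use the fold to get the sharp $l-1$ bound, your argument closes up and is essentially the paper's proof.
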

Section \subref{C} is dedicated to the proof of this proposition.
Before getting there, we show how it practically finishes the proof
of our main result. We do this with the following final step:
\begin{prop}
\label{prop:order-magnitude-R}Let $H,N\fg\F_{k}$ satisfy $H\alg N$.
Then 
\[
R_{H,N}\left(n\right)=\frac{1}{n^{\rrk\left(N\right)}}+O\left(\frac{1}{n^{\rrk\left(N\right)+1}}\right)
\]
\end{prop}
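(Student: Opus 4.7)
The plan is to expand $R_{H,N}$ via the identity $R^X=\zeta^X*C^X$ and then extract the leading contribution from the resulting finite sum. Concretely,
\[
R_{H,N}(n)=\sum_{M\in\XC{H}{N}}C_{M,N}^X(n),
\]
and by Claim \claref{cover-properties}\enuref{O_X(H)-is-finite} the interval $\XC{H}{N}$ is finite, so it suffices to analyze each summand.

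I would first single out the diagonal term $C_{N,N}^X$. Since $\eta_{N\to N}^X$ is the identity on $\Gamma_X(N)$, every vertex- and edge-fiber has size one, and Lemma \lemref{L-formula} gives
\[
L_{N,N}^X(n)=\frac{\prod_{v}(n)_1}{\prod_{e}(n)_1}=n^{\chi(\Gamma_X(N))}=\frac{1}{n^{\widetilde{\rk}(N)}},
\]
using Claim \claref{core-graphs-properties}\enuref{euler}. Because $C^X=L^X*\mu^X$, the diagonal entry of $C^X$ at $N$ equals $L_{N,N}^X(n)$, which is precisely the target leading term $1/n^{\widetilde{\rk}(N)}$.

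The remaining step is to show that every other $M\in\XC{H}{N}$ contributes only to the error. Proposition \propref{order-magnitude-C} gives
\[
C_{M,N}^X(n)=O\!\left(\frac{1}{n^{\widetilde{\rk}(M)+\rho_X(M,N)}}\right),
\]
while Claim \claref{bounds_for_rho} supplies $\rho_X(M,N)\ge\rk(N)-\rk(M)$. The decisive observation is that this inequality is \emph{strict} whenever $M\neq N$: by Theorem \thmref{rho}, equality would force $M\ff N$, and combined with $H\le M\lneqq N$ this would exhibit a proper intermediate free factor of $N$ containing $H$, contradicting $H\alg N$. Hence every $M\neq N$ in the sum satisfies $\widetilde{\rk}(M)+\rho_X(M,N)\ge\widetilde{\rk}(N)+1$, so each such term is $O(1/n^{\widetilde{\rk}(N)+1})$, and finiteness of the interval absorbs the whole off-diagonal contribution into the error term.

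The bulk of the work has already been invested in Proposition \propref{order-magnitude-C}; the present argument is essentially the clean extraction of the leading coefficient, where the only real ingredient beyond the bound on $C^X$ is the rigidity of Theorem \thmref{rho} together with the defining property of algebraic extensions, used to rule out $M\ff N$ for $M\neq N$. Thus the main obstacle is not this proposition but rather the analysis underlying the input bound on $C^X$.
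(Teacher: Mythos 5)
Your proof is correct and follows essentially the same route as the paper's: expanding $R_{H,N}=\sum_{M}C_{M,N}^{X}$, isolating the diagonal term $C_{N,N}^{X}=n^{-\widetilde{\rk}(N)}$, and using Theorem \thmref{rho} together with the definition of algebraic extension to make the inequality $\rho_{X}(M,N)\ge\rk(N)-\rk(M)$ strict for $M\neq N$, so that Proposition \propref{order-magnitude-C} pushes every off-diagonal term into the error. The only cosmetic difference is that you compute the diagonal term via Lemma \lemref{L-formula} applied to the identity morphism, whereas the paper notes directly that $C_{N,N}^{X}=\Phi_{N,N}(n)=n^{-\widetilde{\rk}(N)}$.
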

\begin{proof}
Let $X$ be some basis of $\F_{k}$. Recall that $R=\zeta^{X}*C^{X}$,
i.e. 
\[
R_{H,N}\left(n\right)=\sum_{M\in\XC{H}{N}}C_{M,N}^{X}\left(n\right).
\]
For $M=N$ we have $C_{N,N}^{X}\left(n\right)=R_{N,N}\left(n\right)=\Phi_{N,N}\left(n\right)=n^{-\rrk\left(N\right)}$
(the last equality follows from the fact that $m$ independent uniform
permutations fix a point with probability $n^{-m}$). For any other
$M$, i.e.\ $M\in\XCO{H}{N}$, the fact that $N$ is an algebraic
extension of $H$ means that $M$ is \emph{not} a free factor of $N$
and therefore, by Theorem \thmref{rho} (and \eqref{rk-rk_le_rho}),
$\rho_{X}\left(M,N\right)\ge\rrk\left(N\right)-\rrk\left(M\right)+1$.
Proposition \propref{order-magnitude-C} then shows that 
\[
C_{M,N}^{X}\left(n\right)\in O\left(\frac{1}{n^{\rrk(M)+\rho_{X}(M,N)}}\right)\subseteq O\left(\frac{1}{n^{\rrk\left(N\right)+1}}\right).
\]
Hence,
\[
R_{H,N}\left(n\right)=C_{N,N}^{X}\left(n\right)+\sum_{M\in\XCO{H}{N}}C_{M,N}^{X}\left(n\right)=\frac{1}{n^{\rrk\left(N\right)}}+O\left(\frac{1}{n^{\rrk\left(N\right)+1}}\right).
\]

\end{proof}
The proof of Theorem \thmref{avg_fixed_points} is now at hand. For
every $H,J\fg\F_{k}$ with $H\le J$, by \eqref{phi=00003Dsum-of-alg}
and Proposition \propref{order-magnitude-R}, 
\begin{align*}
\Phi_{H,J}\left(n\right) & =\sum_{N\,:\, H\alg N\le J}R_{H,N}\left(n\right)\\
 & =R_{H,H}\left(n\right)+\sum_{N\,:\, H\algne N\le J}R_{H,N}\left(n\right)\\
 & =\frac{1}{n^{\rrk\left(H\right)}}+\sum_{N\,:\, H\algne N\le J}\frac{1}{n^{\rrk(N)}}+O\left(\frac{1}{n^{\rrk(N)+1}}\right).
\end{align*}
For $J=\F_{k}$ we can be more concrete. Recall that the $H$-critical
groups, $\crit\left(H\right)$, are the algebraic extensions of $H$
of minimal rank (other than $H$ itself), and this minimal rank is
$\pi\left(H\right)$. Therefore, 
\begin{align*}
\Phi_{H,\F_{k}}\left(n\right) & =\frac{1}{n^{\rrk(H)}}+\sum_{N\in\left(H,\infty\right)_{alg}}\frac{1}{n^{\rrk(N)}}+O\left(\frac{1}{n^{\rrk(N)+1}}\right)\\
 & =\frac{1}{n^{\rrk(H)}}+\frac{\left|\crit(H)\right|}{n^{\rp(H)}}+O\left(\frac{1}{n^{\rp(H)+1}}\right).
\end{align*}
This establishes our main results: Theorem \thmref{avg_fixed_points},
Theorem \thmref{mp_is_prim} and all their corollaries.

\subsection{\label{sub:C}The analysis of $C_{M,N}^{X}$}

In this subsection we look into $C^{X}$, the double-sided derivation
of $\Phi$, and establish Proposition \propref{order-magnitude-C},
which bounds the order of magnitude of $C_{M,N}^{X}$. Recall that
by definition $C^{X}=L^{X}*\mu^{X}$, which is equivalent to 
\begin{equation}
L_{M,J}^{X}=\sum_{N\in\XC{M}{J}}C_{M,N}^{X}\qquad\left(\forall M\covers J\right)\label{eq:L=00003Dsum_C}
\end{equation}
We derive a combinatorial meaning of $C_{M,N}^{X}$ from this relation.
To obtain this, we further analyze the rational expression \eqref{L-formula}
for $L_{M,J}^{X}$ and write it as a formal power series. Then, using
a combinatorial interpretation of the terms in this series, we attribute
each term to some $N\in\XC{M}{J}$, and show that for every $N\in\XC{M}{J}$,
the sum of terms attributed to $N$ is nothing but $C_{M,N}^{X}$.
Finally, we use this combinatorial interpretation of $C_{M,N}^{X}$
to estimate its order of magnitude.

\subsubsection*{Rewriting $L_{M,J}^{X}$ as a power series in $n^{-1}$}

Consider the numerator and denominator of \eqref{L-formula}: these
are products of expressions of the type $\left(n\right)_{r}$. It
is a classical fact that 
\[
\left(n\right)_{r}=\sum_{j=1}^{r}(-1)^{r-j}\left[{r\atop j}\right]n^{j}
\]
where $\left[{r\atop j}\right]$ is the \emph{unsigned} \emph{Stirling
number of the first kind.} That is, $\left[{r\atop j}\right]$ is
the number of permutations in $S_{r}$ with exactly $j$ cycles (see,
for instance, \cite{vLW01}, Chapter 13). 

We introduce the notation $\left[r\right]_{j}\overset{{\scriptscriptstyle def}}{=}\left[{r\atop r-j}\right]$,
which is better suited for our purposes. The cycles of a permutation
$\sigma\in S_{r}$ constitute a partition $P_{\sigma}$ of $\left\{ 1,\ldots,r\right\} $.
We define $\left\Vert \sigma\right\Vert =\left\Vert P_{\sigma}\right\Vert $
(recall \eqref{Partition_norm}), and it is immediate that $\left[r\right]_{j}$
is the number of permutations $\sigma\in S_{r}$ with $\left\Vert \sigma\right\Vert =j$.
It is also easy to see that $\left\Vert \sigma\right\Vert $ is the
minimal number of transpositions needed to be multiplied in order
to obtain $\sigma$. Therefore, $\left[r\right]_{j}$ is the number
of permutations in $S_{r}$ which can be expressed as a product of
$j$ transpositions, but no less. In terms of this notation, we obtain
\[
\left(n\right)_{r}=n^{r}\sum_{j=0}^{r-1}\left(-1\right)^{j}\left[r\right]_{j}n^{-j}.
\]
The product of several expressions of this form, namely $\left(n\right)_{r_{1}}\left(n\right)_{r_{2}}\ldots\left(n\right)_{r_{\ell}}$,
can be written as a polynomial in $n$ whose coefficients have a similar
combinatorial meaning, as follows. Let $X$ be a set, and $\varphi:X\rightarrow\left\{ 1,\ldots,\ell\right\} $
some function with fibers of sizes $\left|\varphi^{-1}\left(i\right)\right|=r_{i}$
($1\leq i\leq\ell$). We denote by 
\[
\Sym{}_{\varphi}\left(X\right)=\left\{ \sigma\in\Sym\left(X\right)\,\middle|\,\varphi\circ\sigma=\varphi\right\} 
\]
the set of permutations $\sigma\in\Sym\left(X\right)$ subordinate
to the partition of $X$ induced by the fibers of $\varphi$, i.e.,
such that $\varphi\left(\sigma\left(x\right)\right)=\varphi\left(x\right)$
for all $x\in X$. We define 
\[
\left[X\right]_{j}^{\varphi}=\left|\left\{ \sigma\in\Sym{}_{\varphi}\left(X\right)\,:\,\left\Vert \sigma\right\Vert =j\right\} \right|,
\]
the number of $\varphi$-subordinate permutations with $\left\Vert \sigma\right\Vert =j$.
Put differently, $\left[X\right]_{j}^{\varphi}$ counts the permutations
counted in $\left[\vphantom{\big|}\left|X\right|\vphantom{\big|}\right]_{j}$
which satisfy, in addition, that every cycle consists of a subset
of some fiber of $\varphi$. With this new notation, one can write:
\[
\left(n\right)_{r_{1}}\left(n\right)_{r_{2}}\ldots\left(n\right)_{r_{\ell}}=\prod_{i=1}^{l}\left(n^{r_{i}}\sum_{m=0}^{r_{i}-1}\left(-1\right)^{m}\left[r_{i}\right]_{m}n^{-m}\right)=n^{\left|X\right|}\sum_{j=0}^{\left|X\right|}\left(-1\right)^{j}\left[X\right]_{j}^{\varphi}n^{-j}
\]

Turning back to \eqref{L-formula}, we let $V_{M}$ and $E_{M}$ denote
the sets of vertices and edges, respectively, of $\G_{X}\left(M\right)$.
We denote by $\eta$ the morphism $\eta_{M\rightarrow J}^{X}$, and
use it implicitly also for its restrictions to $V_{M}$ and $E_{M}$,
which should cause no confusion. We obtain 
\[
L_{M,J}^{X}\left(n\right)=\frac{n^{|V_{M}|}\sum\limits _{j=0}^{|V_{M}|}\left(-1\right)^{j}\left[V_{M}\right]_{j}^{\eta}n^{-j}}{n^{|E_{M}|}\sum\limits _{j=0}^{|E_{M}|}\left(-1\right)^{j}\left[E_{M}\right]_{j}^{\eta}n^{-j}},
\]
which by Claim \claref{core-graphs-properties}\enuref{euler} equals
\begin{equation}
L_{M,J}^{X}\left(n\right)=n^{-\rrk\left(M\right)}\frac{\sum\limits _{j=0}^{|V_{M}|}\left(-1\right)^{j}\left[V_{M}\right]_{j}^{\eta}n^{-j}}{\sum\limits _{j=0}^{|E_{M}|}\left(-1\right)^{j}\left[E_{M}\right]_{j}^{\eta}n^{-j}}.\label{eq:L-half-way}
\end{equation}
Consider the denominator of \eqref{L-half-way} as a power series
$Q\left(n^{-1}\right)$. Its free coefficient is $\left[E_{M}\right]_{0}^{\eta}=1$.
This makes it relatively easy to get a formula for its inverse $1/Q\left(n^{-1}\right)$
as a power series. In general, if $Q\left(x\right)=1+\sum_{i=1}^{\infty}a_{i}x^{i}$,
then 
\begin{align*}
\frac{1}{Q(x)} & =\frac{1}{1-\sum_{i=1}^{\infty}(-a_{i})x^{i}}=\sum_{t=0}^{\infty}\left(\sum_{i=1}^{\infty}(-a_{i})x^{i}\right)^{t}=\\
 & =\sum_{t=0}^{\infty}\sum_{j_{1},j_{2},\ldots,j_{t}\ge1}(-1)^{t}a_{j_{1}}\cdot\ldots\cdot a_{j_{t}}x^{\sum_{i=1}^{t}j_{i}}.
\end{align*}
In the denominator of \eqref{L-half-way} we have $a_{i}=\left(-1\right)^{i}\left[E_{M}\right]_{i}^{\eta}$,
and the resulting expression needs to be multiplied with the numerator
$\sum_{j=0}^{|V_{M}|}\left(-1\right)^{j}\left[V_{M}\right]_{j}^{\eta}n^{-j}$.
In total, we obtain 
\begin{multline}
\begin{aligned}L_{M,J}^{X}\left(n\right) & =\\
 & \sum_{t=0}^{\infty}\sum_{{j_{0}\ge0\atop j_{1},\ldots,j_{t}\ge1}}\left(-1\right)^{t+\sum_{i=0}^{t}j_{i}}\left[V_{M}\right]_{j_{0}}^{\eta}\cdot\left[E_{M}\right]_{j_{1}}^{\eta}\cdot\ldots\cdot\left[E_{M}\right]_{j_{t}}^{\eta}n^{-\rrk\left(M\right)-\sum_{i=0}^{t}j_{i}}.
\end{aligned}
\label{eq:huge-sum}
\end{multline}

\subsubsection*{The combinatorial meaning and order of magnitude of $C_{M,N}^{X}$}

The expression \eqref{huge-sum} is a bit complicated, but it presents
$L_{M,J}^{X}\left(n\right)$ as a sum (with coefficients $\pm n^{-s}$)
of terms with a combinatorial interpretation: the term $\left[V_{M}\right]_{j_{0}}^{\eta}\cdot\left[E_{M}\right]_{j_{1}}^{\eta}\cdot\ldots\cdot\left[E_{M}\right]_{j_{t}}^{\eta}$
counts $\left(t+1\right)$-tuples of $\eta$-subordinate permutations.
The crux of the matter is that this interpretation allows us to attribute
each tuple to a specific subgroup $N\in\XC{M}{J}$. This is done as
follows.

Let $\left(\sigma_{0},\sigma_{1},\ldots,\sigma_{t}\right)$ be a $\left(t+1\right)$-tuple
of permutations such that $\sigma_{0}\in\mathrm{\Sym}_{\,\eta}\left(V_{M}\right)$
and $\sigma_{1},\ldots,\sigma_{t}\in\mathrm{\Sym}_{\,\eta}\left(E_{M}\right)\backslash\left\{ \mathrm{id}\right\} $
(we exclude $\mathrm{id}\in\Sym\left(E_{M}\right)$, which is the
only permutation counted in $\left[E_{M}\right]_{0}^{\eta}$). Consider
the graph $\Gamma=\nicefrac{\Gamma_{X}\left(M\right)}{\left\langle \sigma_{0},\ldots,\sigma_{t}\right\rangle }$,
which is the quotient of $\Gamma_{X}\left(M\right)$ by all identifications
of pairs of the form $v,\sigma_{0}\left(v\right)$ ($v\in V_{M}$)
and $e,\sigma_{i}\left(e\right)$ ($e\in E_{M}$, $1\leq i\leq t$)%
\footnote{For the definition of the quotient of a graph by identifications of
vertices see the discussion preceding Figure \ref{fig:quotient-graph}.
Although we did not deal with merging of edges before, this is very
similar to merging vertices. Identifying a pair of edges means identifying
the pair of origins, the pair of termini and the pair of edges. In
terms of the generated core graph (see Section \ref{sec:Core-Graphs}),
identifying a pair of edges is equivalent to identifying the pair
of origins and\textbackslash{}or the pair of termini.%
}. Since $\Gamma$ is obtained from $\Gamma_{X}\left(M\right)$ by
identification of elements with the same $\eta$-image, $\eta$ induces
a well defined morphism $\Gamma\rightarrow\Gamma_{X}\left(J\right)$.
Thus, every closed path in $\Gamma$ projects to a path in $\Gamma_{X}\left(J\right)$,
giving $\pi_{1}^{X}\left(\Gamma\right)\leq\pi_{1}^{X}\left(\Gamma_{X}\left(J\right)\right)=J$.
We denote $N=N_{\sigma_{0},\sigma_{1},\ldots,\sigma_{t}}=\pi_{1}^{X}\left(\Gamma\right)$.
As usual (see Figures \figref{folding_process}, \figref{quotient-graph}),
we can perform Stallings foldings on $\Gamma$ until we obtain the
core graph corresponding to $N$, $\Gamma_{X}\left(N\right)$. Obviously
we have $M\covers N$, and by Claim \claref{cover-properties}\enuref{Cover-insideCover}
also $N\covers J$. Thus, we always have $N=N_{\sigma_{0},\sigma_{1},\ldots,\sigma_{t}}\in\XC{M}{J}$.
To summarize the situation:
\begin{equation}
\xymatrix@C=35pt{\Gamma_{X}\left(M\right)\ar@{->>}[r]\ar@/_{1pc}/@{->>}[rr]_{\eta_{M\rightarrow N}^{X}} & \Gamma=\nicefrac{\Gamma_{X}\left(M\right)}{\left\langle \sigma_{0},\ldots,\sigma_{1}\right\rangle }\ar@{->>}[r]^{\qquad{\scriptscriptstyle folding}} & \Gamma_{X}\left(N\right)\ar@{->>}[r]^{\eta_{N\rightarrow J}^{X}} & \Gamma_{X}\left(J\right)}
\label{eq:quotient_by_tuple}
\end{equation}

Our next move is to rearrange \eqref{huge-sum} according to the intermediate
subgroups $N\in\XC{M}{J}$ which correspond to the tuples counted
in it. For any $N\in\XC{M}{J}$ we denote by $\mathcal{T}_{M,N,J}^{X}$
the set of tuples $\left(\sigma_{0},\sigma_{1},\ldots,\sigma_{t}\right)$
such that $N_{\sigma_{0},\sigma_{1},\ldots,\sigma_{t}}=N$, i.e.
\begin{align*}
\mathcal{T}_{M,N,J}^{X} & =\left\{ \left(\sigma_{0},\sigma_{1},\ldots,\sigma_{t}\right)\,\middle|\,\begin{matrix}t\in\mathbb{N},\:\sigma_{0}\in\mathrm{\Sym}_{\,\eta}\left(V_{M}\right)\\
\sigma_{1},\ldots,\sigma_{t}\in\Sym_{\,\eta}\left(E_{M}\right)\backslash\left\{ \mathrm{id}\right\} \vphantom{\Big|}\\
\pi_{1}^{X}\left(\nicefrac{\Gamma_{X}\left(M\right)}{\left\langle \sigma_{0},\sigma_{1},\ldots,\sigma_{t}\right\rangle }\right)=N
\end{matrix}\right\} .
\end{align*}
The terms in \eqref{huge-sum} which correspond to a fixed $N\in\XC{M}{J}$
thus sum to
\begin{equation}
\widetilde{C}_{M,J}^{X}\left(N\right)=\sum_{\left(\sigma_{0},\sigma_{1},\ldots,\sigma_{t}\right)\in\mathcal{T}_{M,N,J}^{X}}\frac{\:\:\left(-1\right)^{t+\sum\limits _{i=0}^{t}\left\Vert \sigma_{i}\right\Vert }\:\:}{n^{\rrk\left(M\right)+\sum\limits _{i=0}^{t}\left\Vert \sigma_{i}\right\Vert }},\label{eq:C_MN_formula}
\end{equation}
and \eqref{huge-sum} becomes 
\begin{equation}
L_{M,J}^{X}=\sum_{N\in\XC{M}{J}}\widetilde{C}_{M,J}^{X}\left(N\right)\label{eq:D_MJ_N}
\end{equation}
The equation \eqref{D_MJ_N} looks much like \eqref{L=00003Dsum_C},
with $\widetilde{C}_{M,J}^{X}\left(N\right)$ playing the role of
$C_{M,N}^{X}$. In order to establish equality between the latter
two, we must show that $\widetilde{C}_{M,J}^{X}\left(N\right)$ does
not depend on $J$. Fortunately, this is not hard: it turns out that
\begin{equation}
\qquad\widetilde{C}_{M,J}^{X}\left(N\right)=\widetilde{C}_{M,N}^{X}\left(N\right)\qquad\left(\forall N\in\XC{M}{J}\right),\label{eq:C_MJN_equals_C_MNN}
\end{equation}
and the r.h.s.\ is, of course, independent of $J$. This equality
follows from $\mathcal{T}_{M,N,J}^{X}=\mathcal{T}_{M,N,N}^{X}$, which
we now justify. The only appearance $J$ makes in the definition of
$\mathcal{T}_{M,N,J}^{X}$ is inside $\eta=\eta_{M\rightarrow J}^{X}$,
which is to be $\sigma_{i}$-invariant (for $0\leq i\leq n$), i.e.,
$\sigma_{i}$ must satisfy $\eta_{M\rightarrow J}^{X}\circ\sigma_{i}=\eta_{M\rightarrow J}^{X}$.
If $\left(\sigma_{0},\ldots,\sigma_{t}\right)\in\mathcal{T}_{M,N,J}^{X}$
then $\eta_{M\rightarrow N}^{X}\circ\sigma_{i}=\eta_{M\rightarrow N}^{X}$
follows from the fact that $\Gamma_{X}\left(N\right)$ is a quotient
of $\nicefrac{\Gamma_{X}\left(M\right)}{\left\langle \sigma_{i}\right\rangle }$.
On the other hand, if $\left(\sigma_{0},\ldots,\sigma_{t}\right)\in\mathcal{T}_{M,N,N}^{X}$
then we have $\eta_{M\rightarrow N}^{X}\circ\sigma_{i}=\eta_{M\rightarrow N}^{X}$,
hence also (see \eqref{quotient_by_tuple}) 
\[
\eta_{M\rightarrow J}^{X}\circ\sigma_{i}=\eta_{N\rightarrow J}^{X}\circ\eta_{M\rightarrow N}^{X}\circ\sigma_{i}=\eta_{N\rightarrow J}^{X}\circ\eta_{M\rightarrow N}^{X}=\eta_{M\rightarrow J}^{X}.
\]

Writing $\widetilde{C}_{M,N}^{X}\overset{{\scriptscriptstyle def}}{=}\widetilde{C}_{M,N}^{X}\left(N\right)$,
we have by \eqref{L=00003Dsum_C}, \eqref{D_MJ_N}, and \eqref{C_MJN_equals_C_MNN}
\[
C^{X}*\zeta^{X}=L^{X}=\widetilde{C}^{X}*\zeta^{X}
\]
which shows that $C^{X}=\widetilde{C}^{X}$, as desired.

We approach the endgame. Let $\left(\sigma_{0},\sigma_{1},\ldots,\sigma_{t}\right)\in\mathcal{T}_{M,N,J}^{X}=\mathcal{T}_{M,N,N}^{X}$,
and consider the partition $P$ of $V\left(\Gamma_{X}\left(H\right)\right)$,
obtained by identifying $v$ and $v'$ whenever $\sigma_{0}\left(v\right)=v'$,
or $\sigma_{i}\left(e\right)=e'$ for some $1\leq i\leq t$ and edges
$e,e'$ whose origins are $v$ and $v'$, respectively. Since $P$
can clearly be obtained by $\sum_{i=0}^{t}\left\Vert \sigma_{i}\right\Vert $
identifications, we have $\left\Vert P\right\Vert \leq\sum_{i=0}^{t}\left\Vert \sigma_{i}\right\Vert $
(a strong inequality can take place - for example, one can have $\sigma_{1}=\sigma_{2}$).
Since $\left(\sigma_{0},\sigma_{1},\ldots,\sigma_{t}\right)\in\mathcal{T}_{M,N,J}^{X}$
we have $\pi_{1}^{X}\left(\nicefrac{\Gamma_{X}\left(H\right)}{P}\right)=N$,
and thus by \eqref{rho_from_partition} we obtain
\[
\rho_{X}\left(H,J\right)\leq\left\Vert P\right\Vert \leq\sum_{i=0}^{t}\left\Vert \sigma_{i}\right\Vert .
\]
From \eqref{C_MN_formula} (recall that $\widetilde{C}_{M,J}^{X}\left(N\right)=\widetilde{C}_{M,N}^{X}=C_{M,N}^{X}$)
we now have 

\[
C_{M,N}^{X}\left(n\right)=O\left(\frac{1}{n^{\rrk(M)+\rho_{X}(M,N)}}\right),
\]
and Proposition \propref{order-magnitude-C} is proven.

\section{\label{sec:Profinite}Primitive words in the profinite topology}

Theorem \thmref{mp_is_prim} has some interesting implications to
the study of profinite groups. In fact, some of the original interest
in the conjecture that is proven in this paper stems from these implications.

Let $\hF$ denote the profinite completion of the free group $\F_{k}$.
A basis of $\hF$ is a set $S\subset\hF$ such that every map from
$S$ to a profinite group $G$ admits a unique extension to a continuous
homomorphism $\hF\to G$. It is a standard fact that $\F_{k}$ is
embedded in $\hF$, and that every basis of $\F_{k}$ is also a basis
of $\hF$ (see for example \cite{Wil98}). An element of $\hF$ is
called \emph{primitive }if it belongs to a basis of $\hF$. 

It is natural to ask whether an element of $\F_{k}$, which is primitive
in $\hF$, is already primitive in $\F_{k}$. In fact, this was conjectured
by Gelander and by Lubotzky, independently. Theorem \thmref{mp_is_prim}
yields a positive answer, as follows. An element $w\in\hF$ is said
to be \emph{measure preserving} if for any finite group $G$, and
a uniformly distributed random (continuous) homomorphism $\hat{\alpha}_{G}\in\Hom_{cont}\left(\hF,G\right)$,
the image $\hat{\alpha}_{G}\left(w\right)$ is uniformly distributed
in $G$. By the natural correspondence $\Hom_{cont}\left(\hF,G\right)\cong\Hom\left(\F_{k},G\right)$,
an element of $\F_{k}$ is measure preserving w.r.t.\ $\F_{k}$ iff
it is so w.r.t.\ $\hF$. As in $\F_{k}$, a primitive element of
$\hF$ is easily seen to be measure preserving. Theorem \thmref{mp_is_prim}
therefore implies that if $w\in\F_{k}$ is primitive in $\hF$, then
it is also primitive in $\F_{k}$. In other words:
\begin{cor}
\label{cor:prim_in_F_k_iff_prim_in_hF_k} Let $P$ denote the set
of primitive elements of $\F_{k}$, and let $\widehat{P}$ denote
the set of primitive elements of $\hF.$ Then 
\[
P=\widehat{P}\cap\F_{k}.
\]

\end{cor}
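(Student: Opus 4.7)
The plan is to prove the two inclusions of $P = \widehat{P} \cap \F_{k}$ separately. The forward inclusion $P \subseteq \widehat{P} \cap \F_{k}$ is immediate from the fact, cited in the passage, that every basis of $\F_{k}$ remains a basis of $\hF$: if $w$ lies in some basis of $\F_{k}$, then $w$ lies in a basis of $\hF$ as well, and of course $w \in \F_{k}$.

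For the reverse inclusion, the strategy is to funnel primitivity in $\hF$ through measure preservation, and then invoke Theorem \thmref{mp_is_prim}. Let $w \in \widehat{P} \cap \F_{k}$. First I would verify that primitivity in $\hF$ implies measure preservation in $\hF$: if $w$ belongs to some basis $\widehat{S}$ of $\hF$, then by the defining universal property of a basis, a uniformly distributed continuous homomorphism $\hat{\alpha}_{G} \in \Hom_{cont}(\hF, G)$ (for finite $G$) corresponds to independent uniform choices of the images of the elements of $\widehat{S}$, and since $w$ is one of these elements, $\hat{\alpha}_{G}(w)$ is uniformly distributed in $G$. The next step is to transport this to $\F_{k}$ via the canonical bijection $\Hom_{cont}(\hF, G) \cong \Hom(\F_{k}, G)$, which is valid for any finite $G$ because every homomorphism from $\F_{k}$ to a finite group factors uniquely through $\hF$. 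This bijection of finite sets carries uniform measure to uniform measure, so under the identification $\hat{\alpha}_{G}$ corresponds to a uniform $\alpha_{G} \in \Hom(\F_{k}, G)$ with $\hat{\alpha}_{G}(w) = \alpha_{G}(w)$. Hence $w$ is measure preserving as an element of $\F_{k}$ in the sense of Definition \defref{mp_subgroup} (applied to $\la w \ra$), and Theorem \thmref{mp_is_prim} then concludes that $w \in P$.

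There is essentially no obstacle here: once Theorem \thmref{mp_is_prim} is in hand, the corollary reduces to unpacking definitions and the routine check that $\Hom_{cont}(\hF, G) \cong \Hom(\F_{k}, G)$ is a measure-preserving bijection. The only mild subtlety worth flagging is that ``measure preserving in $\hF$'' is a priori a statement about all finite quotients of $\hF$, but this collapses to the corresponding statement for $\F_{k}$ since $\F_{k}$ and $\hF$ have the same finite quotients, so no extra information is gained by working in $\hF$.
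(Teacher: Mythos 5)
Your proof is correct and follows essentially the same route as the paper: the forward inclusion from the standard fact that a basis of $\F_{k}$ remains a basis of $\hF$, and the reverse inclusion by transporting primitivity in $\hF$ to measure preservation in $\hF$, then to measure preservation in $\F_{k}$ via the bijection $\Hom_{cont}(\hF,G)\cong\Hom(\F_{k},G)$, and finally invoking Theorem \thmref{mp_is_prim}. Your write-up is somewhat more explicit than the paper's (which states the steps tersely), but the argument is the same.
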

As $\widehat{P}$ is a closed set in $\hF$, this immediately implies
Corollary \corref{prim_are_closed}, which states that $P$ is closed
in the profinite topology. In fact, there is also a direct proof to
Corollary \corref{prim_are_closed} from Theorem \thmref{avg_fixed_points}:
one has to find, for every non-primitive word $w\in\F_{k}$, some
$H\leq_{\mathrm{f.i.}}\F_{k}$ such that the coset $wH$ contains
no primitives. By Theorem \thmref{avg_fixed_points} there exists
$n$ so that $w$ does not induce uniform distribution on $S_{n}$.
For this $n$, let 
\[
H=\bigcap_{\alpha:\F_{k}\to S_{n}}\ker\alpha
\]
and then $wH$ is a primitive-free coset (as all words in the same
coset of $H$ induce the exact same measure on $S_{n}$).

\medskip{}

This circle of ideas has a natural generalization. Observe the following
five equivalence relations on the elements of $\F_{k}$:
\begin{itemize}
\item $w_{1}\overset{A}{\sim}w_{2}$ if $w_{1}$ and $w_{2}$ belong to
the same $\Aut\F_{k}$-orbit.
\item $w_{1}\overset{B}{\sim}w_{2}$ if $w_{1}$ and $w_{2}$ belong to
the same $\overline{\Aut\F_{k}}$-orbit (where $\overline{\Aut\F_{k}}$
is the closure of $\Aut\F_{k}$ in $\Aut\hF$).
\item $w_{1}\overset{C}{\sim}w_{2}$ if $w_{1}$ and $w_{2}$ belong to
the same $\Aut\hF$-orbit.
\item $w_{1}\overset{C'}{\sim}w_{2}$ if $w_{1}$ and $w_{2}$ have the
same ``statistical'' properties, namely if they induce the same
distribution on any finite group.
\item $w_{1}\overset{C''}{\sim}w_{2}$ if the evaluation maps $ev_{w_{1}},ev_{w_{2}}:\mathrm{Epi}\left(F_{k},G\right)\rightarrow G$
have the same images for every finite group $G$.
\end{itemize}
It is not hard to see that $\left(A\right)\Rightarrow\left(B\right)\Rightarrow\left(C\right)\Rightarrow\left(C'\right)\Rightarrow\left(C''\right)$
(namely, that if $w_{1}\overset{A}{\sim}w_{2}$ then $w_{1}\overset{B}{\sim}w_{2}$,
and so on). The only nontrivial implication is $\left(C'\right)\Rightarrow\left(C''\right)$,
which can be shown by induction on the size of $G$. In an unpublished
manuscript, C.\ Meiri gave a one-page proof that $\left(C\right)$,
$\left(C'\right)$ and $\left(C''\right)$ in fact coincide (in fact,
these three coincide for all elements of $\hF$). 

From this perspective, our main result shows that in the case that
$w_{1}$ is primitive, all five relations coincide, and it is natural
to conjecture that they in fact coincide for all elements in $\F_{k}$%
\footnote{In \cite{AV10}, for example, the authors indeed ask whether $\left(C'\right)\Rightarrow\left(A\right)$. %
}. Showing that $\left(A\right)\Leftarrow\left(B\right)$ would imply
that $\Aut\F_{k}$-orbits in $\F_{k}$ are closed in the profinite
topology, and the stronger statement $\left(A\right)\Leftarrow\left(C\right)$
would imply that words which lie in different $\Aut\F_{k}$-orbits
can be told apart using statistical methods. 

The analysis which is carried out in this paper does not suffice for
the general case. For example, consider the words $w_{1}=x_{1}x_{2}x_{1}x_{2}^{\,-1}$
and $w_{2}=x_{1}x_{2}x_{1}^{\,-1}x_{2}^{\,-1}$. They belong to different
$\Aut\F_{2}$-orbits, as $w_{2}\in\F_{2}'$ but $w_{1}\notin\F_{2}'$,
but induce the same distribution on $S_{n}$ for every $n$: their
images under a random homomorphism are a product of a random permutation
($\sigma$) and a random element in its conjugacy class ($\tau\sigma\tau^{-1}$
for $w_{1}$, and $\tau\sigma^{-1}\tau^{-1}$$ $ for $w_{2}$). However,
while $S_{n}$ do not distinguish between these two words, other groups
do (in fact these words induce the same distribution on $G$ precisely
when every element in $G$ is conjugate to its inverse, see \cite{parzanchevski2012fourier}
for a discussion of this).

\medskip{}

These questions also play a role in the theory of decidability in
infinite groups. A natural extension of the word-problem and the conjugacy-problem,
is the following \emph{automorphism-problem}: given a group $G$ generated
by $S$, and two words $w_{1},w_{2}\in F\left(S\right)$, can it be
decided whether $w_{1}$ and $w_{2}$ belong to the same $\Aut G$-orbit
in $G$? Whitehead's algorithm \cite{Whi36a,Whi36b} gives a concrete
solution when $G=\F_{k}$. Showing that $\left(A\right)\Leftarrow\left(B\right)$
would provide an alternative decision procedure for $\F_{k}$.

More generally, and in a similar fashion to the conjugacy problem,
it can be shown that if
\begin{enumerate}
\item $G$ is finitely presented
\item $\Aut G$ is finitely generated
\item $\Aut G$-orbits are closed in the profinite topology 
\end{enumerate}
then the automorphism-problem in $G$ is decidable. For the free group
$\left(1\right)$ and $\left(2\right)$ are known, and $\left(3\right)$
is exactly the conjectured coincidence $\left(A\right)\Leftrightarrow\left(B\right)$.

\section{Open questions}

We mention some open problems that naturally arise from the discussion
in this paper. 
\begin{itemize}
\item Section \ref{sec:Profinite} shows how the questions about primitive
elements can be extended to all $\Aut\F_{k}$-orbits in $\F_{k}$
(is it true that $\left(A\right)\Leftrightarrow\left(B\right)$, and
even the stronger equivalence $\left(A\right)\Leftrightarrow\left(C\right)$?).
More generally, can statistical properties tell apart two subgroups
$H_{1},H_{2}\fg\F_{k}$ which belong to distinct $\Aut\F_{k}$-orbits?
This would be a further generalization of Theorem \thmref{mp_is_prim}. 
\item It is also interesting to consider words which are measure preserving
w.r.t.\ other types of groups. For instance, does Theorem \thmref{mp_is_prim}
still hold if we replace ``finite groups'' by ``compact Lie groups'',
and study Haar-measure preserving words? Is there a single compact
Lie group which suffices? Within finite groups, we showed that measure
preservation w.r.t.\ $S_{n}$ implies primitivity. Is it still true
if we replace $S_{n}$ by some other infinite family of finite groups
(e.g.\ $\mathrm{PSL}_{n}\left(q\right)$, or solvable groups)?
\item Is it true that 
\[
\left[H,\infty\right)_{\leq}=\bigcup_{{X\mathrm{\, is\, a}\atop \mathrm{basis\, of\,}\F_{k}}}\XF{H}
\]
and under which assumptions does the following hold
\[
\AE{H}=\bigcap_{{X\mathrm{\, is\, a}\atop \mathrm{basis\, of\,}\F_{k}}}\XF{H}
\]
(see Remark \ref{rem:alg-cover-inc})?
\item The distribution induced by $w$ on a finite group $G$ is a class
function, and so is a linear combination of the characters of $G$
(for more on this point of view e.g.\ \cite{AV10,parzanchevski2012fourier}).
In particular, $\Phi_{\left\langle w\right\rangle ,\mathbf{F}_{k}}\left(n\right)-1$
is the coefficient of the \emph{standard} character of $S_{n}$. The
first nonzero term of $\Phi_{\left\langle w\right\rangle ,\mathbf{F}_{k}}-1$
encodes the primitivity rank and number of critical subgroups of $w$.
Can the next terms be given an algebraic interpretation, and can they
be estimated? (Such an estimation may contribute further to the study
of expansion in graphs, which started in \cite{Pud14b}.) What about
the coefficients of other characters of $S_{n}$ or of any other (family
of) groups?
\end{itemize}

\section*{Acknowledgments}

It is a pleasure to thank our advisors Nati Linial and Alex Lubotzky
for their support, encouragement and useful comments. We are also
grateful to Aner Shalev for supporting this research and for his valuable
suggestions. We would also like to thank Uri Bader, Tsachik Gelander,
Chen Meiri, Paul Nelson and Iddo Samet for their beneficial comments.
We have benefited much from the mathematical open source community,
and in particular from GAP \cite{GAP4}, and its free group algorithms
package \cite{FGA}.

\section*{Glossary}

\begin{center}
\begin{tabular}{|>{\centering}m{0.13\columnwidth}|>{\centering}m{0.32\columnwidth}|>{\centering}m{0.18\columnwidth}|>{\centering}m{0.3\columnwidth}|}
\hline 
 &  & Reference & Remarks\tabularnewline[\doublerulesep]
\hline 
$H\fg\F_{k}$ & finitely generated &  & \tabularnewline[\doublerulesep]
\hline 
$H\ff J$ & free factor &  & \tabularnewline[\doublerulesep]
\hline 
$H\leq_{alg}J$ & algebraic extension & Definition \defref{algebraic-extension} & \tabularnewline[\doublerulesep]
\hline 
$H\covers J$ & $H$ $X$-covers $J$ & Definition \defref{quotient} & $H\overset{X}{\twoheadrightarrow}J$ in \cite{Pud14a}\tabularnewline[\doublerulesep]
\hline 
$\mathfrak{sub}_{f\! g}\left(\mathbf{F}_{k}\right)$ & the set of finitely generated subgroups of $\F_{k}$ &  & \tabularnewline[\doublerulesep]
\hline 
$\left[H,J\right]_{\preceq}$ & $\left\{ L\,\middle|\, H\preceq L\preceq J\right\} $ &  & \multirow{3}{0.3\columnwidth}{\centering{}$\preceq$ is either one of $\leq,\ff,\leq_{alg}$ or
$_{\Xcov}$ (standing for $\covers$ )}\tabularnewline[\doublerulesep]
\cline{1-3} 
$\left[H,J\right)_{\preceq}$ & $\left\{ L\,\middle|\, H\preceq L\precneqq J\right\} $ &  & \tabularnewline[\doublerulesep]
\cline{1-3} 
$\left[H,\infty\right)_{\preceq}$ & $\left\{ L\,\middle|\, H\preceq L\right\} $ &  & \tabularnewline[\doublerulesep]
\hline 
$\XF{H}$ & the $X$-quotients of $H$ &  & $\mathcal{O}_{X}\left(H\right)$, or $X$-frigne in \cite{MVW07}\tabularnewline[\doublerulesep]
\hline 
$\AE{H}$ & algebraic extensions of $H$ &  & $AE\left(H\right)$ in \cite{MVW07}\tabularnewline[\doublerulesep]
\hline 
$\pi\left(H\right)$ & primitivity rank of $H$ & \multirow{2}{0.18\columnwidth}{Definition \defref{prim_rank}} & $\rp\left(H\right)=\pi\left(H\right)-1$\tabularnewline[\doublerulesep]
\cline{1-2} \cline{4-4} 
$\mathrm{Crit}\left(H\right)$ & $H$-critical groups &  & \tabularnewline[\doublerulesep]
\hline 
$\Gamma_{X}\left(H\right)$ & $X$-labeled core graph of $H$ &  & \tabularnewline[\doublerulesep]
\hline 
$\rho_{X}\left(H,J\right)$ & $X$-distance & Definition \defref{distance} & $H\covers J$\tabularnewline[\doublerulesep]
\hline 
$\eta_{H\rightarrow J}^{X}$  & the morphism $\Gamma_{X}\left(H\right)\rightarrow\Gamma_{X}\left(J\right)$ & Claim \claref{morphism-properties} & $H\leq J$\tabularnewline[\doublerulesep]
\hline 
$\alpha_{J,n}$ & a uniformly chosen random homomorphism in $\mathrm{Hom}\left(J,S_{n}\right)$ &  & $J\fg\F_{k}$\tabularnewline[\doublerulesep]
\hline 
$\Phi_{H,J}\left(n\right)$ & the expected number of common fixed points of $\alpha_{J,n}\left(H\right)$ & (\ref{eq:Phi}) & $H\le J$\tabularnewline[\doublerulesep]
\hline 
\end{tabular}
\par\end{center}

\bibliographystyle{amsalpha}
\bibliography{PrimitiveBib,pudra}

\providecommand{\bysame}{\leavevmode\hbox to3em{\hrulefill}\thinspace}
\providecommand{\MR}{\relax\ifhmode\unskip\space\fi MR }
\providecommand{\MRhref}[2]{%
  \href{http://www.ams.org/mathscinet-getitem?mr=#1}{#2}
}
\providecommand{\href}[2]{#2}
\begin{thebibliography}{MVW07}

\bibitem[Abe06]{Abe06}
M.~Abert, \emph{On the probability of satisfying a word in a group}, Journal of
  Group Theory \textbf{9} (2006), 685--694.

\bibitem[AL02]{AL02}
A.~Amit and N.~Linial, \emph{Random graph coverings {I}: General theory and
  graph connectivity}, Combinatorica \textbf{22} (2002), no.~1, 1--18.

\bibitem[Alo86]{Alo86}
N.~Alon, \emph{Eigenvalues and expanders}, Combinatorica \textbf{6} (1986),
  no.~2, 83--96.

\bibitem[AV11]{AV10}
A.~Amit and U.~Vishne, \emph{Characters and solutions to equations in finite
  groups}, Journal of Algebra and Its Applications \textbf{10} (2011), no.~4,
  675--686.

\bibitem[BK13]{BK12}
T.~Bandman and B.~Kunyavskii, \emph{Criteria for equidistribution of solutions
  of word equations on ${SL}(2)$}, Journal of Algebra \textbf{382} (2013),
  282--302.

\bibitem[BS87]{BS87}
A.~Broder and E.~Shamir, \emph{On the second eigenvalue of random regular
  graphs}, Foundations of Computer Science, 1987., 28th Annual Symposium on,
  IEEE, 1987, pp.~286--294.

\bibitem[Fri03]{Fri03}
J.~Friedman, \emph{Relative expanders or weakly relatively {R}amanujan graphs},
  Duke Mathematical Journal \textbf{118} (2003), no.~1, 19--35.

\bibitem[Fri08]{Fri08}
\bysame, \emph{A proof of {A}lon's second eigenvalue conjecture and related
  problems}, vol. 195, Memoirs of the AMS, no. 910, AMS, september 2008.

\bibitem[GAP13]{GAP4}
The GAP~Group, \emph{{GAP -- Groups, Algorithms, and Programming, Version
  4.6.5}}, 2013.

\bibitem[GS09]{GSh09}
S.~Garion and A.~Shalev, \emph{Commutator maps, measure preservation, and
  {T}-systems}, Trans. Amer. Math. Soc. \textbf{361} (2009), no.~9, 4631--4651.

\bibitem[KM02]{KM02}
I.~Kapovich and A.~Myasnikov, \emph{Stallings foldings and subgroups of free
  groups}, Journal of Algebra \textbf{248} (2002), no.~2, 608--668.

\bibitem[LP10]{LP10}
Nati Linial and Doron Puder, \emph{Words maps and spectra of random graph
  lifts}, Random Structures and Algorithms \textbf{37} (2010), no.~1, 100--135.

\bibitem[LS08]{LSh08}
M.~Larsen and A.~Shalev, \emph{Characters of symmetric groups: sharp bounds and
  applications}, Inventiones mathematicae \textbf{174} (2008), no.~3, 645--687.

\bibitem[LS09]{LSh09}
\bysame, \emph{Words maps and {W}aring type problems}, J. Amer. Math. Soc.
  \textbf{22} (2009), no.~2, 437--466.

\bibitem[MVW07]{MVW07}
A.~Miasnikov, E.~Ventura, and P.~Weil, \emph{Algebraic extensions in free
  groups}, Geometric group theory (G.N. Arzhantseva, L.~Bartholdi, J.~Burillo,
  and E.~Ventura, eds.), Trends Math., Birkhauser, 2007, pp.~225--253.

\bibitem[Nic94]{Nic94}
A.~Nica, \emph{On the number of cycles of given length of a free word in
  several random permutations}, Random Structures and Algorithms \textbf{5}
  (1994), no.~5, 703--730.

\bibitem[PP14]{PP14}
Ori Parzanchevski and Doron Puder, \emph{Stallings graphs, algebraic extensions
  and primitive elements in ${F}_2$}, Mathematical Proceedings of the Cambridge
  Philosophical Society \textbf{157} (2014), no.~1, 1--11.

\bibitem[PS14]{parzanchevski2012fourier}
O.~Parzanchevski and G.~Schul, \emph{On the {F}ourier expansion of word maps},
  Bull. London Math. Soc. \textbf{46} (2014), no.~1, 91--102.

\bibitem[Pud14a]{Pud14b}
Doron Puder, \emph{Expansion of random graphs: New proofs, new results}, arXiv
  preprint arXiv:1212.5216 (2014+).

\bibitem[Pud14b]{Pud14a}
\bysame, \emph{Primitive words, free factors and measure preservation}, Israel
  Journal of Mathematics \textbf{201} (2014), no.~1, 25--73.

\bibitem[Seg09]{Seg09}
D.~Segal, \emph{Words: notes on verbal width in groups}, London Mathematical
  Society, Lecture note Series 361, Cambridge University Press, Cambridge,
  2009.

\bibitem[Sha09]{Sha09}
A.~Shalev, \emph{Words maps, conjugacy classes, and a non-commutative
  {W}aring-type theorem}, Annals of Math. \textbf{170} (2009), 1383--1416.

\bibitem[Sha13]{Shalev2013}
\bysame, \emph{Some results and problems in the theory of word maps},
  Erd{\H{o}}s Centennial (Bolyai Society Mathematical Studies) (L.~Lov{\'a}sz,
  I.~Ruzsa, V.T. S{\'o}s, and D.~Palvolgyi, eds.), Springer, 2013.

\bibitem[Sie12]{FGA}
C.~Sievers, \emph{{Free Group Algorithms -- a GAP package, Version 1.2.0}},
  2012.

\bibitem[Sta83]{Sta83}
J.R. Stallings, \emph{Topology of finite graphs}, Inventiones mathematicae
  \textbf{71} (1983), no.~3, 551--565.

\bibitem[Sta97]{Stan97}
R.P. Stanley, \emph{Enumerative combinatorics}, vol.~1, Cambridge Studies in
  Advanced Mathematics, no.~49, Cambridge University Press, Cambridge, 1997.

\bibitem[Tak51]{takahasi1951note}
M.~Takahasi, \emph{Note on chain conditions in free groups}, Osaka Math. J
  \textbf{3} (1951), no.~2, 221--225.

\bibitem[Tur96]{turner1996test}
E.C. Turner, \emph{Test words for automorphisms of free groups}, Bulletin of
  the London Mathematical Society \textbf{28} (1996), no.~3, 255--263.

\bibitem[vLW01]{vLW01}
J.H. van Lint and R.M. Wilson, \emph{A course in combinatorics}, Cambridge Univ
  Pr, 2001.

\bibitem[Whi36a]{Whi36a}
J.H.C. Whitehead, \emph{On certain sets of elements in a free group}, Proc.
  London Math. Soc. \textbf{41} (1936), 48--56.

\bibitem[Whi36b]{Whi36b}
\bysame, \emph{On equivalent sets of elements in a free group}, Ann. of Math.
  \textbf{37} (1936), 768--800.

\bibitem[Wil98]{Wil98}
J.S. Wilson, \emph{Profinite groups}, Clarendon Press, Oxford, 1998.

\end{thebibliography}

\end{document}